\newcommand{\R}{\mathbb{R}}
\newcommand{\Q}{\mathbb{Q}}
\newcommand{\C}{\mathbb{C}}
\newcommand{\Z}{\mathbb{Z}}
\newcommand{\disp}{\displaystyle}
\newcommand{\ba}{\begin{array}}
\newcommand{\ea}{\end{array}}
\newcommand{\be}{\begin{equation}}
\newcommand{\ee}{\end{equation}}
\theoremstyle{plain}
\newtheorem{theorem}{Theorem}
\newtheorem{lemma}[theorem]{Lemma}
\newtheorem{prop}[theorem]{Proposition}
\newtheorem{coro}[theorem]{Corollary}
\theoremstyle{remark}
\newtheorem{remark}{Remark}
\theoremstyle{definition}
\begin{document}


\title{Real tight contact structures on lens spaces and surface singularities}

\author{S\.{\i}nem Onaran}
\address{Hacettepe \"{U}n\.{ı}vers\.{ı}tes\.{ı}, Department of Mathematics, TR-06800 Beytepe-Ankara, Turkey}
\email{sonaran@hacettepe.edu.tr}

\author{Fer\.{\i}t \"{O}zt\"{u}rk}
\address{Bo\u{g}az\.{ı}\c{c}\.{ı} \"{U}n\.{ı}vers\.{ı}tes\.{ı}, Department of Mathematics, TR-34342
  Bebek, \.Istanbul, Turkey}
\email{ferit.ozturk@boun.edu.tr}

\subjclass[2020]{Primary 57K33, 32S25; Secondary 57M50}

\begin{abstract}
We classify the real tight contact structures on solid tori up to equivariant contact isotopy and apply the results to the classification of real tight structures on  $S^3$ and real lens spaces $L(p,\pm 1)$. We prove that there is a unique real tight $S^3$ and $\R P^3$. We show there is at most one real tight  $L(p,\pm 1)$ with respect to one of its two possible real structures. With respect to the other we give lower and upper bounds for the count. To establish lower bounds we explicitly construct real tight manifolds through equivariant contact surgery, real open book decompositions and isolated real algebraic surface singularities.
As a by-product we observe the existence of an invariant torus in an $L(p,p-1)$
which cannot be made convex equivariantly.

\end{abstract}

\date{\today}

\maketitle

\section{Introduction}
A real structure on an oriented 3- or 4-manifold is an orientation preserving involution.
A real contact 3-manifold is a closed real 3-manifold where the contact structure is anti-symmetric with respect to the real structure. 
They naturally arise as the link manifolds of complex algebraic isolated surface singularities given by zero loci of polynomials with real coefficients (in this note we term those singularities as real algebraic surface singularities);
there the real structure is the complex conjugation and the contact structure is the unique tight one determined by complex tangencies. 
Without reference to the real structure, such a singularity is usually called a Milnor filling of the given contact 3-manifold. So in the real setting we may take the liberty to call the filling a real Milnor filling.
More generally real contact 3-manifolds appear as the boundary of real symplectic 4-manifolds in the sense of \cite{vi}.

It is known that the 3-manifolds that admit a real structure is scarce.  ``Most manifolds have little symmetry''; more precisely there is a natural notion of density in the set of all closed orientable 3-manifolds defined via the first cohomology and this density vanishes for the subset of those manifolds that admit a non-trivial involution  \cite{pu}. Nevertheless once a 3-manifold admits a real structure $c$ then it admits a $c$-real contact structure \cite{ose}.

Having noted these, a possible path of investigation is real Milnor fillability, 
i.e. the fillability of real tight contact 3-manifolds by 
isolated real algebraic surface singularities, and more generally the fillability by real symplectic 4-manifolds. Related to the latter, it has been observed that  
there are real open books (hence real contact structures) on real 3-manifolds  which cannot be filled by a real Lefschetz fibration, although it is filled by non-real Lefschetz fibrations \cite{os3}.

We started this work with an interest in realizing real tight lens spaces as the link manifolds of real algebraic surface singularities, in cases where there is no contact topological obstruction.  Recall that up to contactomorphism there is at most one tight contact structure on a closed oriented 3-manifold which is Milnor fillable \cite{cnp}. Keeping that in mind, we first attempt a classification of real tight structures over lens spaces up to equivariant contact isotopy. In that regard the present work can be considered as a continuation of  \cite{os} in which we introduced real contact 3-manifolds and showed that there is a unique real tight 3-ball, which was accomplished through a partial classification of real tight solid tori relative to the invariant convex boundary.

Here in order to proceed towards the classification on lens spaces, we first improve the classification on real tight solid tori. We also expect this classification to be useful in studying the classification of real Legendrian knots and hope to return this in a future work.
On a solid torus there are four real structures up to isotopy 
through real structures \cite{ha}. We denote them and their restrictions on the boundary torus by $c_j, j=1,2,3,4$ (see Section~\ref{ilk} for an explicit list). 
Here $c_1$ is the hyperelliptic involution.
It turns out that the $c_2$-, $c_3$- and $c_4$-real tight solid tori with convex boundary appear only as the standard real tight neighborhoods of Legendrian and real knots (Theorem~\ref{anadolu}; see Section~\ref{komsuluk} for the discussion on standard neighborhoods).

Using these results we deduce that there is a unique real tight structure  on $S^3$ (Theorem~\ref{S3}) and on $\R P^3$  (Theorem~\ref{rp3}). These occur as the link manifolds of a regular point and an $A_1$ singularity of a real algebraic surface respectively (Section~\ref{s3rp3}).

On lens spaces $L(p,q)$ for $p>2$, $q=1,p-1$ up to equivariant isotopy there are exactly two real structures with nonempty real part \cite{hr}. On a genus-1 Heegaard decomposition these act
on the boundaries of the solid tori either as  $(c_1, c_1)$ or as $(c_2, c_3)$, called  
type~$A$ and type~$B$ respectively. Moreover each is equivariantly isotopic to
a real Heegaard decomposition of genus 1;  i.e. the real Heegaard genus equals 1 (type~$C$ or $C'$), See Section~\ref{AveB} for details. There we prove

\begin{theorem}
(Theorem~\ref{hudutB} in Section~\ref{AveB}) There is a $B$-real tight structure on $L(p,q)$ if and only if $q=1$ or $q=p-1$. Up to equivariant contact isotopy, there is a unique $B$-real tight structure on $L(p,p-1)$ and on $L(p,1)$, $p$ even. They are real Milnor fillable. There is at most one $B$-real tight structure on $L(p,1)$, $p$ odd. 
\end{theorem}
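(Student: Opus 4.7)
The plan is to analyze a putative $B$-real tight $L(p,q)$ by cutting along the invariant Heegaard torus $T$ that defines the $B$-structure. First I would invoke equivariant convex surface theory to isotope $T$ to a convex torus with real-invariant dividing set (the equivariant analogue of the standard convex position trick inside a tight 3-manifold). Cutting along $T$ yields two solid tori $V_1, V_2$, real with respect to $c_2$ and $c_3$ respectively, each with convex boundary. This reduces the classification to analyzing how pieces already classified by Theorem~\ref{anadolu} can be glued equivariantly.

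By Theorem~\ref{anadolu}, each $V_i$ must be the standard real tight neighborhood of either a Legendrian knot or a real knot. In particular, the dividing set on $\partial V_i$ consists of exactly two parallel curves whose slope is determined by the flavor of the standard neighborhood. For the existence direction of the theorem, an equivariant gluing $\phi:\partial V_1\to\partial V_2$ must intertwine $c_2$ with $c_3$; a direct $\mathrm{SL}(2,\mathbb{Z})$ computation of such intertwiners shows that the resulting manifold can only be $L(p,q)$ with $q\equiv\pm 1\pmod p$, proving the "only if" direction. For the converse I would exhibit the required structures concretely; these constructions are carried out later in the paper via equivariant contact surgery and real open books.

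For uniqueness I would reduce the enumeration of $B$-real tight structures on $L(p,\pm1)$ to counting equivariant gluings of the two standard neighborhoods modulo equivariant self-diffeomorphisms of the pieces. This count should yield exactly one structure for $L(p,p-1)$ and for $L(p,1)$ with $p$ even, and at most one for $L(p,1)$ with $p$ odd; the parity asymmetry I expect to come from an equivariant self-contactomorphism of the standard neighborhood that identifies two a priori distinct gluings in the even-$p$ case but is obstructed when $p$ is odd. Real Milnor fillability in the two cases with proven existence I would establish by producing explicit real algebraic isolated surface singularities whose links realize these structures, e.g.\ an $A_{p-1}$ singularity in real form for $L(p,p-1)$ and an appropriate cyclic quotient singularity with real defining equations for $L(p,1)$, $p$ even.

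The main obstacle, I expect, is the gluing enumeration: one must track the intertwining $\mathrm{SL}(2,\mathbb{Z})$ elements together with the normalizer coming from equivariant self-diffeomorphisms of the standard neighborhoods, and it is at this bookkeeping step that the parity of $p$ enters and forces the asymmetric conclusion for $L(p,1)$. A secondary difficulty is verifying that the explicitly constructed real singularities do induce the tight contact structures coming from the gluing classification, rather than some other a priori possible $B$-real tight structure on the same lens space; matching the contact structures should proceed by comparing their standard neighborhoods of the central invariant loci on both sides.
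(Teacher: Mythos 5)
Your overall strategy --- cut along the invariant genus-1 Heegaard torus, apply the classification of $c_2$-, $c_3$-, $c_4$-real tight solid tori (Theorem~\ref{anadolu}), and realize the surviving cases by real algebraic singularities --- is indeed the paper's strategy, but your very first step is a genuine gap. There is no ``equivariant analogue of the standard convex position trick'' for invariant tori: the paper itself proves that equivariant convexification can fail (Theorem~\ref{l*Cp-1} shows that the invariant Heegaard torus of the same real structure, viewed in its type-$C$/$C'$ presentation, cannot be made convex equivariantly; this is advertised in the abstract as a by-product of the whole analysis). What saves the argument in the type-$B$ presentation is not genericity but the real neighborhood theorem: the core of the $c_2$-handlebody is a component of the real part, hence Legendrian, so one shrinks that handlebody to the standard equivariant neighborhood of its core, whose boundary is equivariantly convex of some slope $1/k$ by construction; the complement is then a $c_j$-real tight solid torus with convex boundary to which Theorem~\ref{anadolu} applies. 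Uniqueness is then slope rigidity rather than a count of gluings modulo a normalizer: Corollary~\ref{egimler} restricts the complementary slope to $1/k'$ with $k'$ odd for $c_3$, even for $c_4$, arbitrary for $c_2$, and the gluing matrix sends slope $1/k$ to $-(kp+1)/k$ (resp.\ its $q=p-1$ analogue), which lands in the allowed set for exactly one value of $k$ ($k=0$, resp.\ $k=-1$); the same computation is what forces $q=1$ or $q=p-1$.

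You also misplace the parity asymmetry. It does not come from an equivariant self-contactomorphism identifying two gluings in the even case --- note that your mechanism, if the identification were obstructed, would yield ``at most two'' rather than ``at most one'' for $p$ odd. The actual asymmetry is twofold: (i) for $L(p,1)$ the second handlebody carries $c_2$ when $p$ is even but must carry $c_4$ when $p$ is odd (no $c_3$-real tight solid torus has slope $\infty$), and (ii) the difference between ``exactly one'' and ``at most one'' is purely a question of existence --- the uniqueness argument is parity-independent, but an explicit $B$-real tight structure on $L(p,1)$ is only exhibited for $p$ even. Your singularity for $L(p,p-1)$ matches the paper's $A^+_{p-1}=\{x^p-y^2+z^2=0\}$; but deciding which real structure (type $A$ versus type $B$) a given real form of the singularity induces is nontrivial and is done in the paper by computing the induced involution on the resolution graph and the rational Thurston--Bennequin numbers of the real parts, not merely by comparing standard neighborhoods of the central invariant loci.
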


We explicitly construct the claimed structures via isolated real algebraic surface singularities and equivariant contact diagrams  (see \cite{ose} for the foundations of equivariant contact diagrams).

As a curious by-product of this result, we observe that there is 
an invariant Heegaard torus in an $L(p,p-1)$
which cannot be made convex equivariantly, although of course it can always be made convex (Theorem~\ref{l*Cp-1}). Equivalently the unique $B$-real tight $L(p, p-1)$ does not admit a real contact Heegaard decomposition of genus~1; i.e. its real contact Heegaard genus is greater than 1.
The proof is through observing the mismatch between the Thurston-Bennequin invariants of the real Legendrian knots in  
a $B$-real tight  $L(p, q)$ and a hypothetical $C$-real tight 
$L(p, q)$, which are known to be equivariantly isotopic. (See Section~\ref{CandC'} for the discussion on type~$C,C'$ and the computation of the invariants.)

As for the classification in type~$A$, one needs a classification for $c_1$-real tight solid tori and thick tori. The latter is required in peeling off basic $c_1$-slices from solid tori \cite{ho}. We prove that such a peeling-off is impossible by showing that interestingly there is no $c_1$-real tight basic slice (Proposition~\ref{nobasic}) 
although there are  $c_1$-real tight double slices (Proposition~\ref{vardouble}). 
Nevertheless we give a partial classification for $c_1$-real tight solid tori and upper bounds for the count (see Section~\ref{c1soy}).

Using these results we demonstrate 
upper and lower bounds for the count of  $A$-real tight lens spaces $L(p,q)$ for $q=1,p-1$ (Theorem~\ref{hudutA}).
The lower bounds are established by explicit construction through equivariant contact diagrams, real open book decompositions (Section~\ref{realob}) and real algebraic singularities (Section~\ref{q=p-1}). 
The lower bound in each case agrees with the definite count for the number of tight structures of Ko~Honda \cite{ho}. 
We also point out the  cases of real tight lens spaces which are not real Milnor fillable, the obstruction being contact topological. Thus we observe that fillability by isolated real algebraic singularities is neither related with the type of the real structure, nor with the real Heegaard genus.

\section{Preliminaries}
\label{ilk}

The background for the 3-dimensional contact topology  and the techniques we use in the sequel can be found in several standard references such as  \cite{et}, \cite{ge}, \cite{gi}.

In order to fix the notation and refer properly, we recall the classification
of real structures on a solid torus up to isotopy  through real structures \cite{ha}.
We identify the oriented boundary a solid torus $S^1\times D^2$ with $\R^2_{(x,y)}/ \Z^2$ where $x$ and $y$ 
directions correspond to the direction of the meridian $(1,0)$ and of longitude $(0,1)$ respectively.
We fix the coordinates of 
$S^1\times D^2$  as  $(y, (u, v))$ where $(u,v)$ is the rectangular coordinates over $D^2$.
Then the four possible real structures on the solid torus are given as  follows:

\noindent (1) $c_1:(y,(u,v))\mapsto (-y,(-u,v))$ -- hyperelliptic; \raisebox{-0.15cm}{\mbox{\includegraphics[height=.5cm]{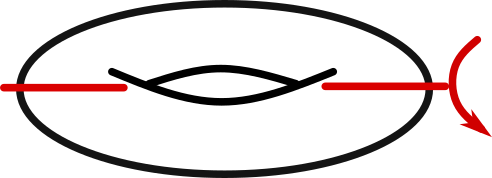}}} \\
(2) $c_2:(y,(u,v))\mapsto (y,(-u,-v))$; \raisebox{-0.15cm}{\mbox{\includegraphics[height=.5cm]{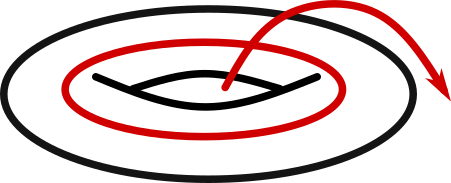}}}\\
(3) $c_3:(y,(u,v))\mapsto (y+1/2,(u,v))$; \raisebox{-0.25cm}{\mbox{\includegraphics[height=.5cm]{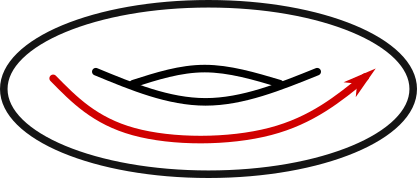}}}\\
(4) $c_4=c_2\circ c_3:(y,(u,v))\mapsto (y+1/2,(-u,-v))$. \raisebox{-0.1cm}{\mbox{\includegraphics[height=.5cm]{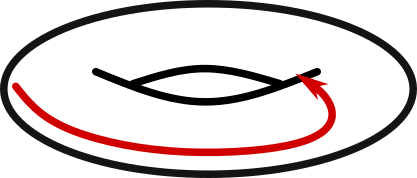}}} \\

All real structures and real parts in the sequel will be sketched in red, whenever color is applicable.

For $a\in \Z-\{0\}$ we set 
$$\eta_{a}\doteq \ker(\cos(2\pi a y) du + \sin(2\pi a y)  dv),$$ 
and 
$$ \eta_0\doteq \ker(du+ \sin(2\pi a y)  dv).$$ 
Note that the contact solid torus $(S^1\times  D^2, \eta_a)$ is $c_1$- and  $c_2$-real tight; it is  $c_3$-real  tight for $a$ odd and  $c_4$-real tight for $a$ even. 
We set $$ \zeta\doteq \ker(dy+ u dv).$$   
Note that $(S^1\times  D^2, \eta_a)$ is $c_1$-real tight.

\subsection{Real tight neighborhood theorems for knots}
\label{komsuluk}
In \cite{os}, we have presented contact neighborhood theorems for real submanifolds 
and invariant contact submanifolds up to equivariant contactomorphism. Also we have given the classification of sufficiently small equivariant neighborhoods of real knots in a real 3-manifold up to equivariant contact isotopy.

Here, we resolve shortly the remaining cases for the classification of real tight neighborhoods of equivariant transverse and Legendrian knots up to equivariant  contactomorphism and up to isotopy through real tight structures.

\begin{theorem} (Real Contact Neighborhood Theorem for Equivariant Transverse  Knots)
An equivariant transverse knot $K$ in a real contact 3-manifold $(M,\alpha,c)$
has a neighborhood equivariantly contactomorphic to a standard one, namely to the real tight solid torus
$(S^1\times D^2,\zeta,c_1)$.
\end{theorem}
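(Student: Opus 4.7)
The strategy equivariantizes the classical contact neighborhood theorem for transverse knots. First I would identify the action of $c$ on $K$. Since $c^{*}\alpha=-\alpha$ reverses the transverse coorientation of $\xi=\ker\alpha$, it also reverses the canonical orientation of $K$ as a transverse knot; hence $c|_K$ is an orientation-reversing involution of $S^1$, namely a reflection with exactly two fixed points $p_0,p_1$.

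Next I would construct an equivariant tubular neighborhood matching the $c_1$-model. Starting from a $c$-invariant Riemannian metric on $M$ (obtained by averaging any metric with its $c$-pullback), the equivariant normal exponential along $K$ yields a diffeomorphism $\phi\colon N(K)\to S^1\times D^2$ intertwining $c$ with an involution $\tilde c$ that preserves the core and acts on it as a reflection; since $c$ is orientation-preserving on $M$ but reverses orientation on the core, $\tilde c$ acts on each normal fibre at $p_0,p_1$ by an orientation-reversing linear involution. Inspecting the list of involutions of $S^1\times D^2$ in Section~\ref{ilk}, only $c_1$ has all of these features, so after a further equivariant self-diffeomorphism of $S^1\times D^2$ we may assume $\phi\circ c=c_1\circ\phi$.

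Push the contact form forward to obtain a $c_1$-anti-invariant contact form $\alpha_0$ on a neighborhood of the core, and compare it with $\alpha_\zeta=dy+u\,dv$, which is itself $c_1$-anti-invariant and makes the core transverse. Both restrict along the core to positive multiples of $dy$; by an equivariant rescaling together with a fibrewise equivariant linear change of coordinates on $D^2$, I would arrange $\alpha_0=\alpha_\zeta$ and $d\alpha_0=d\alpha_\zeta$ along $K$, keeping both anti-invariant. The convex combination $\alpha_t=(1-t)\alpha_0+t\alpha_\zeta$ is then a path of $c_1$-anti-invariant contact forms on a neighborhood of the core, and the Moser vector field $X_t$ it produces is automatically $c_1$-invariant, because $\alpha_\zeta-\alpha_0$ and $d\alpha_t$ are both $c_1$-anti-invariant. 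Integrating $X_t$ yields the required equivariant contactomorphism onto a neighborhood of the core in $(S^1\times D^2,\zeta,c_1)$. The main technical point is the equivariant tubular identification together with the first-order matching along $K$; once that normalization is secured, the equivariance of the Moser step comes for free from anti-invariance of the family $\alpha_t$.
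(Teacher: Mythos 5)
Your argument is sound in outline and reaches the theorem by a more hands-on route than the paper. The paper invokes its earlier equivariant neighborhood theorem \cite[Theorem~2.6]{os} to reduce everything to classifying real conformal symplectic $2$-bundles over $S^1$; it rules out the case where $c$ acts freely on $K$ by observing that the anti-invariant fibre area form $\omega$ would be isotoped through area forms to $-\omega$ over a semicircle, and then shows that the four candidate real mapping tori $N^{\pm\pm}$ are all equivariantly isomorphic. You instead inline the Moser step (correctly: by uniqueness of the solution of the Moser equation, anti-invariance of the whole family $\alpha_t$ does force $X_t$ to be $c$-equivariant), and you exclude the free case more directly by noting that $c^*\alpha=-\alpha$ makes $c|_K$ reverse the orientation of $K$, hence a reflection with two fixed points. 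That is a cleaner exclusion than the paper's, working on the tangent line to $K$ rather than on the contact planes along it, and your identification of the ambient involution with $c_1$ via an averaged metric and the equivariant tubular neighborhood theorem is fine.

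The one place you are too quick is the phrase ``a fibrewise equivariant linear change of coordinates on $D^2$'' used to arrange $d\alpha_0=d\alpha_\zeta$ along $K$. This asks for an equivariant symplectic trivialization of $(\xi|_K,\,d\alpha|_{\xi},\,dc)$ matching the model $(\ker dy,\,du\wedge dv,\,c_1)$, and the existence (and essential uniqueness) of such a trivialization is precisely the content of the paper's real mapping torus computation; a priori an equivariant bundle over $S^1$ could have several isomorphism classes even when the underlying bundle is trivial, so this cannot be dismissed as routine. To close the gap: at each of the two fixed points of $c|_K$ the differential of $c$ restricts to an anti-symplectic linear involution of the contact plane, which therefore has one-dimensional $(+1)$- and $(-1)$-eigenspaces; choose at both fixed points a symplectic basis $(e_u,e_v)$ with $e_u$ in the $(-1)$-eigenspace and $e_v$ in the $(+1)$-eigenspace, extend to a symplectic frame over one closed arc of $K$ between the fixed points (possible since $\mathrm{Sp}(2)$ is connected), and define the frame on the other arc by $e_u\mapsto -dc(e_u)$, $e_v\mapsto dc(e_v)$; anti-symplecticity of $dc$ makes the propagated frame symplectic and the eigenspace choice makes it match up at the endpoints. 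With this inserted, your first-order normalization and the equivariant Moser argument complete the proof.
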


\noindent {\it Proof:} 
By \cite[Theorem~2.6]{os},  it is enough to classify the real conformal symplectic 2-bundles over $S^1$ up to equivariant conformal bundle isomorphism. 

An involution on the unit circle  $S^1$ in $\C$ is isotopic through involutions either
to the reflection with respect to the $x$-axis or to the antipodal map. We denote
respectively by $c_1$ and $c_3$ the corresponding actions of the real structure $c$ on $K$. 
Note that $c_3$ is orientation preserving on $K$ while $c_1$ is reversing.

We will argue that there is no real conformal symplectic 2-bundle over $S^1$ with its real
structure inducing $c_3$ on the zero section. Let there be such a bundle on $S^1$, with real
structure still denoted by $c_3$ (with a slight abuse of notation). The involution $c_3$ is orientation preserving
on the fibers, sending each fiber to the antipodal one. Recall that the symplectic structure $\omega$ on
the fibers is anti-symmetric, i.e. $c_3^*\omega=-\omega$. Thus over the  semi-circle, the 2-form $\omega$ is isotoped 
through symplectic (area) forms to $-\omega$. This is impossible.

Now we want to classify the real conformal symplectic 2-bundles over $S^1$ with the real
structure inducing $c_1$ on the zero section.
Consider two copies $N_1$ and $N_2$ of the canonically oriented trivial bundle 
$[0,\pi]\times \R^2$ with the standard symplectic form on each fiber.
Then  any real conformal symplectic 2-bundle $N$ can be built as a {\it real mapping torus} as follows: 
$$ N^{f,g}= \frac{N_1 \amalg N_2}{(\pi,(u,v))\sim(0,f(u,v)),(0,(u,v))\sim(\pi,g(u,v))}$$
where $f,g\in \mbox{Sp}(2)$ and are equivariant with respect to the real structure $c_1$ on $N$ given by 
$$c_1:N_i^{f,g}\rightarrow N_j^{f,g},\;\; (y,(u,v))\mapsto (1-y,(u,-v)), \;\;\; i,j\in\{1,2\}, i\neq j.$$ 
Here $N_i^{f,g}$ ($i=1,2$)  denote the projection of $N_i$ to $N^{f,g}$. Then $f$ and $g$ can be either $+$id and $-$id. 
Note that with the choices of orientation above, $N^{f,g}$ is canonically oriented and $c_1$ is orientation preserving.
Moreover the four possible bundles $N^{++},N^{--},N^{+-},N^{-+},$ are equivariantly conformal isomorphic. Explicitly $N^{++}$ and $N^{--}$ are equivariantly isomorphic via the diffeomorphism 
\begin{eqnarray*}
N^{++}_1\rightarrow N^{--}_1, 
(y,(u,v))\mapsto (y,u\sin y+v\cos y,-u\cos y+v\sin y) \\
N^{++}_2\rightarrow N^{--}_2, 
(y,(u,v))\mapsto (y,u\sin y+v\cos y,-u\cos y+v\sin y).
\end{eqnarray*}
$N^{++}$ and $N^{+-}$ are equivariantly isomorphic
via the diffeomorphism 
\begin{eqnarray*}
N^{++}_1\rightarrow N^{+-}_1, 
(y,(u,v))\mapsto (y,u\cos\frac{y}{2}+v\sin\frac{y}{2},-u\sin\frac{y}{2}+v\cos\frac{y}{2}) \\
N^{++}_2\rightarrow N^{+-}_2, 
(y,(u,v))\mapsto (y,u\sin\frac{y}{2}-v\cos\frac{y}{2},u\cos\frac{y}{2}+v\sin\frac{y}{2}).
\end{eqnarray*}
(Similarly for the other cases.) \hfill $\Box$ \\

As for an equivariant Legendrian knot $K$, as in \cite[Corollary~2.5]{os}, we observe that the proof of  \cite[Theorem~2.2]{os} shows that the isotopy classification of the real contact structures near $K$ is real topological. Therefore $tw(K)$ with respect to a longitude and the action of the real structure in a neighborhood of $K$ determine the standard tubular neighborhood of $K$ up to equivariant contact isotopy. 
Thus we obtain the following classification; the case (2) has already been presented as \cite[Corollary~2.5]{os} except for $tw=0$. 

\begin{theorem} (Real Contact Neighborhood Theorem for Equivariant Legendrian  Knots)
\label{eqLeg}
Let $K$ be an equivariant Legendrian  knot in a real tight $(M,\xi,c)$.
Let $U$ be a sufficiently small real contact neighborhood of $K$ such that $\partial U$ is an equivariant  convex torus.
Then $U$ is equivariantly contact isotopic through real tight structures to one of the following:
\begin{enumerate}
\item $c|_K=c_1$: $U\sim(S^1\times D^2,\pm\eta_{tw(K)},c_1)$. Here we assume that  the real arcs are oriented so that the real points on the boundary are labeled. Then the sign $\pm$ is determined by the choice of sign decoration on the regions of $\partial U - \Gamma$.
\item $c|_K=id$: $K$ is real and $U\sim(S^1\times D^2,\eta_{tw(K)},c_2)$.
\item $c|_K=c_3$: if $tw(K)$ is odd then $U\sim (S^1\times D^2,\eta_{tw(K)},c_3)$;
if $tw(K)$ is even then $U\sim (S^1\times D^2,\eta_{tw(K)},c_4)$.
\end{enumerate}
\end{theorem}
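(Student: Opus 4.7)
The plan is to reduce the problem to a finite amount of real-topological data -- namely $c|_K$, the twisting $tw(K)$, and in the hyperelliptic case a discrete sign -- and then invoke an equivariant Legendrian neighborhood argument modeled on \cite[Theorem~2.2, Corollary~2.5]{os}.

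First I would enumerate the possible $c|_K$. Up to isotopy through involutions of $S^1$ there are exactly three: the identity, the orientation-reversing reflection with two fixed points (type $c_1$), and the free antipodal map (type $c_3$). Restricting the four ambient models on $S^1\times D^2$ to the core circle by direct inspection, $c_1$ restricts to $c_1$, $c_2$ restricts to the identity, and both $c_3$ and $c_4$ restrict to the antipodal map. So in cases~(1) and~(2) the ambient real structure is pinned down, while in case~(3) there are \emph{a priori} two candidates.

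Second, to decide which candidate actually appears in case~(3) and to verify existence in all cases, I would compute the pullback of the one-form $\alpha=\cos(2\pi a y)\,du+\sin(2\pi a y)\,dv$ under each involution. A direct calculation gives $c_1^*\alpha=-\alpha$ and $c_2^*\alpha=-\alpha$ for all $a$, while $c_3^*\alpha=(-1)^a\alpha$ and $c_4^*\alpha=(-1)^{a+1}\alpha$. The anti-symmetry requirement $c^*\alpha=-\alpha$ therefore selects $c_3$ when $a=tw(K)$ is odd and $c_4$ when $tw(K)$ is even, exactly matching the parity dichotomy in case~(3) and showing that each model listed in the statement is indeed real tight.

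Third, I would upgrade the classical Legendrian neighborhood theorem to the equivariant setting, following the pattern of \cite[Theorem~2.2]{os}. After choosing an equivariant tubular neighborhood of $K$ -- available because $c$ preserves $K$ -- the actual contact form and the model form are both $c$-anti-invariant and agree along $K$ up to rescaling, since they share the same twisting $tw(K)$ with respect to the chosen longitude. Their convex combination is therefore $c$-anti-invariant and remains contact on a smaller neighborhood of $K$, so an equivariant Moser argument produces the required equivariant contact isotopy onto the model. The $\pm$ sign in case~(1) records the orientation of the two real arcs of $K$, equivalently the sign decoration of the regions of $\partial U-\Gamma$ swapped by $c_1$, and cannot be removed equivariantly. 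The step I expect to be most delicate is confirming that this is the only residual invariant in case~(1) while no analogous sign survives in cases~(2) and~(3): the argument should turn on the fact that the sign decoration is detected by the induced $c$-action on the complementary regions of the dividing set, an action which is free for $c_2,c_3,c_4$ on $\partial U$ but has a nontrivial stabilizer only for $c_1$.
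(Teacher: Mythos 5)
Your proposal is correct and follows essentially the same route as the paper: the paper's proof is precisely the reduction to real-topological data ($tw(K)$ plus the germ of $c$ along $K$) via the equivariant Moser/neighborhood argument of \cite[Theorem~2.2, Corollary~2.5]{os}, combined with the pullback computation $c_1^*\alpha=c_2^*\alpha=-\alpha$, $c_3^*\alpha=(-1)^a\alpha$, $c_4^*\alpha=(-1)^{a+1}\alpha$ that the paper records in its Preliminaries to select the models. Your discussion of the residual sign is also in line with the paper, which attributes the $\pm$ in case~(1) to the labeled real points on $\partial U$ obstructing an equivariant isotopy (its Remark~\ref{artieksic1}) and disposes of the analogous ambiguity for $c_2,c_3,c_4$ by an explicit equivariant contact isotopy between the two sign configurations (its Remark~\ref{artieksi}) --- the latter explicit isotopy being the one ingredient you gesture at rather than construct.
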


\begin{remark}
\label{artieksic1}
Note that in case (1), the two contact structures $\pm\eta_{tw(K)}$ are not equivariantly contact isotopic simply because the diffeomorphism $c_1$ that sends one to the other is not equivariantly isotopic to the identity: the labeled real points on the boundary are to be fixed.  (A discrepancy between real and non-real cases... These contact structures do not 
satisfy the Giroux's Flexibility in the real setting; e.g. the hypothesis of \cite[Corollary~3.5]{os} is not satisfied.)
\end{remark}

\section{Real tight solid tori}

We gave a partial classification  of $c_3$-real tight solid tori with $\#\Gamma=2$ in \cite{os}.
Here we give a complete classification for all $c_j$-real tight solid tori $j=2,3,4$ with $\#\Gamma=2$ and a partial classification for $c_1$-real tight solid tori.
First, let us set up the required terminology.

Consider two points $a,b\in\Q$ located at the boundary of  the hyperbolic unit disk with the Farey tessellation.
We consider the longest counterclockwise steps in the Farey tessellation.
Starting from $a$, if one can reach $b$ after $d$ such steps, we say that the distance between 
$a$ and $b$, denoted $dist(a,b)$, is $d$.


Below, we are going
to split contact manifolds into building blocks. These are either basic slices $T^2\times I$ as defined in \cite{ho} supplied with $c$-real tight contact structures or $c$-invariant genuine double slices. Contact topologically, 
a {\em genuine double slice} is nothing but two basic slices glued together, which cannot be split into two  $c$-invariant basic slices though.
We will show in the sequel that depending on $j$, $c_j$-invariant basic slices or genuine double slices may or may not exist.
We denote by $c$-$T(a,b)$ a $c$-real tight, $c$-invariant thick torus with convex boundary with inner slope $a$ and outer slope $b$.
Note that $dist(a,b)$ on a basic slice and on a double slice is $1$ and $2$ respectively.

On a real tight torus, let us denote by  $\mu$ and $\lambda$ the meridian and the longitude and 
by $\tau_{\mu}$ and $\tau_{\lambda}$ the Dehn twist along each respectively. 
Observe that $\tau^2_{\mu}$ and $\tau_{\lambda}$ (resp. $\tau_{\mu}$ and $\tau^2_{\lambda}$) can be realized in a $c_3$-invariant way (resp. $c_2$-invariant). Both $\tau_{\mu}$ and $\tau_{\lambda}$ can be made $c_1$-invariant.

\subsection{Stripping-off $c_3$-real tight solid tori}
\label{c3soy}

We want to break a $c_3$-real tight solid torus with  boundary slope $s$ into equivariant slices.
We proceed as in the proof of \cite[Theorem~11]{os}. 
After applying a sequence of $c_3$-invariant Dehn twists $\tau^2_{\mu}$, we can assume that  $s\leq -1$ or $s\geq 1$. 
An equivariant   Legendrian isotopic copy  $L$ of the core has a standard equivariant tight neighborhood
with equivariant and convex boundary. The boundary slope is $-\frac{1}{2k+1}$ (by Theorem~\ref{eqLeg}
or \cite[Corollary~15]{os}).
We decompose the solid torus into two equivariant pieces: $\nu(L)$ and  $T(-\frac{1}{2k+1},s)$.
The latter is  minimally twisting by Proposition~4.16 and the proof of Proposition~4.15 in \cite{ho}.
We  factor $T(-\frac{1}{2k+1},s)$ into equivariant slices. If the final slice is
a basic slice of the form  $T(-\frac{1}{2k+1},-\frac{1}{2k})$ or $T(-1,-2)$,
we have shown in \cite{os} that this  leads to an overtwisted structure.

Otherwise we have a final slice in one of the following forms:\\
(i) Either  $T(-1,t)$ where $t\in \Q\cap[-3,-1)$ and $dist (-1,t)=1$ or $2$ (single or double slice respectively); \\
(ii) Or $T(-\frac{1}{2k+1},-\frac{1}{2k-1})$.

Now via applying a chain of $c_3$-equivariant homeomorphisms, second case above can be transformed to
a thick torus $T(-1,p)$ with $p\in\Z^+$ and the double slices in the first case above can be transformed to such a $T(-1,p)$ or a $T(-1,-\frac{p}{q})$ with $p=2k+1$, $q=k$, $k\in\Z^+$.
In particular there are two possible $c_3$-real tight single slices: $T(-1,-2)$ or $T(-1,\infty)$, and the former was already proven impossible in  \cite[Proposition~4.11]{os}.

\subsection{Stripping-off $c_2$-real tight solid tori}
\label{c2soy}

With respect to the real structure $c_2$, $\tau_{\mu}$ is 
equivariant while $\tau_{\lambda}$ is not. By means of a $c_2$-equivariant $\tau_{\mu}$ a boundary slope can always be made less than or equal to $-1$.

Suppose $(S^1 \times D^2,\xi,c_2)$ is a real tight solid torus with convex boundary and with
$\#\Gamma = 2$.
Let $s=-p/q$ $(p,q\in \Z, p>q\geq 0)$.


The core $L$ of the solid torus is real, hence Legendrian.
Let $k=\mbox{tw}(L)$.
By \cite[Corollary~2.5]{os}, a sufficiently small neighborhood $U$ of $L$ is equivariantly contact isotopic
to $U_k$. Excise $U$ to get an equivariant real thick torus $(T^2\times I,\xi,c_2)$ with 
convex boundary components $T_0=T^2\times\{0\}$ and $T_1=T^2\times\{1\}$ with $\#\Gamma=2$ on each and with slopes
$s_0=-1/k$ and $s_1=-p/q$ respectively. We denote such a thick torus  by $T(-1/k,-p/q)$ as before.

We proceed very much similarly as in the above paragraph; we decompose $T(-1/k,-p/q)$ 
into equivariant basic and double slices. 
Namely, suppose that the equivariant characteristic foliation (Legendrian rulings) 
has slope 0. 
A  meridional annulus has Legendrian boundary with negative twisting with respect to the annulus and thus  
can be perturbed relative to the boundary to obtain a convex annulus $A$ (\cite{ho} or \cite{ka}). 
The inner boundary component of $A$ has tw $-1$ while outer has $-p$. If $p>2$,
there are at least two bypasses on $A$ for $T_1$. We can perform one
bypass after the other on $T_1$ to obtain a $c_2$-invariant convex torus $T'_1$.
Here we note that after the first bypass, the shifted copy $\gamma$ of the meridian (the old meridional arc union the inner part of the bypass halfdisk) is still a leaf of the characteristic foliation. On the disk bounded by $\gamma$ there is still a bypass which can be made symmetric with the first.
The thick torus
bounded by $T_1$ and $T_1'$ is a $c_2$-invariant double slice. Moreover, the distance 
from $s_1$ to the slope $s_1'$ on $T_1'$ is 2. 

This discussion shows that after  a number of steps, we obtain a collection of $c_2$-invariant 
double slices and possibly a single invariant basic slice. Furthermore, the last slice
is either $T(-\frac{1}{k},-\frac{1}{k-1})$ or $T(-\frac{1}{k},-\frac{1}{k-2})$ or
$T(-1,-\frac{p}{q})$, where the distance from $-\frac{p}{q}$ to $-1$ is 1 or 2 and 
$-\infty\leq -\frac{p}{q}<-1$. Whatever the last slice is, we can find a
$c_2$-equivariant diffeomorphism to change $s_0$ to $-1$ by $\tau_{\mu}$ and then modify $s_1$ by $\tau^2_{\lambda}$ to obtain a real tight slice in one of the forms $T(-1,-2)$ or $T(-1,-\infty)$ or $T(-1,-\frac{p}{q})$ or $T(-1,p)$ where $p\in \Z^+$ and $-3\leq-\frac{p}{q}<-2$, 
$dist(-1,-\frac{p}{q})=2$.
In particular, $T(-\frac{1}{k},-\frac{1}{k-1})$ is equivariantly contactomorphic to $T(-1,-\infty)$ and $T(-\frac{1}{k},-\frac{1}{k-2})$ to $T(-1,+1)$.

We claim that a $c_2$-real, tight, minimally twisting $T(-1,-2)$ does not exist. In fact, 
consider a convex equivariant  meridional annulus.
The inner boundary component of the annulus has $tw=-1$ while outer has $-2$.
The dividing set on the annulus must be $c_2$-symmetric and, because of the minimally twisting property, it must have at least one component connecting the inner boundary to the outer. However such a dividing set  is impossible.

\subsection{Classification of $c_2$-, $c_3$- and $c_4$-real tight solid tori}

First we will note the obvious correspondence between $c_2$-real tight thick tori
and $c_3$-real tight thick tori.

Let $T(-1,b)$ be a $c_3$-real tight thick torus. The map $\phi: T(a,b)\rightarrow T^2\times I$
given by the matrix $\begin{pmatrix} 0 & 1 \\ -1 & 0 \end{pmatrix}$ supplies the thick torus with a
$c_2$-real tight structure. With that structure, it is a $c_2$-real $T(1,-1/b)$. Via the $c_2$-equivariant
diffeomorphisms $\begin{pmatrix}  1 & -2 \\ 0 & 1 \end{pmatrix}$, we can turn that thick torus into a $c_2$-real $T(-1,-1/(b+2))$.
Meanwhile via the $c_2$-equivariant
diffeomorphisms $\begin{pmatrix}  1 & 0 \\ -2 & 1 \end{pmatrix}$, we get a $c_2$-real $T(-1,-(2b+1)/b)$.
Note that the above discussion is still valid if we replace the roles of $c_3$ and $c_2$.
Thus we have proven

\begin{lemma}
\begin{enumerate}
\item There is a $c_3$-real (respectively $c_2$-real) tight thick torus $T(-1,k)$, $(k\in \Z^+)$ if and only if 
there is a $c_2$-real  (respectively $c_3$-real)  tight thick torus $T(-1,-(2k+1)/k)$.
\item There is a $c_3$-real (respectively $c_2$-real) tight thick torus $T(-1,k)$, $(k\in \Z^-, k<2)$ if and only if 
there is a $c_2$-real  (respectively $c_3$-real)  tight thick torus $T(-1,-1/(k+2))$.
\end{enumerate}
\label{oha}
\end{lemma}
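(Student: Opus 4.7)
The plan is to follow closely the explicit calculation already sketched in the paragraph preceding the statement: in each direction I would produce a concrete diffeomorphism of $T^2\times I$ realizing the claimed equivariant identification. The strategy splits into two steps—first swap $c_2$ and $c_3$ by a $PSL(2,\Z)$ coordinate change, then normalize the inner slope back to $-1$ using a $c_2$-equivariant Dehn twist square.

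For the first step I would verify that the diffeomorphism induced by $S=\begin{pmatrix} 0 & 1\\ -1 & 0\end{pmatrix}$ pulls $c_3$ back to $c_2$. This is a direct coordinate check using the descriptions from Section~\ref{ilk}: $c_3$ is a half-translation in the longitudinal direction with the disk fibers fixed, while $c_2$ is trivial longitudinally and antipodal on the disk fibers; the interchange of meridian and longitude effected by $S$ swaps these two descriptions up to isotopy through real structures. Applying $S$ to a $c_3$-real tight $T(-1,b)$ thus produces a $c_2$-real tight thick torus with boundary slopes transformed by $s\mapsto -1/s$, namely $T(1,-1/b)$.

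For the second step I would apply one of the two $c_2$-equivariant matrices $\tau_\mu^{-2}=\begin{pmatrix} 1 & -2\\ 0 & 1\end{pmatrix}$ or $\tau_\lambda^{-2}=\begin{pmatrix} 1 & 0\\ -2 & 1\end{pmatrix}$, which are equivariant because $\tau_\mu$ and $\tau_\lambda^2$ are $c_2$-equivariant. A short slope computation shows that $\tau_\mu^{-2}$ sends the boundary pair $(1,-1/b)$ to $(-1,-1/(b+2))$ while $\tau_\lambda^{-2}$ sends it to $(-1,-(2b+1)/b)$. Selecting $\tau_\lambda^{-2}$ with $b=k\in\Z^+$ establishes part (1); selecting $\tau_\mu^{-2}$ with $b=k$ a sufficiently negative integer establishes part (2). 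The reverse implications follow by inverting all these diffeomorphisms, and the parenthetical swap of the $c_2$ and $c_3$ roles is symmetric because $S^{-1}$ intertwines the two real structures in the opposite direction. The only delicate point is the initial verification that $S$ genuinely intertwines $c_2$ and $c_3$ up to isotopy through real structures; everything else is straightforward $PSL(2,\Z)$ arithmetic and presents no real obstacle.
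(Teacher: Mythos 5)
Your proposal is correct and follows essentially the same route as the paper's own argument: the coordinate change $\begin{pmatrix} 0 & 1\\ -1 & 0\end{pmatrix}$ converting a $c_3$-real $T(-1,b)$ into a $c_2$-real $T(1,-1/b)$, followed by the $c_2$-equivariant matrices $\begin{pmatrix} 1 & -2\\ 0 & 1\end{pmatrix}$ and $\begin{pmatrix} 1 & 0\\ -2 & 1\end{pmatrix}$ yielding $T(-1,-1/(b+2))$ and $T(-1,-(2b+1)/b)$ respectively, with the roles of $c_2$ and $c_3$ interchangeable. The slope arithmetic and the choice of which twist gives which part of the statement both match the paper.
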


Hence we obtain 
\begin{theorem}
There is no $c_2$- and $c_3$-real tight double slice.
\label{vay}
\end{theorem}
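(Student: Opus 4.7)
The plan is to assume a $c_j$-real tight double slice exists and to produce a $c_j$-invariant convex torus at the intermediate Farey slope, splitting it equivariantly into two basic slices and directly contradicting the defining property of a double slice.

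By Lemma~\ref{oha}, a $c_2$-real tight double slice of the form $T(-1,k)$ with $k\in\Z^+$ is equivalent to a $c_3$-real tight $T(-1,-(2k+1)/k)$, and conversely. Combined with the stripping-off in Sections~\ref{c3soy} and~\ref{c2soy}, any $c_j$-real tight double slice is, up to equivariant homeomorphism, of the form $T(-1,p)$ with $p\in\Z^+$ (unique intermediate Farey vertex $\infty$) or $T(-1,-p/q)$ with $-3\le -p/q<-2$ and Farey distance $2$ (intermediate vertex $-2$). Applying Lemma~\ref{oha} one more time, the first type can be converted to the second in the opposite setting, so we may assume the intermediate Farey vertex is $-2$ throughout.

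The core step is to produce, inside the thick torus $T(-1,-p/q)$, a $c_j$-invariant convex torus $T^*$ with boundary slope $-2$. Once $T^*$ is found, it partitions the thick torus into two $c_j$-invariant basic slices $T(-1,-2)$ and $T(-2,-p/q)$, directly contradicting the assumption that $T(-1,-p/q)$ is a double slice, which by definition cannot be split equivariantly into two basic slices.

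Constructing $T^*$ equivariantly is the main technical obstacle, since as pointed out in Section~\ref{c2soy} generic $c_2$-equivariant bypass moves shift the Farey slope by distance $2$ rather than $1$. The resolution should come from analyzing a $c_j$-invariant convex meridional annulus with Legendrian rulings of an appropriate slope and exploiting the special symmetry at slope $-2$; the $c_j$-symmetric dividing set should be forced to contain a $c_j$-fixed arc that yields a single symmetric bypass at slope $-2$, producing the desired intermediate torus $T^*$.
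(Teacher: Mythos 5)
Your reduction via Lemma~\ref{oha} to a double slice $T(-1,-p/q)$ with intermediate Farey vertex $-2$ is consistent with the paper, and \emph{if} you could produce a $c_j$-invariant convex torus $T^*$ of slope $-2$ inside it, the proof would close up --- though not quite for the reason you give: splitting into two invariant basic slices only contradicts \emph{genuineness}, whereas the real punchline would be that the inner piece is a $c_j$-invariant basic slice $T(-1,-2)$, which does not exist (by \cite[Proposition~4.11]{os} for $c_3$, and by the meridional-annulus argument at the end of Section~\ref{c2soy} for $c_2$). The genuine gap is the core step itself: you flag the construction of $T^*$ as ``the main technical obstacle'' and then only assert that a single symmetric bypass ``should'' be forced. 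It is not, and in fact it cannot be obtained the way you suggest. On the complement of the core, $c_2$ acts freely, and $c_3$ is free everywhere; consequently an invariant convex meridional annulus either carries a free involution ($c_2$) or occurs as one of a pair of annuli interchanged by the involution ($c_3$). A single invariant boundary-parallel dividing arc on a freely-acted annulus would restrict to a free involution of an arc, which is impossible (every involution of an arc has a fixed point); so equivariant bypasses for $c_2$ and $c_3$ come in disjoint symmetric pairs and move the boundary slope by Farey distance $2$, skipping the vertex $-2$ entirely. This is exactly why the stripping-off in Sections~\ref{c3soy} and \ref{c2soy} produces double slices in the first place, and it is the same phenomenon that lets genuine $c_1$-double slices exist with no invariant intermediate convex torus (Propositions~\ref{nobasic}, \ref{vardouble} and their corollary). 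In short, your strategy tries to construct an object whose nonexistence is essentially equivalent to the statement being proved.

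The paper's proof avoids any intermediate invariant torus: it uses the explicit linear correspondence of Lemma~\ref{oha} to transport the hypothetical $c_3$-double slice to a $c_2$-real tight thick torus with transformed boundary slopes, and then observes that the stripping-off already carried out for that real structure forces a tight $T(-1,0)$, which cannot exist; the second family of double slices is handled by chaining both parts of Lemma~\ref{oha} and reducing to the same contradiction. If you wish to salvage your splitting idea, you must first \emph{prove} the existence of the invariant slope-$(-2)$ torus, and the free-action observation above shows that no argument based on a single symmetric bypass on an invariant meridional annulus can deliver it.
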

\noindent {\it Proof:}
There are two possible sorts of $c_3$-real tight double slices. For  $k\in \Z^+$, a  $c_3$-$T(-1,k)$ to exist, there must exist a $c_2-T(-1,-1/(k+2))$ by Lemma~\ref{oha}.(2). Stripping off this latter
thick torus into basic slices, at the end of the first step we obtain a tight $T(-1,0)$ which does not exist.
Therefore there is no  $c_3$-$T(-1,k), (k\in \Z^+)$.

Similarly a  $c_3$-$T(-1,-(2k+1)/k), (k\in \Z^+)$ exists if and only if a  $c_2-T(-1,k)$  exists (Lemma~\ref{oha}.(1))
if and only if a  $c_3$-$T(-1,-1/k)$ exists  (Lemma~\ref{oha}.(2)). Then the proof follows as above.
 \hfill $\Box$

The discussion above helps us prove that $c_2$-, $c_3$-  and $c_4$-real tight solid tori are scarce in the following sense:

\begin{theorem}
\label{anadolu}
The classification of $c_2$-, $c_3$-  and $c_4$-real tight solid tori with convex boundary with $\#\Gamma=2$:

The standard neighbourhoods of Legendrian knots with convex boundary on which the real structure is $c_3$ (respectively $c_4$) are the only $c_3$-real (respectively $c_4$-real) tight solid tori up to equivariant contact isotopy.

Similarly the standard neighbourhoods of real knots with convex boundary are  the only $c_2$-real tight solid tori up to equivariant contact isotopy.
\end{theorem}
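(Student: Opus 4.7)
The plan is to push the stripping-off procedures established in Sections~\ref{c3soy} and \ref{c2soy} to completion, using Theorem~\ref{vay} to eliminate every candidate terminal double slice and invoking previously-known obstructions to eliminate the remaining candidate basic slices. What survives will correspond precisely to the standard equivariant neighborhoods catalogued in Theorem~\ref{eqLeg}.

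For $j=2,3$, I would take a $c_j$-real tight $(S^1\times D^2,\xi,c_j)$ with convex boundary of slope $s$ and $\#\Gamma=2$. The core $L$ is an equivariant Legendrian knot (real for $c_2$, and with $c_3|_L=c_3$ for $c_3$), and Theorem~\ref{eqLeg} furnishes a standard equivariant neighborhood $U$ of $L$ of prescribed equivariant contact-isotopy type. Excising $U$ produces an equivariant thick torus $T(s_0,s)$, which the arguments of Sections~\ref{c3soy} and \ref{c2soy} decompose into equivariant basic and double slices whose terminal piece lies on an explicit short list. Every double-slice possibility on that list is killed by Theorem~\ref{vay}; the basic slice $T(-1,-2)$ is killed either by \cite[Prop.~4.11]{os} (for $c_3$) or by the convex-meridional-annulus argument in Section~\ref{c2soy} (for $c_2$); and any remaining single basic slice, such as a $T(-1,\infty)$ in normal form, is precisely the slice one peels off when enlarging a standard neighborhood to accommodate a destabilization of $L$. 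Hence $T(s_0,s)$ is absorbable into a larger standard neighborhood, and the whole solid torus is, up to equivariant contact isotopy, the standard neighborhood of an equivariant Legendrian knot equivariantly isotopic to $L$.

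The $c_4$ case runs parallel. Theorem~\ref{eqLeg}(3) supplies the standard equivariant neighborhood $U\sim(S^1\times D^2,\eta_{\mathrm{tw}(L)},c_4)$ of an equivariant Legendrian core $L$ with $\mathrm{tw}(L)$ even, and both $\tau_\mu^2$ and $\tau_\lambda^2$ are $c_4$-equivariant, so the same normal-form transformations are available and the excised thick torus is decomposed into equivariant slices as before. Any candidate $c_4$-invariant basic or double slice arising in the decomposition can, via the equivariant identification of $c_4|_\partial$ with a Dehn-twist conjugate of $c_3|_\partial$ on the boundary torus, be matched with a $c_3$-invariant slice of the same Farey type, and the impossibility results from the $c_3$ analysis thus rule out every possibility except the absorbable single basic slice. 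The main obstacle I anticipate is making precise the sense in which a surviving single basic slice in the terminal position is truly absorbed into an enlarged standard neighborhood rather than producing a genuinely new real tight structure; this demands a careful application of Theorem~\ref{eqLeg} to identify both the Legendrian isotopy class of the destabilized core and the equivariant contact-isotopy class of its tubular neighborhood.
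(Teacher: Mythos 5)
Your overall strategy coincides with the paper's: excise a standard equivariant neighbourhood $\nu(L)$ of (a Legendrian copy of) the core, decompose the complementary thick torus into equivariant slices via Sections~\ref{c3soy} and \ref{c2soy}, kill the double slices with Theorem~\ref{vay} and the slice $T(-1,-2)$ with \cite[Proposition~4.11]{os} (resp.\ the meridional annulus argument), and reduce $c_4$ to $c_3$ by conjugating with $\tau_\mu$ fibrewise. The gap is exactly at the step you flagged: the surviving single basic slice $T(-1,\infty)$. You propose to let it survive and ``absorb'' it into the standard neighbourhood of a destabilization of $L$. This cannot be made to work, and the paper's proof does something different: it shows the slice does not exist. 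For $c_2$ the core is the fixed-point set of the involution, hence canonical, and its twisting is an invariant of the germ of $\xi$ along it; a destabilization of $L$ is not a real knot, so $\nu(L)\cup T(-1,\infty)$ would be a $c_2$-real tight solid torus whose boundary slope disagrees with that of the standard neighbourhood of its real core --- a counterexample to the statement rather than an instance of it. For $c_3$, absorbing $T(-1,\infty)$ would produce a $c_3$-solid torus of boundary slope $\infty=1/0$, which by Theorem~\ref{eqLeg}(3) cannot be the standard neighbourhood of any $c_3$-equivariant Legendrian knot, since even twisting forces the real structure $c_4$.

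The missing ingredient is Lemma~\ref{oha}(2), which you never invoke: a $c_2$-real single slice $T(-1,\infty)$ exists if and only if a $c_3$-real single slice $T(-1,-2)$ exists (ruled out by \cite[Proposition~4.11]{os}), and symmetrically a $c_3$-real $T(-1,\infty)$ exists if and only if a $c_2$-real $T(-1,-2)$ exists (ruled out by the annulus argument in Section~\ref{c2soy}). With this transfer between $c_2$ and $c_3$ at slope $\infty$, \emph{all} slices are eliminated, the thick torus $T(s_0,s)$ is forced to be trivial, and the solid torus is literally the standard neighbourhood --- no absorption or identification of a destabilized core is needed. Without it, your argument is incomplete for both $c_2$ and $c_3$, and consequently for $c_4$ as well.
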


\noindent {\it Proof:} 
Given a $c_2$-, $c_3$ or $c_4$- real tight solid torus, we take a close, Legendrian copy of the core, excise
its standard equivariant neighbourhood and start peeling-off equivariant  slices  from the
remaining thick torus. The discussion in Sections~\ref{c3soy} and \ref{c2soy} has already eliminated
all the possibilities except a $c_2$-real single slice $T(-1,-\infty)$,  the existence of which is equivalent to
the existence of a $c_3$-real single slice $T(-1,-2)$ (Lemma~\ref{oha}.(2)), which does not exist  by \cite[Proposition~4.11]{os}

Similarly instead of peeling-off equivariant slices in case of $c_4$, we turn the initial $c_4$-real tight thick torus 
into a $c_3$-real tight thick torus via a diffeomorphism 
which acts as $\tau_{\mu}$ on each $T^2\times \{*\}$. Since a  $c_3$-real tight basic or double slice does not exist (Theorem~\ref{vay}), the proof for the case $c_4$ follows.
\hfill $\Box$

In particular, we have
\begin{coro} \label{egimler}
A $c_3$-real tight solid torus can only have slopes $1/k$, $k$ an odd integer (Theorem~\ref{eqLeg} and \cite[Corollary~4.4]{os}). A $c_4$-real tight solid torus can only have slopes $1/k$, $k$ an even  integer (Theorem~\ref{eqLeg}).
A $c_2$-real tight solid torus can only have slopes $1/k$, $k\in\Z$ \cite[Corollary~2.5]{os}.
\end{coro}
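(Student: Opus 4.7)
The plan is simply to assemble Theorem~\ref{anadolu} and Theorem~\ref{eqLeg}. Given a $c_j$-real tight solid torus with convex boundary and $\#\Gamma=2$ for some $j\in\{2,3,4\}$, Theorem~\ref{anadolu} places it in equivariant contact isotopy with the standard real tight neighborhood of a Legendrian (for $j=3,4$) or real Legendrian (for $j=2$) core $K$. By Theorem~\ref{eqLeg} this standard neighborhood is of the form $(S^1\times D^2,\eta_{tw(K)},c_j)$, up to a sign ambiguity that is irrelevant for computing the boundary slope.

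Next I would read the boundary slope off the model directly. The contact structure $\eta_k=\ker(\cos(2\pi k y)\,du+\sin(2\pi k y)\,dv)$ induces on a convex perturbation of $\partial(S^1\times D^2)$ (with $\#\Gamma=2$) a dividing set of slope $1/k$ in the $(\mu,\lambda)$-coordinates fixed in Section~\ref{ilk}; this is the standard characteristic foliation computation for the tubular neighborhood of a Legendrian knot with twisting number $k$.

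Finally, the parity constraints on $k=tw(K)$ are exactly what is recorded in Theorem~\ref{eqLeg}: the $c_3$-case forces $tw(K)$ odd, the $c_4$-case forces it even, while the $c_2$-case permits every integer. The parenthetical citations \cite[Corollary~2.5 and Corollary~4.4]{os} supply the same constraints independently and yield the list of admissible slopes stated in the corollary.

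I do not anticipate any substantive obstacle, as the content of the corollary is fully packaged inside the two theorems invoked. The only step that is not immediate from quotation is the identification of the boundary slope of $(S^1\times D^2,\eta_k)$, and this is a routine calculation that has already been used implicitly throughout Sections~\ref{c3soy} and~\ref{c2soy} when passing between twisting numbers of cores and boundary slopes of their standard neighborhoods.
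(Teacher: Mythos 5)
Your proposal is correct and is exactly the argument the paper intends: the corollary is stated with the citations to Theorem~\ref{anadolu} (reduction to standard neighborhoods) and Theorem~\ref{eqLeg} (the parity of $tw(K)$ for each $c_j$) as its entire justification, and your reading of the boundary slope $1/tw(K)$ off the model $\eta_{tw(K)}$ is the same routine identification used throughout Sections~\ref{c3soy} and~\ref{c2soy}. No gap.
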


We also note
\begin{coro}
Up to equivariant isotopy through real tight structures relative to the boundary 
there is a unique $c_i$-real tight contact structure, $i=1,2,3,4$,
on $S^1\times D^2$ with  convex boundary with $s=-1$ and $\#\Gamma=2$.
\end{coro}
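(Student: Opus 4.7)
The plan is to leverage Theorems~\ref{anadolu} and \ref{eqLeg}, reducing each case to a uniqueness statement about standard tubular neighborhoods of an invariant Legendrian or real core. The underlying observation is that the slope $s=-1$ leaves no room for nontrivial peeling: any equivariant Legendrian copy of the core with maximal twist already fills the solid torus.

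For $i=2,3,4$ I invoke Theorem~\ref{anadolu}, which says that any $c_i$-real tight solid torus with convex boundary and $\#\Gamma=2$ is equivariantly contact isotopic to a standard tubular neighborhood. Corollary~\ref{egimler} then forces the Legendrian (or real) core to have $tw=-1$: this is admissible for $c_2$ (any integer twist) and for $c_3$ (odd twist), and produces the explicit model $(S^1\times D^2,\eta_{-1},c_i)$ in each case, unique by parts (2)--(3) of Theorem~\ref{eqLeg}. For $c_4$, which demands even twist, no such standard neighborhood exists, so the assertion is vacuous. The equivariant Gray stability theorem then promotes equivariant contact isotopy to isotopy through real tight structures relative to the fixed boundary.

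For $i=1$, where Theorem~\ref{anadolu} is not available, I would realize the $c_1$-invariant core as an equivariant Legendrian knot with $tw=-1$. The Bennequin-type bound coming from the convex boundary of slope $-1$ forces $tw\le-1$, and equality is achieved after applying an equivariant Legendrian realization to the invariant core. By Theorem~\ref{eqLeg}(1) its standard neighborhood is $(S^1\times D^2,\pm\eta_{-1},c_1)$, with the sign determined by the sign decoration on $\partial U-\Gamma$ (cf.~Remark~\ref{artieksic1}). Since this standard neighborhood already has boundary slope $-1$, the complementary thick torus $T(-1,-1)$ is a trivial collar, and the neighborhood fills the solid torus; uniqueness then follows.

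I expect the main obstacle to lie in the $i=1$ step: justifying that the invariant core can simultaneously be put in equivariant Legendrian position and realize the maximal twist, and verifying that the two candidate structures $\pm\eta_{-1}$ are indeed separated precisely by the sign decoration carried by the fixed convex boundary.
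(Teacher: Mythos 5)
Your handling of $i=2,3$ via Theorem~\ref{anadolu} and Theorem~\ref{eqLeg}, with the sign issue settled by Remark~\ref{artieksi}, is sound, and your observation that the $i=4$ case is vacuous at slope $-1$ (a $c_4$-real tight solid torus only occurs at slopes $1/k$ with $k$ even, cf.\ Corollary~\ref{egimler} and the proof of Lemma~\ref{typeB}) is consistent with the rest of the paper and worth flagging as a defect of the corollary's phrasing. For these cases your route and the paper's (cutting along a convex meridional disk and reducing to the unique real tight 3-ball, as in \cite{os} for $c_3$) are essentially interchangeable.

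The genuine gap is in the $i=1$ case. You assert that the invariant core can be put in equivariant Legendrian position realizing the maximal twisting $tw=-1$, so that the complementary thick torus $T(-1,-1)$ is a trivial collar. But pushing an \emph{equivariant} Legendrian representative of the core up to twisting $-1$ is precisely where the $c_1$-equivariant theory departs from Honda's: there is no $c_1$-real tight basic slice (Proposition~\ref{nobasic}), only genuine double slices (Proposition~\ref{vardouble}), so equivariant thickening of a standard neighborhood changes the twisting only in steps of two. Starting from an equivariant Legendrian copy of the core with twisting $k<-1$ you can only reach slopes $1/(k+2),1/(k+4),\dots$; you would have to rule out separately that the maximal equivariant twisting is even (which would leave a single, nonexistent, $c_1$-basic slice) and then still classify the double slices being absorbed. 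None of this is addressed by ``applying an equivariant Legendrian realization.'' The paper avoids the issue entirely: in Proposition~\ref{eksikey} the slope~$-1$ solid torus is cut along an equivariant pair of disjoint convex meridional disks $D,D'$ interchanged by $c_1$, yielding two $c_{st}$-real tight 3-balls each containing one real arc, and uniqueness relative to the (sign-decorated) boundary follows from Lemma~\ref{balltw}. That disk-cutting argument is the ``similar idea'' the one-line proof of the corollary alludes to, and it is the ingredient your proposal is missing for $i=1$.
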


\begin{proof}
The case of $i=3$ is shown in \cite{os}.  Indeed similar idea applies to the other cases.
\end{proof}

\begin{remark}
\label{artieksi}
There is a subtility in this uniqueness to be emphasized. For $j=2,3$ or 4, let $T$ be a $c_j$-invariant convex torus in standard form in a contact 3-manifold.
There are exactly two possible sign configurations on the regions of $T$ determined by the linear dividing set on $T$.  
A priori these need not be $c_j$-equivariantly contact isotopic, although they are in non-real setting.
However one can explicitly see that there is indeed  a $c_j$-equivariant contact isotopy between the two sign configurations. In this way the hypothesis 
of \cite[Corollary~3.5]{os} is satisfied so that we may employ Giroux's Flexibility in the real setting and get  the freedom to choose signs 
in one way or the other. Compare  with Remark~\ref{artieksic1}.
\end{remark}

\subsection{$c_1$-real tight solid tori}
\label{c1soy}

First we refine  \cite[Theorem~1.2]{oz}. Let $\gamma$ be a Legendrian arc in a tight 3-ball. We denote the {\em number of half twists along} $\gamma$ by $t(\gamma)=\lfloor 2tw(\gamma)\rfloor / 2 \in\Z+\frac{1}{2}$. Here $tw(\gamma)$ is with respect to the standard framing and relative to the end points.

\begin{lemma}
Let $B$ be a $c_{st}$-real tight 3-ball with real points $r_1,r_2$ on $\partial B$ and the real arc  $r_1 r_2$. 
Then up to equivariant contact isotopy {\em fixing $\xi$ at  $r_1$ and $r_2$}, there is a unique such $c_{st}$-real tight 3-ball. 
Each component of the dividing curve on the oriented annulus $\partial B - (\nu(r_1)\cup\nu(r_2))$ has $t(r_1 r_2)+1$ half twists.
\label{balltw}
\end{lemma}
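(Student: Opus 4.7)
The plan is to sharpen \cite[Theorem~1.2]{oz} by working in an explicit $c_{st}$-equivariant model and applying a relative version of equivariant contact isotopy.

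First, I would invoke \cite[Theorem~1.2]{oz} to obtain a unique $c_{st}$-real tight 3-ball with real arc $r_1 r_2$ up to equivariant contact isotopy. To promote this to isotopy fixing $\xi$ at $r_1, r_2$, I would apply an equivariant relative Gray--Moser argument in the spirit of \cite[Theorem~2.2]{os}: given two $c_{st}$-real tight 3-balls whose contact planes agree at $r_1, r_2$, one first matches them by the non-relative equivariant contact isotopy from \cite{oz}, and then observes that the resulting equivariant contact diffeomorphism of a small invariant bi-disk neighborhood of $\{r_1, r_2\}$ fixes the contact plane at these points and is therefore equivariantly contact isotopic to the identity by the equivariant Darboux theorem. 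This cutoff yields the desired relative isotopy.

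Second, for the half-twist count I would use the explicit model $B \subset \R^3$ the unit ball with $c_{st}(x,y,z)=(-x,y,-z)$ and contact form $\alpha = dz - y\, dx + x\, dy$; one checks directly that $c_{st}^*\alpha = -\alpha$, so $(B, \ker\alpha, c_{st})$ is $c_{st}$-real tight. The fixed arc is the $y$-axis segment in $B$, which is Legendrian since $\alpha(\partial_y) = x$ vanishes on it. In this model the characteristic foliation on $\partial B$, the dividing set $\Gamma$, and their intersections with a small invariant collar of $\{r_1, r_2\}$ can be read off directly, giving a base-case computation of both $t(r_1 r_2)$ and the number of half-twists of each component of $\Gamma$ restricted to the annulus.

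Third, to extend the count to arbitrary $t(r_1 r_2)$, I would use equivariant Legendrian stabilization of the real arc: a $c_{st}$-symmetric pair of zig-zags near the midpoint of $r_1 r_2$ decreases $tw(r_1 r_2)$ by one (hence $t(r_1 r_2)$ by one) and, via the equivariant Legendrian realization principle together with the standard bypass picture, simultaneously decreases the number of half-twists of each component of $\Gamma|_{\mathrm{annulus}}$ by one. Both sides thus change in lockstep, so once the base case is established the identity $t(r_1 r_2) + 1$ follows for all stabilizations and destabilizations.

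The main obstacle is the precise base-case computation in the model: tracking how $\Gamma$ meets the invariant collar of $\{r_1, r_2\}$, pinning down the orientation conventions on the ``oriented annulus'' in the statement, and in particular identifying the source of the ``$+1$'' summand. I expect it to arise from the two endpoint contributions combining to a single additional wrap around the annular core, which must be verified by unwinding the characteristic foliation near each $r_i$ carefully and checking that the prescribed germ of $\xi$ at the real point contributes exactly a half-twist to the winding count on each side.
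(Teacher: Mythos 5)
Your overall architecture (explicit model plus an induction on the twisting) diverges from the paper's, and the induction step contains a genuine gap. The paper's proof runs through the \emph{complement} of the real arc: one excises a standard equivariant contact neighborhood $N$ of $r_1r_2$ (unique once $t(r_1r_2)$ is fixed), observes that $B-N$ is a $c_3$-real tight solid torus whose boundary slope is an integer, and then invokes the classification of $c_3$-real tight solid tori (only slopes $1/k$ with $k$ odd occur, cf.\ Corollary~\ref{egimler}) to force that slope to be $\pm 1$ and the complement to be unique. Uniqueness of $B$ and the ``$+1$'' in the half-twist count both fall out of this single slope constraint. Your proposal never examines this complementary solid torus, and that is precisely where the content of the lemma lives.

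Concretely, the stabilization induction in your third step does not work. The arc $r_1r_2$ is the fixed-point set of $c_{st}$, so it cannot be pushed off itself to acquire zig-zags; the only way to change $t(r_1r_2)$ is to modify the contact structure equivariantly in an interior neighborhood of the arc, and such a modification leaves the germ of $\xi$ along $\partial B$ --- hence the dividing set on the annulus --- untouched. So there is no local move under which ``both sides change in lockstep'': the asserted correlation between the interior twisting and the boundary winding is exactly the global tightness constraint to be proved, and it is enforced only by the complement $B-N$. (There is also a bookkeeping slip: a symmetric pair of zig-zags changes $tw$ by $2$, not by $1$.) Your first step is likewise fragile: the uniqueness theorem for the real tight ball is a rel-boundary statement, and balls with different $t(r_1r_2)$ are not equivariantly contact isotopic at all, so the boundary data must be fixed before citing it; once it is, you still owe the argument that each boundary winding is realized by exactly one value of $t(r_1r_2)$ --- which is again the complement argument. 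The explicit model in your second step is a correct and useful base case, but by itself it certifies only one instance of the formula.
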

The proof is an appropriate modification of that of \cite[Theorem~1.2]{oz}. It follows simply from the existence of a standard equivariant contact neighborhood $N$ of the real arc $r_1 r_2$ with a fixed twisting  (\cite[Corollary~2.5]{oz}) (with the extra condition that the contact structure is fixed in a neighborhood of $r_1$ and $r_2$) and the nonexistence of a $c_3$-real tight solid torus $B-N$ with boundary slope $l\in\Z$, except $l=\pm 1$ in which case there is a unique such solid torus.
(\cite[Theorem~4.1]{oz}).

These said, we label the real points  on the fixed boundary of a $c_1$-solid torus as $r_1,\ldots,r_4$, with $r_1 r_2$ and $r_3 r_4$ being the real arcs.

\begin{prop}
There are exactly two $c_1$-real tight solid tori with slope $1/k$, $k\in \Z$, up to equivariant isotopy. These are distinguished by the sign decoration on the boundary or equally on a horizontal disk. For both tori, the real arcs $r_1r_2$ and $r_3r_4$ have $t=-1$. 
\label{eksikey}
\end{prop}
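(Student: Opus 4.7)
The plan is to identify any $c_1$-real tight solid torus $V$ with convex boundary of slope $1/k$ and $\#\Gamma=2$ with a standard neighbourhood $(S^1\times D^2,\pm\eta_k,c_1)$ of its Legendrian core, then separate the two sign choices, and finally read off the twisting of the real arcs from the model.

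First I would make the core $L$ of $V$ equivariantly Legendrian with $tw(L)=k$. The core is $c_1$-invariant with $c_1|_L$ the hyperelliptic involution of $S^1$. Using equivariant Legendrian realisation on a $c_1$-invariant convex meridional disk, we may assume $L$ is equivariantly Legendrian with twisting matching the boundary slope. Theorem~\ref{eqLeg}(1) then identifies an equivariant convex tubular neighbourhood $U$ of $L$ with one of $(S^1\times D^2,\pm\eta_k,c_1)$, the sign being recorded by the decoration on the regions of $\partial U\setminus\Gamma$.

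Next I would show that the closure of $V\setminus U$ is equivariantly a trivial collar. It is a $c_1$-invariant minimally twisting tight thick torus with matching inner and outer slopes $1/k$ and $\#\Gamma=2$, hence a trivial product in the non-equivariant setting. Upgrading this to an equivariant trivialisation is the main technical point: any obstruction would produce a nontrivial $c_1$-invariant bypass on an equivariant convex annulus joining the two boundary tori, and peeling across such a bypass would yield a $c_1$-real tight basic slice, contradicting Proposition~\ref{nobasic}. Hence $V$ is equivariantly contactomorphic to $(S^1\times D^2,\pm\eta_k,c_1)$.

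By Remark~\ref{artieksic1} the two signs are not equivariantly contact isotopic, since the labelled real points on the boundary obstruct swapping them. This yields exactly the two claimed equivariant classes, distinguished by the sign decoration on $\partial V\setminus\Gamma$, or equivalently on a horizontal disk. For the twisting claim I would work directly in the model $\eta_k$: the real arcs $r_1r_2$ and $r_3r_4$ lie on the invariant meridional disks $\{y=0\}$ and $\{y=1/2\}$, where the contact structure restricts to $\ker(\pm du)$. A direct computation of the twisting of the Legendrian direction $\partial_v$ along the arc relative to the framing induced by the meridional disk, combined with Lemma~\ref{balltw} applied to a bisecting 3-ball (on whose boundary annulus the dividing curves go straight across), yields $t(r_1r_2)=t(r_3r_4)=-1$. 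The hardest step is the equivariant triviality of the collar complement, where the absence of a general $c_1$-bypass mechanism forces the indirect argument via Proposition~\ref{nobasic}.
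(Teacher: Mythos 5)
Your overall strategy (equivariant Legendrian core with maximal twisting, plus an equivariantly trivial collar) is genuinely different from the paper's, but it has a gap precisely at the step you yourself flag as the main technical point, and the patch you propose does not close it. First, the claim that the core can be made equivariantly Legendrian with $tw(L)=k$ is not justified: Legendrian realization on a convex meridional disk produces curves \emph{in} that disk, whereas the core is transverse to it, and in the non-equivariant theory the existence of a Legendrian core realizing the maximal twisting dictated by the boundary slope is a \emph{consequence} of the classification, obtained by thickening a very negatively twisted core through bypasses. That thickening mechanism is exactly what is unavailable $c_1$-equivariantly (Proposition~\ref{nobasic} and the corollary to Proposition~\ref{vardouble}), so assuming the maximally twisted core at the outset begs the question. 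Second, even granting such a core, the dichotomy ``either the collar is equivariantly trivial or it contains a $c_1$-invariant bypass, hence a $c_1$-basic slice'' is false: a nonrotative tight $T^2\times I$ with equal boundary slopes and $\#\Gamma=2$ on both sides contains no basic slice at all, and its potential nontriviality (holonomy, and in the real setting the finer question of whether the product structure can be made $c_1$-equivariant) is invisible to Proposition~\ref{nobasic}. The paper is explicitly cautious about exactly this kind of equivariant upgrading elsewhere (Theorem~\ref{l*Cp-1} exhibits an invariant torus that cannot even be made convex equivariantly), so it cannot be waved through here.

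For contrast, the paper's proof avoids thick tori altogether: after reducing to slope $-1$ by an equivariant $\tau_\mu$, it cuts the solid torus along an equivariant \emph{pair} of convex meridional disks $D$, $D'$ (interchanged by $c_1$ and disjoint from the real arcs), obtaining two $c_{st}$-real tight $3$-balls each containing one real arc. The uniqueness of the real tight $3$-ball relative to its boundary (Lemma~\ref{balltw}) then gives at most one structure per sign decoration on $D$ and simultaneously forces $t(r_1r_2)=t(r_3r_4)=-1$; the two structures are realized by $\pm\eta_{-1}$. If you want to salvage your route, you would need an independent equivariant argument that the collar is a vertically invariant neighborhood of the boundary, and at that point you would likely be reproving Lemma~\ref{balltw} in disguise.
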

\noindent {\it Proof:}
Since $\tau_{\mu}$ can be made $c_1$-equivariant, it suffices to prove the claim for slope $-1$. 
Let $S$ be such a solid torus with linear, equivariant dividing set $\Gamma$ of  two connected components  and with the slope of the Legendrian ruling made 0. Note that 
$\Gamma$ contains the real points and recall that there are two possible sign decorations on the regions $\partial S - \Gamma$.
We take an equivariant pair of disjoint, meridional disks $D$ and $D'$ in $S$ with Legendrian boundary and perturb them convex equivariantly relative to the boundary so that they reside away from the real points. The sign decoration on  $D$ and $D'$ (once oriented) are determined by that on $\partial S$ (and vice versa). A sufficiently small open neighborhood $\nu(D)$ of $D$ is isotopic to the standard tight  3-ball.
Excising $\nu(D)$ and its disjoint copy $\nu(D')$ from $S$, we get two
$c_{st}$-equivariant  3-balls $B_1$ and $B_2$ with connected dividing sets and a prescribed sign decoration on their boundary.
  
Up to equivariant contact isotopy fixing the contact structure on $\partial B_1$
there is a unique  $c_{st}$-real tight contact structure on $B_1$. It follows from Lemma~\ref{balltw} that  $t(r_1r_2)+1=0$ so that  $t(r_1r_2)=-1$ (with respect to the horizontal framing).   The same discussion applies for $B_2$.  Hence there are at most two $c_1$-real tight structures,  distinguished by the sign decoration on $D$.
These are in fact $\pm\eta_{-1}$ that appear in Theorem~\ref{eqLeg}.(1). 
Note that $c_2^*\eta_{-1}= -\eta_{-1}$.
\hfill $\Box$

\begin{prop}
Let $S$ be a $c_1$-real tight solid torus with real arcs $r_1r_2$ and $r_3r_4$ and with convex boundary on which the equivariant dividing set has exactly two components and slope $-m\in\Z^-$. The number of such $c_1$-real tight solid tori up  to equivariant isotopy relative to the fixed boundary is at most the $(m-1)^{st}$ Catalan number $C_{m-1}$.
These are distinguished by the configuration of the dividing set on a horizontal disk. For all, the real arcs $r_1r_2$ and $r_3r_4$ have $t=-1$
\label{eksiem}
\end{prop}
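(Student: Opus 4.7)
The plan is to extend the cutting argument from the proof of Proposition~\ref{eksikey}. First I would take a $c_1$-equivariant pair of disjoint meridional disks $D$ and $D'=c_1(D)$ in $S$, positioned away from the real points, with Legendrian boundary on $\partial S$, and perturb them equivariantly to be convex relative to $\partial S$. Cutting $S$ along $D\cup D'$ produces two $c_1$-invariant tight $3$-balls $B_+$ and $B_-$, each containing precisely one of the real arcs in its interior; on each, the involution $c_1$ restricts to have one-dimensional fixed set, making each a $c_{st}$-real tight ball.

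Next I would invoke Lemma~\ref{balltw} on each of $B_+$ and $B_-$. Up to equivariant contact isotopy fixing $\xi$ at the two real points, each tight real $3$-ball is uniquely determined by the dividing pattern on its boundary together with the twisting of its real arc. The dividing set on $D'$ equals $c_1(\Gamma_D)$ by equivariance of $S$, and the dividing sets on the annular portions $\partial S\cap \partial B_\pm$ consist of the $2m$ parallel arcs cut from $\Gamma_{\partial S}$ by the two meridional disks. Thus the equivariant isotopy class of $\xi$ on $S$ relative to $\partial S$ is encoded by $\Gamma_D$ alone, and tightness requires the assembled dividing set on each $\partial B_\pm$ to be a single embedded circle. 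The equality $t(r_1r_2)=t(r_3r_4)=-1$ then follows from Lemma~\ref{balltw}: the annulus $\partial B_+-(\nu(r_1)\cup\nu(r_2))$ carries $t(r_1 r_2)+1$ half twists, and matching this count against the pattern forced on $\partial B_+$ by $\Gamma_{\partial S}$ together with the meridional dividing arcs gives $t(r_1 r_2)+1=0$; the argument for $B_-$ is identical.

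The main obstacle is the combinatorial step bounding the admissible configurations of $\Gamma_D$. The question reduces to counting non-crossing matchings of the $2m$ points $\partial D\cap\Gamma_{\partial S}$ whose combination with the fixed parallel annulus arcs and with their reflected copy on $D'$ yields a single closed curve on each $\partial B_\pm$. This is a Temperley--Lieb style enumeration---non-crossing pairings on $2m$ points whose product with a fixed identity-type tangle closes into exactly one loop---and the count is the Catalan number $C_{m-1}$. Since the dividing set configuration on a horizontal disk is an equivariant contact isotopy invariant relative to $\partial S$, distinct admissible matchings yield distinct equivariant isotopy classes, giving the upper bound $C_{m-1}$.
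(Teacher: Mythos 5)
Your proof follows the paper's argument essentially verbatim: cut $S$ along an equivariant pair of convex meridional disks positioned away from the real points, apply Lemma~\ref{balltw} to the two resulting $c_{st}$-real tight balls to conclude that the structure is determined by the dividing configuration on $D$ and that $t(r_1r_2)=t(r_3r_4)=-1$ (since $\Gamma_D$ contributes nothing to the longitudinal twisting), and bound the number of configurations of the $m$ arcs on $D$ by $C_{m-1}$. The only divergence is in how $C_{m-1}$ is extracted from the non-crossing matchings of the $2m$ points: the paper normalizes up to rotation by pinning one innermost boundary-parallel arc, whereas you impose the single-circle tightness constraint on the assembled sphere dividing sets and assert a Temperley--Lieb count; either normalization gives the stated upper bound, though your enumeration is left as an assertion rather than carried out.
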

\noindent {\it Proof:}
We follow the notation in the previous paragraphs. The first paragraph in the previous proof applies verbatim here. Having said that, we investigate the possible dividing set configurations on $D$, which consists of properly embedded $m$ arcs, connecting $2m$ fixed points on $\partial D$. The dividing set configuration must be counted up to rotation. So one boundary parallel arc connecting neighboring points  can be fixed. Then the count is 
the number of expressions containing $m$ pairs of parentheses which are correctly matched; this number is nothing but $C_{m-1}$.
A sample is as in Figure~\ref{catalan}. 
Again up to equivariant contact isotopy fixing that boundary there is a unique  $c_{st}$-real tight contact structure on $B_1$. Whatever the dividing set configuration on $D$ (and its copy on $D'$) is, the contribution to the longitudinal component of the resulting dividing set on the $c_3$-real tight solid torus $B_1 - N$ is zero. Then Lemma~\ref{balltw} assures that $t(r_1r_2)=-1$. Similar conclusion applies for $B_2$.
\hfill $\Box$
\begin{figure}[h]
\begin{center}
\resizebox{7cm}{!}
{\input{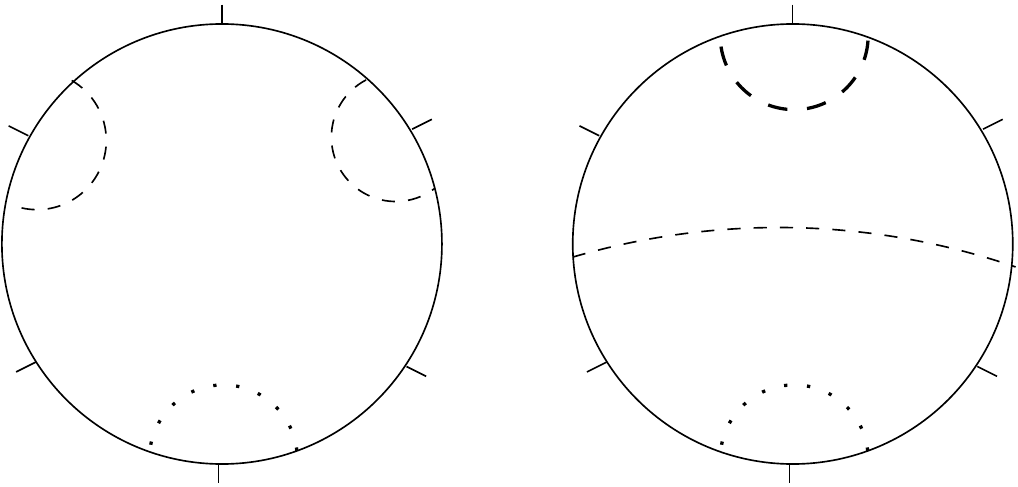_t}}
\caption{Possible dividing set configurations on a horizontal disk for $m=3$; here $C_2=2$.}
\label{catalan}
\end{center}
\end{figure}

One would attempt to get an alternative upper bound for the count in Proposition~\ref{eksiem} as follows. Instead of taking a convex $D$ and its equivariant  copy $D'$ one might have considered a pair of horizontal $c_1$-invariant disks $D_{12},D_{34}$ containing the real arcs $r_1r_2$ and $r_3r_4$ respectively. The problem here is that it is not apriori true that $D_{12}$ or $D_{34}$ can be perturbed convex equivariantly. This would be possible if and only if the Legendrian boundary of each half disk (for example one of the connected components of $D_{12} - r_1r_2$) has negative twisting with respect to the half disk, which is not for granted. Nevertheless if one assumes convexity, it can be observed that the count is still large and is related to the meandric numbers.

Another method to find an upper bound for the count could be the slicing operation in \cite{ho}. However the intriguing phenomenon  here
is that  slicing does not work $c_1$-equivariantly, simply because of the nonexistence of $c_1$-real tight basic slices.

\begin{prop}
There is no $c_1$-real tight basic slice.
\label{nobasic}
\end{prop}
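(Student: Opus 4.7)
The plan is to argue by contradiction: suppose a $c_1$-real tight basic slice $(T^2\times I,\xi,c_1)$ with dividing slopes $a,b$ of Farey distance $1$ exists. Since $c_1$ acts as $-I$ on $H_1(T^2)$, it commutes with every torus mapping class, so every matrix in $SL(2,\Z)$ is realized by a $c_1$-equivariant diffeomorphism; after such a normalization I may assume $(a,b)=(-1,0)$.

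In the coordinates $(x,y)\in\R^2/\Z^2$ the involution reads $c_1(x,y)=(1/2-x,-y)$, with four fixed points $(1/4,0),(3/4,0),(1/4,1/2),(3/4,1/2)$. I would take the $c_1$-invariant meridional annulus $A=\{x=1/4\}\times I\subset T^2\times I$ and Legendrian rulings of slope $\infty$ on both boundary tori. The Legendrian boundary of $A$ then has twisting $-1$ on each side, and the fixed locus of $c_1|_A$ is the pair of arcs $\{y=0\}\cup\{y=1/2\}$ splitting $A$ into two half-rectangles on each of which the Legendrian boundary has twisting $-1/2<0$. This is precisely the condition identified after Proposition~\ref{eksiem} that permits perturbing $A$ convex $c_1$-equivariantly. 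Since the real points of a real convex surface lie on its dividing set, the two dividing curves of $T_i$ each pass through two of the four $c_1$-fixed points; hence the four endpoints of $\Gamma_A$ on $\partial A$ are exactly the four fixed points $(0,0),(1/2,0),(0,1),(1/2,1)$ of $c_1|_A$.

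The heart of the proof is a short case analysis. Minimal twisting rules out closed components, so $\Gamma_A$ consists of two simple arcs pairing the four endpoints, giving three combinatorial possibilities: (i) two horizontal arcs at $y=0$ and $y=1/2$; (ii) two crossing arcs from $(0,0)$ to $(1/2,1)$ and from $(1/2,0)$ to $(0,1)$; (iii) two boundary-parallel arcs, one near each boundary circle. In case (i) the arcs lie in the fixed locus of $c_1|_A$ and separate $A$ into two regions that $c_1$ swaps. Since $c_1$ reverses the orientation of $A$ while reversing the orientation of $\xi$, a direct check shows it preserves the $\pm$ decoration on $A$; so the two swapped regions must share the same sign, contradicting that regions adjacent across a dividing arc carry opposite signs. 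In cases (ii) and (iii) each arc has both endpoints at $c_1$-fixed points lying on \emph{different} connected components of $\{y=0\}\cup\{y=1/2\}$. Any involution of $[0,1]$ fixing both endpoints is the identity, so a $c_1$-invariant simple arc with both endpoints fixed must lie entirely in the fixed locus; no such arc connects the required endpoints. Nor can $c_1$ swap the two arcs, since their endpoint sets differ.

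The delicate step I anticipate is the equivariant convex perturbation of $A$: as emphasized after Proposition~\ref{eksiem}, it is the negative twisting on the boundary of each half-rectangle, rather than on $\partial A$ alone, that must be verified; the choice of slope-$\infty$ rulings is exactly what produces the required $-1/2$ on each half. Once that is in place the remainder is combinatorial, forced by the rigidity coming from the fact that all four endpoints of $\Gamma_A$ are pinned to the fixed points of $c_1|_A$.
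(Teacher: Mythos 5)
Your argument founders at the step you treat as its anchor: the claim that the four endpoints of $\Gamma_A$ are the four fixed points of $c_1|_A$. Along a Legendrian boundary component $L=\partial A\cap T_i$ the points of $\Gamma_A\cap L$ must \emph{alternate} with the points of $\Gamma_{T_i}\cap L$, and it is the latter that sit at the real points: the real part of the invariant convex torus $T_i$ lies on $\Gamma_{T_i}$, and with your slope-$\infty$ ruling the curve $L=\{x=1/4\}$ meets $\Gamma_{T_i}$ exactly at $y=0,1/2$. Hence the endpoints of $\Gamma_A$ lie strictly inside the two arcs of $L$ between the real points and are \emph{interchanged} in pairs by $c_1|_L\colon y\mapsto -y$; they are pinned to nothing, and your trichotomy (i)--(iii) is not the correct case list. (Had $\Gamma_A$ really emanated from the real points it would collide with $\Gamma_{T_i}$ there, violating the alternation --- a warning sign that the equivariant convex perturbation of a single invariant annulus through its real arcs is itself problematic.) Moreover the configuration you would still need to exclude --- two arcs running from $T_0$ to $T_1$, which is precisely what the vertical annulus of a genuine basic slice carries --- is not killed by your sign argument, because that argument is backwards: averaging the transverse contact vector field $v$ makes it equivariant, and then $\alpha(v)\circ c_1=-\alpha(v)$, so $c_1$ \emph{swaps} $R_+$ and $R_-$ (this is exactly why real points lie on dividing sets in the first place). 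Two regions interchanged by $c_1$ and adjacent across a dividing arc carrying opposite signs is therefore consistent, not contradictory.

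For comparison, the paper's proof avoids the invariant annulus altogether: it takes a $c_1$-\emph{swapped} pair of disjoint meridional annuli $A,A'$ (so equivariant convex perturbation comes for free), normalizes the boundary slopes to $-1$ and $-2$ so that the twisting imbalance forces $\Gamma_A$ to consist of two crossing arcs plus one boundary-parallel arc on the outer side, and then cuts the slice along $A\cup A'$, edge-rounds, and checks that every admissible equivariant configuration produces an overtwisted solid torus. To salvage your route you would need both a justification that your invariant vertical annulus can be perturbed convex equivariantly at all, and a genuinely new argument against the two-crossing-arcs configuration; neither is supplied, so the proposal has a real gap.
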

\noindent {\it Proof:}
Consider a basic slice $R$ with a $c_1$-real contact structure and with invariant convex inner and outer boundary tori  $T_1$ and  $T_2$ respectively.
Without loss of generality, we may suppose that $T_1$ and $T_2$ are in  $c_1$-equivariant  standard form with boundary slopes $-1$ and $-2$ respectively and slopes of  Legendrian ruling 0 on both.

Take an equivariant pair of disjoint meridional annuli with Legendrian boundary.
Perturb the pair equivariantly relative to their boundary to obtain two convex annuli $A$, $A'$. The dividing set $\Gamma_A$ on $A$ has one boundary parallel arc with end points on $T_2$ and two arcs from $T_1$ to $T_2$. The dividing set $\Gamma_{A'}$  is a symmetric copy of $\Gamma_A$.

Keeping track of the dividing set on the boundary of any of the $c_1$-real solid tori bounded by $A$, $A'$, $T_0$ and $T_1$ (after edge rounding), the  possible configurations for the dividing sets all result with an overtwisted solid torus 
(See Figure~\ref{yok12}). \hfill $\Box$

\begin{figure}[t]
\begin{center}
\resizebox{10cm}{!}
{\input{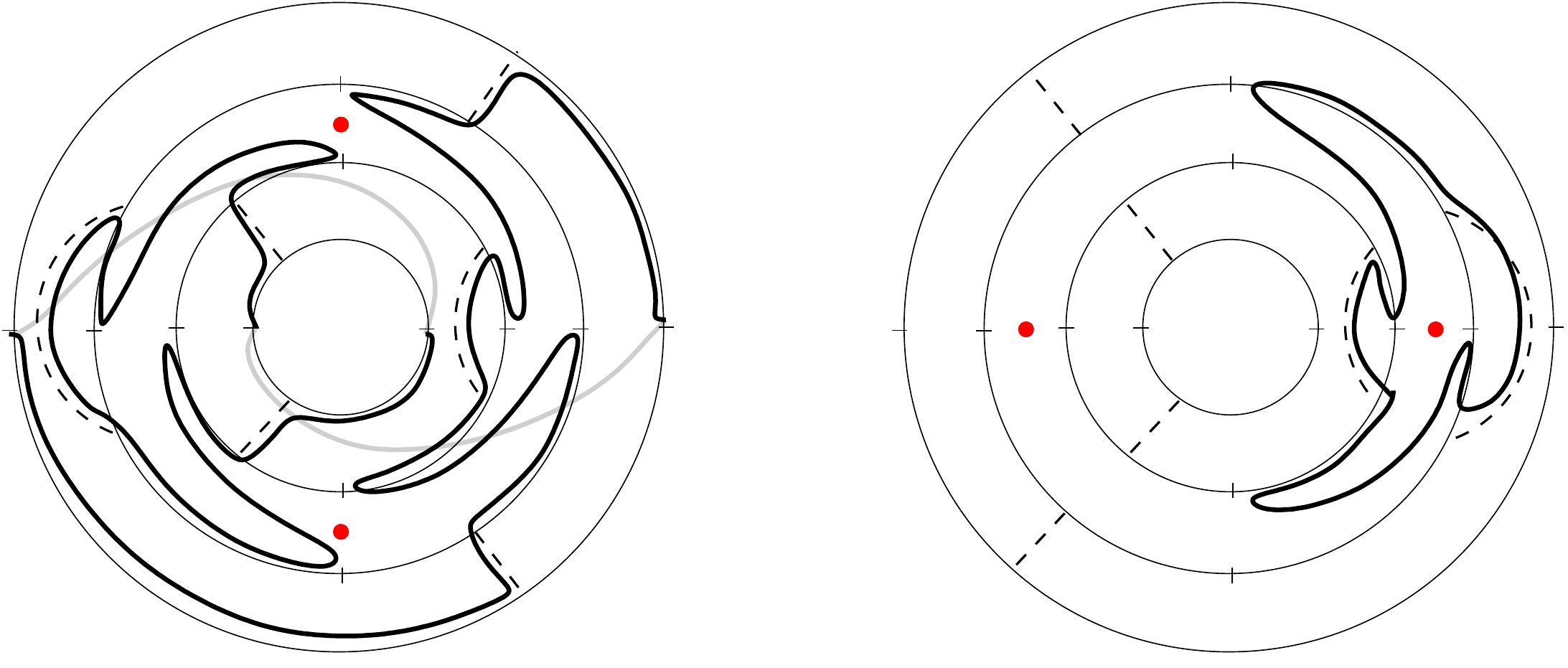_t}}
\caption{Both dividing set configurations on $A_0$ and $A'_0$ result with OT solid tori. The innermost and outermost annuli are $A$ and $A'$. The annulus in between is a portion of the outer boundary $T_2$ where the dividing curves rotate $-1/4$ turn. The annulus that is a part of $T_1$ is invisible in the figure; on it the dividing curves rotate $-1/2$ turn.}
\label{yok12}
\end{center}
\end{figure}

Now, we prove the following proposition for a  genuine equivariant  double slice, that is, 
one which cannot be peeled-off into two equivariant basic slices with convex boundary.

\begin{prop}
There are exactly two  genuine $c_1$-real tight double slices up to equivariant isotopy relative to the equivariant  convex boundary.  They are distinguished by the sign decoration on the boundary.
\label{vardouble}
\end{prop}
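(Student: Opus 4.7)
The plan is to adapt the cutting strategy from Propositions~\ref{eksikey} and~\ref{eksiem} to the thick-torus setting. Both $\tau_\mu$ and $\tau_\lambda$ are $c_1$-equivariant, so after a $c_1$-equivariant change of framing we may assume the double slice $R=T(s_0,s_1)$ has boundary slopes $(-1,s)$ with $\dist(-1,s)=2$; arrange the Legendrian rulings on both boundary tori to have slope $0$, $c_1$-equivariantly. Choose a pair of disjoint meridional annuli $A,A'$ interchanged by $c_1$, with Legendrian boundary lying on the rulings. Since neither $A$ nor $A'$ is itself $c_1$-invariant, we may perturb $A$ convex and take $A'$ to be its $c_1$-image, obtaining a $c_1$-symmetric convex pair without running into the subtlety pointed out after Proposition~\ref{eksiem}. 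Cutting $R$ along $A\cup A'$ produces two $c_1$-invariant solid tori $V_1,V_2$, each containing two of the four $c_1$-fixed arcs of $R$, with a prescribed equivariant convex boundary after edge rounding.

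The classification then reduces to enumerating $c_1$-symmetric dividing sets on $A$ (with $A'$ forced by $c_1$-symmetry) together with the compatible real tight fillings of $V_1$ and $V_2$. The twisting values on the inner and outer boundaries of $A$ determined by the slopes $-1$ and $s$ pin down the total endpoint count of $\Gamma_A$, and the $c_1$-symmetry between $A$ and $A'$ constrains the arrangement further. Once the equivariant convex boundary of each $V_i$ is fixed, Proposition~\ref{eksiem} gives at most one $c_1$-real tight filling per sign decoration; edge-rounding arguments in the spirit of the proof of Proposition~\ref{nobasic} (cf.\ Figure~\ref{yok12}) then eliminate those configurations of $\Gamma_A$ that would force an overtwisted $V_i$. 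Exactly two configurations survive, distinguished precisely by the sign decoration on the boundary of $R$.

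For existence, the two structures can be exhibited directly. Let $S$ be a $c_1$-real tight solid torus of slope $s$, supplied by Proposition~\ref{eksiem}, and let $K\subset S$ be a Legendrian copy of the core with $tw(K)=-1$. Its standard equivariant neighborhood $U$ is a $c_1$-real tight solid torus of slope $-1$; by Proposition~\ref{eksikey} there are two such $U$, distinguished by the sign decoration on $\partial U$. The complement $S\setminus U$ is then a $c_1$-real tight double slice $T(-1,s)$, and the two choices of $U$ yield two double slices differing only in the sign decoration on the inner boundary. The main obstacle I expect will be the combinatorial middle step: carefully enumerating the $c_1$-symmetric dividing set patterns on $A$ and $A'$ and ruling out every overtwisted configuration to confirm that precisely two admissible ones remain.
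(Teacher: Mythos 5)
Your counting strategy is essentially the paper's: cut the double slice along a $c_1$-equivariant pair of convex meridional annuli $A,A'$, edge-round, and reduce to $c_1$-real tight solid tori of slope $-1$, whose classification (two structures, distinguished by sign) then yields the count. However, you stop exactly where the content of the proof lies: the enumeration of the symmetric dividing-set configurations on $A\cup A'$ and the verification that all but one (up to rotation) force an overtwisted solid torus is the whole argument, and you explicitly defer it (``the main obstacle I expect\ldots''). The paper carries this out concretely (Figure~\ref{var13}), obtaining a single surviving configuration that produces a slope $-1$ solid torus; note also that the relevant filling statement is Proposition~\ref{eksikey} (slope $-1$, exactly two structures distinguished by sign), not Proposition~\ref{eksiem}, since after edge rounding the pieces have integral slope $-1$.

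There is also a genuine gap in your existence argument. You propose to exhibit the two double slices as $S\setminus U$ where $S$ is a $c_1$-real tight solid torus of slope $s$ with $\dist(-1,s)=2$. But at this point in the paper no such $S$ is known to exist: Proposition~\ref{eksiem} only gives an \emph{upper} bound $C_{m-1}$ for the number of such solid tori and asserts no existence (existence for these slopes is only established later via equivariant surgery, which itself uses the present proposition's consequences). As written your construction is therefore circular. The paper avoids this by building the double slice directly as a union of two ordinary Honda basic slices whose bypass disks are placed $c_1$-symmetrically, which requires no prior equivariant existence result. Separately, your claim that the two choices of $U$ in Proposition~\ref{eksikey} yield two non-isotopic double slices would still need an argument that the sign decoration on the inner boundary is an invariant of the complement rel its outer boundary; the paper sidesteps this by fixing the sign decoration on the boundary of the double slice from the outset and showing there is at most one structure for each choice.
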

\noindent {\it Proof:}
We  assume without loss of generality  that the inner and outer boundary slopes of the double slice $M$
are $-1$ and $-3$   respectively and that the 
Legendrian ruling has slope $0$ on both. Such a $c_1$-real tight genuine double slice indeed exists. It is nothing but a union of two usual basic slices of Honda, the by-pass disks of which are placed symmetrically. 

To count them, we proceed as in the previous proof. Keeping track of the dividing set on the boundary of any of the $c_1$-real solid tori bounded by $A$, $A'$, $T_0$ and $T_1$ (after edge rounding), the  possible configurations for the dividing sets all result with an overtwisted solid torus except the single one up to rotation (See Figure~\ref{var13}). There we get a $c_1$-tight 
solid torus with boundary slope $-1$.  There are  two such tori up to equivariantly contact isotopy 
relative to the boundary. These structures can be distinguished by the sign decoration on the boundary (or equivalently on, say, $A$).

\begin{figure}[t]
\begin{center}
\resizebox{5cm}{!}
{\input{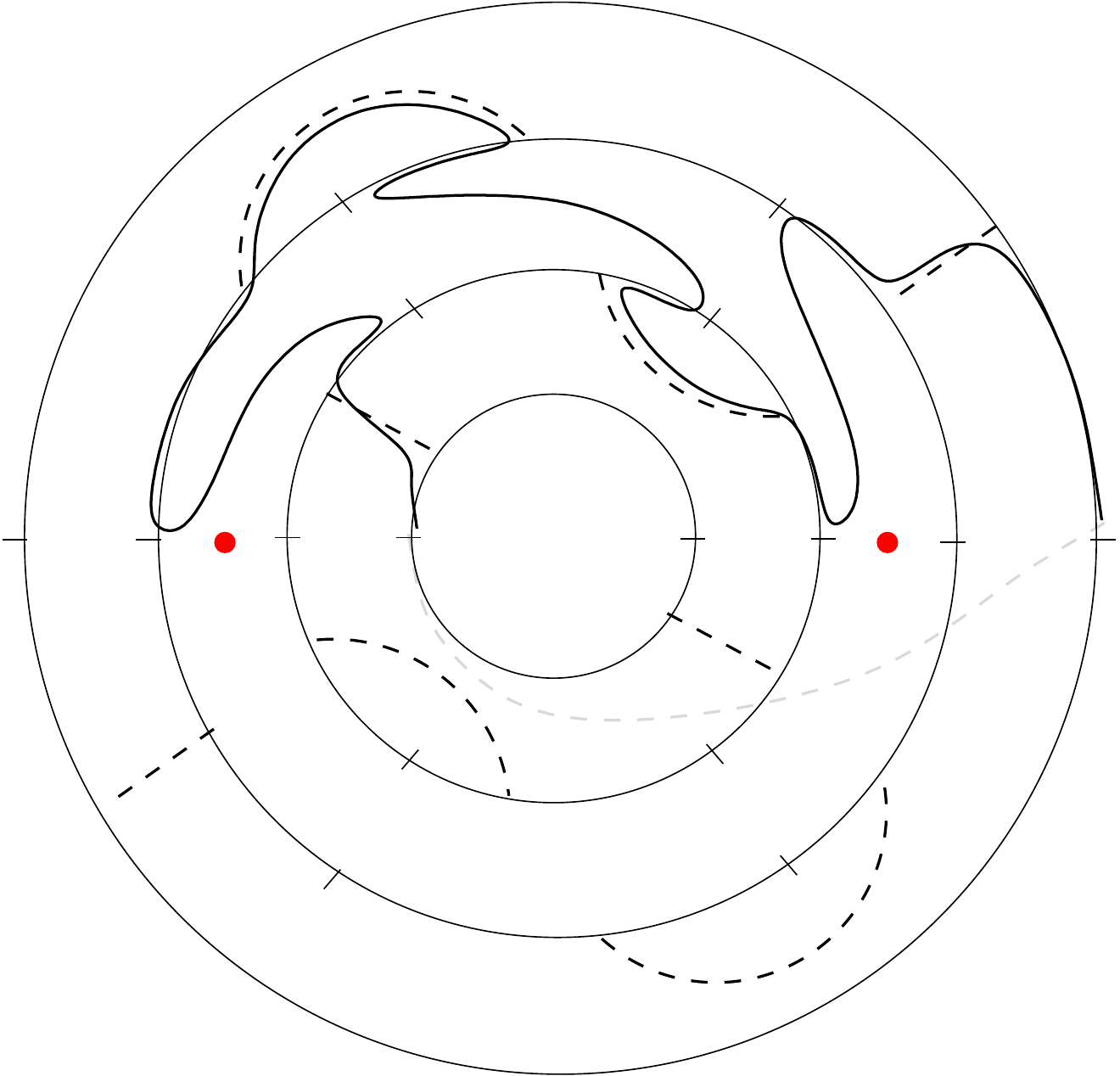_t}}
\caption{The only configuration on $A$ and $A'$ that does not lead an OT disk. It gives a $c_1$-real $(-1)$-solid torus.}
\label{var13}
\end{center}
\end{figure}

Having said these, fixing any of the two sign decorations on the boundary of the double slice, there is at most one $c_1$-real tight structure on the
double slice, obtained by gluing the  two $c_1$-real tight solid tori of slope $-1$.
 \hfill $\Box$

The discrepancy between the contact topological and the  $c_1$-equivariant cases is that in the former  inside a double thick torus there are convex tori of every intermediate slope \cite[Proposition~4.16]{ho}, which is not necessarily true in the latter. 
Let us note explicitly the following corollary of the two preceeding propositions.

\begin{coro}
$c_1$-equivariant bypass attachments come in $c_1$-equivariant pairs. There is no $c_1$-equivariant bypass attachment in a $c_1$-real tight genuine double slice. 
\end{coro}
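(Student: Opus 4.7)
The plan is to deduce both statements directly from Propositions~\ref{nobasic} and \ref{vardouble}, which have already been established.

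First, for the pairing statement, I would recall the standard fact that a bypass attachment along an arc of the dividing set of a convex torus $T$ creates a new convex torus $T'$ (one step away in the Farey graph), and the thick region cobounded by $T$ and $T'$ is a basic slice. If the attaching arc and the bypass half-disk are $c_1$-equivariant, then $T'$ is $c_1$-invariant and the thick region between inherits a $c_1$-real tight contact structure on a basic slice, directly contradicting Proposition~\ref{nobasic}. Hence a lone $c_1$-equivariant bypass cannot occur; any equivariant local change of the dividing set must be performed simultaneously with its $c_1$-image. The two bypasses together produce a thick region of Farey distance $2$, which is precisely the $c_1$-real tight genuine double slice furnished by Proposition~\ref{vardouble}.

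Second, for the non-existence statement inside a genuine double slice $M$, I would argue by contradiction. A $c_1$-equivariant bypass attachment on an invariant convex torus inside $M$ would, by the same reasoning as above, produce a $c_1$-invariant convex torus $T'$ at the intermediate Farey slope. The thick sub-slice cobounded by one boundary of $M$ and $T'$ is then a $c_1$-invariant basic slice carrying a tight $c_1$-real structure, ruled out by Proposition~\ref{nobasic}. Equivalently, the existence of such an intermediate $c_1$-invariant $T'$ would split $M$ equivariantly into two basic slices with convex boundary, directly contradicting the hypothesis that $M$ is \emph{genuine}.

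The proof is essentially bookkeeping once Propositions~\ref{nobasic} and \ref{vardouble} are in hand. The only point that requires care is verifying that the thick region cut out by an equivariant bypass half-disk is genuinely $c_1$-invariant rather than only invariant up to isotopy. This is automatic: a $c_1$-invariant bypass half-disk forces both bounding convex tori to be $c_1$-invariant, and the contact structure on the thick region between them is therefore $c_1$-invariant as well. No deeper obstruction is anticipated.
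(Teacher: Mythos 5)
Your argument is correct and is precisely the derivation the paper intends: the corollary is stated without an explicit proof as an immediate consequence of Propositions~\ref{nobasic} and~\ref{vardouble}, and your reasoning (a single $c_1$-invariant bypass would carve out a $c_1$-invariant basic slice, impossible by Proposition~\ref{nobasic}, so bypasses must occur in exchanged pairs; and inside a genuine double slice an equivariant attachment would yield an intermediate invariant convex torus, contradicting either Proposition~\ref{nobasic} or genuineness) is exactly the intended bookkeeping. The only convention worth noting is that ``bypass attachment'' here means a nontrivial, slope-changing one, so the intermediate layer is indeed a basic slice as you assume.
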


\section{Classification on $S^3$ and $\R P^3$}
\label{s3rp3}
\subsection{Real tight $S^3$}

\begin{theorem}
Up to isotopy through real tight structures there is a unique real tight $S^3$ with nonempty real part.
\label{S3}
\end{theorem}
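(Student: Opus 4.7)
The plan is to equivariantly split $S^3$ along an invariant Heegaard torus into two real tight solid tori of known types and apply the classifications of Section~3. By the resolution of the Smith conjecture, any orientation-preserving involution on $S^3$ with nonempty fixed set is smoothly conjugate to the standard complex conjugation, so the real part $K=\mathrm{Fix}(c)$ is an unknot. Because $K$ is real in $(S^3,\xi,c)$ and $\xi$ is $c$-antisymmetric, the tangent line to $K$ is pointwise $dc$-fixed and must therefore lie in $\xi$; hence $K$ is automatically Legendrian.

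By Theorem~\ref{eqLeg}(2), a sufficiently small standard equivariant convex tubular neighborhood $U$ of $K$ is equivariantly contactomorphic to $(S^1\times D^2,\eta_{tw(K)},c_2)$, and is determined up to equivariant contact isotopy by $tw(K)$. The complement $V=S^3\setminus\mathrm{int}(U)$ is a solid torus whose induced involution has empty real part, so $V$ is a $c_3$- or $c_4$-real tight solid torus with convex boundary and $\#\Gamma=2$. Theorem~\ref{anadolu} together with Corollary~\ref{egimler} then say that $V$ is a standard neighborhood of its Legendrian core, and that the boundary slope of $V$ in $V$-coordinates is of the form $1/k$ with $k$ odd (for $c_3$) or even (for $c_4$).

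The remaining step is to match the gluing. The boundary slope of $U$ is $-1/tw(K)$ in $U$-coordinates, and the Heegaard gluing for $S^3$ exchanges meridian and longitude; comparing this with the slope $1/k$ on the $V$-side, together with the Bennequin bound $tw(K)\le -1$ forced by tightness, singles out $tw(K)=-1$, in which case $V$ is $c_3$-real tight with boundary slope $1$. Both pieces are then uniquely determined relative to the common boundary (by Theorem~\ref{eqLeg}(2) on the $U$-side and by the relative-to-boundary corollary after Theorem~\ref{anadolu} on the $V$-side), and the Heegaard gluing is likewise determined. The main obstacle is the slope/gluing computation that pins down $tw(K)=-1$ and confirms the parity selecting $c_3$ rather than $c_4$; once that is settled, the relative uniqueness of each piece closes the argument, yielding the unique $c$-real tight structure on $S^3$.
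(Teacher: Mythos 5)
Your argument is correct and follows essentially the same route as the paper: excise the standard $c_2$-equivariant neighborhood of the real unknot (Theorem~\ref{eqLeg}(2)), identify the complement as a $c_3$-real tight solid torus, and use Theorem~\ref{anadolu} together with Corollary~\ref{egimler} and the Bennequin bound to force $tb=-1$, after which relative uniqueness of both pieces finishes the proof. The only cosmetic differences are that you keep the $c_4$ possibility alive until the slope parity eliminates it (the paper identifies the complement as $c_3$ directly) and that your slope convention differs by a sign; neither affects the argument.
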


\noindent {\it Proof:}
Recall that  up to isotopy through real structures there is a unique real structure $c_{S^3}$ on $S^3$ 
where the fixed point set is  an unknot $L$.  Let $H$ be an equivariant contact neighborhood of $L$. By Theorem~\ref{eqLeg}(2),
$H$ is isotopic to the real tight solid torus $\eta_{\,tb(L)}$
with $tb(L)\in \Z^-$.   
The complement of $H$ in $S^3$ is  a $c_{3}$-real tight solid torus with convex boundary of slope $tb(L)$.  
Thus by Theorem~\ref{anadolu} and Theorem~\ref{eqLeg}(3),  $tb(L)$ must be $-1$ and the proof follows.
 \hfill $\Box$

In the above proof  we use a genus-1 Heegaard splitting of $S^3$.  A similar proof can be 
produced using a genus-0 Heegaard splitting, in which case we apply \cite[Theorem~2.1]{os} twice.

\subsection{Real tight $\R P^3$} 
Let us consider $\R P^3$ as the link manifold $X_1^+$ of the $A_1$ singularity $\{x^2 - y^2 + z^2=0\}\subset \C^3$,
i.e. 
$$
X_1^{+}=S^5_{\varepsilon} \cap \{x^{2} - y^2 + z^2=0\}
$$
where $S^5_{\varepsilon}$ is a small 5-sphere  in $\C^3$ centered at 0 with radius 
$\varepsilon>0$. $X_1^+$ is naturally a real tight 3-manifold with the real and contact  structures being the standard ones induced respectively by the complex conjugation  and the standard symplectic structure on $\C^3$. 
We recall that  up to contact isotopy there is a unique tight structure on $\R P^3$ \cite{ho} and up to isotopy through real structures there is a unique real structure on $\R P^3$ with nonempty real part \cite{hr}. 

\begin{theorem}
\label{rp3}
Up to isotopy through real tight structures there is a unique real tight $\R P^3$ with nonempty real part. Any such is equivariantly contact isotopic to  $X_1^+$.
\end{theorem}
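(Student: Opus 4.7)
The plan is to mirror the argument of Theorem~\ref{S3}, decomposing $\R P^3$ equivariantly and invoking the classification machinery of Sections~\ref{c2soy}--\ref{c1soy}. First, I would use \cite{hr} to fix the real structure $c$ on $\R P^3$ with nonempty real part (unique up to isotopy through real structures) and realize it explicitly via $X_1^+$. A direct computation of $\{x^2 - y^2 + z^2 = 0\} \cap \R^3 \cap S_\varepsilon^5$ immediately gives $\mathrm{Fix}(c) = L_1 \sqcup L_2$, two disjoint circles sitting at $y = \pm\varepsilon/\sqrt{2}$, each of radius $\varepsilon/\sqrt{2}$ in the $xz$-plane.

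Given any real tight $(\R P^3, \xi, c)$, each $L_i$ is a real Legendrian knot; by Theorem~\ref{eqLeg}(2) its equivariant contact tubular neighborhood $H_i$ is equivariantly contact isotopic to $(S^1 \times D^2, \eta_{tb(L_i)}, c_2)$. Observing that $L_1, L_2$ arise as two distinguished orbits of a $c$-invariant Seifert fibration $\R P^3 \to S^2$ (inherited from the anti-Hopf $S^1$-action on $X_1^+$, whose two orbits meeting the real locus are precisely $L_1$ and $L_2$), the complement $W = \R P^3 \setminus (H_1 \sqcup H_2)$ is topologically a thick torus $T^2 \times I$. The action of $c$ on $W$ is free (the fixed set lies entirely in $H_1 \sqcup H_2$), preserves each $T^2$-slice, and restricts on each boundary component to the standard free $c_2$-action.

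The next step is to classify such $c_2$-real, freely acting tight thick tori. I would adapt the stripping-off argument of Section~\ref{c2soy}, relying on Theorem~\ref{vay} to preclude genuine $c_2$-double slices and on the correspondence with $c_3$-real tight thick tori from Lemma~\ref{oha}, to show that $W$ is rigidly determined up to equivariant contact isotopy by its boundary slopes. Combined with the admissible-slope constraint of Corollary~\ref{egimler} and the topological requirement $H_1 \cup W \cup H_2 = L(2,1)$, a case analysis then pins down $tb(L_1)$ and $tb(L_2)$, and hence the entire real tight structure. Existence and the identification with $X_1^+$ follow by directly reading off the invariants in the explicit algebraic model, yielding both assertions of the theorem.

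The main obstacle I anticipate is precisely the thick-torus classification step: the stripping-off machinery of Section~\ref{c2soy} is developed for $c_2$-real tight solid tori, and porting it to the free-acting thick torus $W$ requires care in ruling out extraneous $c_2$-equivariant bypass configurations and in bookkeeping the coordinate change between $H_i$ and $W$ so that the gluing constraint translates into a numerical constraint on $tb(L_i)$. A subsidiary point, also to be verified in the abstract setting, is the availability of the $c$-invariant Seifert fibration with $L_1$ and $L_2$ as fibers; for this, the explicit model $X_1^+$ should serve as a reliable guide.
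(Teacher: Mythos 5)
Your overall strategy---split $\R P^3$ along its real locus, apply the real neighborhood theorem (Theorem~\ref{eqLeg}(2)) to the fixed circles, and let the slope constraint coming from the gluing pin down $tb(L_i)$---is the same as the paper's, which likewise works with the genus-1 splitting whose cores carry the real locus. The structural difference is that you insert a middle thick torus $W$ with a free action and make its classification the crux of the argument, whereas the paper avoids $W$ entirely: it takes $H_1$ to be the standard neighborhood of $L_1$ and observes that the \emph{entire} complementary Heegaard handlebody is itself a real tight solid torus to which Theorem~\ref{anadolu} and Corollary~\ref{egimler} apply directly. This is also the quickest way to close the gap you flag: your $W\cup H_2$ is exactly that complementary solid torus, with the real knot $L_2$ as its core, so Theorem~\ref{anadolu} already forces it to be a standard neighborhood of $L_2$ with boundary slope of the form $1/n$; no standalone classification of tight thick tori with free involutions is needed (and note that the paper's stripping-off results of Section~\ref{c2soy} and Theorem~\ref{vay} are stated for thick tori arising inside solid tori, so ``adapting'' them to an abstract $W$ would require re-proving, not citing). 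What your write-up then still owes is the actual numerical step, which is where the theorem is really proved: if $s(\partial H_1)=1/m$, the gluing matrix $\begin{pmatrix} -1 & 0 \\ 2 & 1 \end{pmatrix}$ turns this into $-(2m+1)/m$ on the complementary side, and the only way this has the admissible form $1/k$ of Corollary~\ref{egimler} is $m=k=0$, i.e.\ slope $\infty$ on both sides (this holds whether one models the complementary involution as $c_2$, as your two-component real locus suggests, or as $c_4$ as in the paper's proof). Uniqueness of each standard piece then gives at most one structure, realized by $X_1^+$. As written, your proposal defers both this computation and the thick-torus step, so it is a correct plan---essentially the paper's plan---rather than a complete proof.
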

\noindent {\it Proof:}
The real structure $c$ may be described as follows  \cite{hr}: in the genus-1 Heegaard splitting of $\R P^3$ let $c$ act on the solid tori $H_1$ and  $H_2$ as $c_2$ and
$c_4$ respectively. (This is a type-$B$ real structure. See the next section for the definition.)
The fixed point set of $c$ is the union of the  core circles $L_i\subset H_i$
and the gluing map is $\begin{pmatrix} -1 & 0 \\ 2 & 1 \end{pmatrix}$. 
We may suppose that $H_1$ is the standard $c_{2}$-real contact neighborhood of $L_1$. 
Thus the dividing curve on $\partial H_1$ has slope $s_1=1/m$, $m\in\Z$, by Theorem~\ref{anadolu}, so that the slope on $\partial H_2$  is  $s_2=-(2m+1)/m$ too.
By Corollary~\ref{egimler} the only value for such a slope on the $c_4$-real $H_2$
is $\infty$ and $m=0$. There is a unique such $H_2$ which is $c_{4}$-real tight; that is the standard  $c_{4}$-real contact neighborhood of $L_2$. Hence there is at most 1 real tight $\R P^3$.  Of course $X_1^+$ is such a manifold.  
\hfill $\Box$

Let us observe that in this unique real tight $\R P^3$, the rational Thurston-Bennequin numbers $tb_{\Q}(L_1)$ and $tb_{\Q}(L_2)$ are both  $-1/2$; to see this it is enough to note that  the rational Seifert framings along $L_1$ and $L_2$ are given by $(-1,2)$ on both $\partial H_1$ and 
$\partial H_2$. This observation is in accordance with \cite{oz} and \cite{be}.

Recall that for $L(2,1)$, a type-$B$ real structure is equivariantly isotopic to a type-$A$. (See the next section.) Thus one could attempt to produce an alternative proof using a  genus-1 Heegaard splitting using $c_1$-equivariant solid tori. Again $s_1$ and $s_2$ would both be equal to $\infty$ or $-1$. Since both cases may occur in principle, we would conclude that
there are at most two real tight structures on $\R P^3$, which is a weaker result than the above.

\section{Real tight lens spaces of types $A$ and $B$} 
\label{AveB}

Here we start to classify the real tight structures on lens spaces $L(p,1)$ and $L(p,p-1)$  with $p>2$ and with nonempty real part.

Let $q=1,p-1$ and $M=(L(p,q),c)$ be  a real lens space with nonempty real part. We 
consider a genus-1 Heegaard decomposition of $M$ with handlebodies $H_1,H_2$.
By definition this decomposition is a $c$-real Heegaard decomposition  if and only if $c$ exchanges the handlebodies, i.e. $c$ is isotopic to a real structure of type $C$ or $C'$ in the notation of \cite{hr}. (See \cite{os} for an introduction of  real Heegaard decompositions.) In that case the gluing map is an orientation reversing involution of $\partial H_1$ hence is strongly equivalent to a linear involution 
$\phi=\pm\Phi$ where $\Phi=\begin{pmatrix} -q & q' \\ p & q\end{pmatrix}$; the sign $+$ for type C  and $-$ for type C$'$ \cite[Lemma~4.6]{hr}.

If $c$ maps each handlebody to itself then the pair $(c|_{H_1},c|_{H_2})$ is  isotopic  through real structures to $(c_1,c_1)$, $(c_2,c_3)$ or $(c_2,c_4)$  which correspond to the real structures of type $A$, $B$ and $B'$ respectively. For $L(2,1)$, all of these 5 types of real structures are isotopic through real structures. For $L(p,1)$, $p>2$, types $A$ and $C$ are equivalent and $B,B',C'$ are equivalent. For $L(p,p-1)$, $p>2$, types $A,C'$ are equivalent and $B,B',C$ are equivalent. 

Letting $X$ stand for one of the types $A$, $B$, $B'$, $C$ or $C'$, we denote by $l_X(p,q)$ the number of real tight structures on the real lens space $L(p,q)$ of type $X$ up to isotopy through real tight structures. 

Our main classification results for real tight lens spaces are

\begin{theorem}
\label{hudutB}
(a) $l_{B}(p,q) > 0$ if and only if $q=1$ or $q=p-1$.  \\
(b) $l_{B}(p,p-1) = 1$. This unique real tight structure is given by the isolated surface singularity $A^+_{p-1}$. \\
(c) $l_{B}(p,1) = 1$ for $p$ even; $l_{B}(p,1) \leq 1$ for $p$ odd.
\end{theorem}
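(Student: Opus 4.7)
My plan is to take a $B$-real Heegaard decomposition $L(p,q)=H_1\cup H_2$ in which $c|_{H_1}$ is isotopic to $c_2$ and $c|_{H_2}$ to $c_3$, and to reduce each piece to a standard form using the rigidity results already established. By Theorem~\ref{anadolu} the two solid tori are then forced to be the standard equivariant contact neighborhoods of their cores (the real core of $H_1$ and the equivariant Legendrian core of $H_2$), and by Corollary~\ref{egimler} the dividing-curve slopes are constrained to $s_1 = 1/m$ with $m\in\Z$ and $s_2 = 1/n$ with $n$ of the parity prescribed by whether $H_2$ carries $c_3$ or its $B'$-variant $c_4$.

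\textbf{Part (a).} The involutions $c_2|_{\partial H_1}$ and $c_3|_{\partial H_2}$ are the translations by $(1/2,0)$ and $(0,1/2)$ on the boundary torus. Writing the gluing $M\in GL(2,\Z)$ with first column $(-q,p)^T$, so that the glued manifold is $L(p,q)$, and demanding that $M$ conjugate one translation to the other, yields a numerical constraint. After absorbing the equivariant Dehn twists $\tau_\mu$ on $H_1$ and $\tau_\mu^2$ on $H_2$ that extend through each handlebody, this constraint is equivalent to $q^2\equiv 1\pmod p$, hence $q=1$ or $q=p-1$. When the direct $(c_2,c_3)$ splitting is parity-obstructed we pass, via the equivalences $B\sim B'\sim C$ of \cite{hr}, to a type-$B'$ or type-$C$ representative and repeat the argument. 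The converse direction is supplied by the explicit constructions below.

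\textbf{Parts (b) and (c): upper bounds.} Fixing $q=1$ or $q=p-1$, and using the slope values $s_1=1/m$, $s_2=1/n$ permitted by Corollary~\ref{egimler}, the compatibility with the lens-space gluing becomes a linear Diophantine equation in $m,n$. Its solutions are essentially unique modulo the extendible equivariant framings; together with the uniqueness in Theorem~\ref{anadolu}, this proves $l_B(p,q)\le 1$ in every case listed. \textbf{Existence.} For $q=p-1$, the Milnor link of the real algebraic isolated surface singularity $A^+_{p-1}=\{x^2-y^p+z^2=0\}$ carries its natural complex-conjugation real structure and complex-tangency contact structure, and direct inspection of the real part and of the induced action on a genus-$1$ Heegaard torus identifies it as a $B$-real tight $L(p,p-1)$; hence $l_B(p,p-1)\ge 1$, giving equality. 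For $q=1$ and $p$ even, a parallel real singularity or an explicit equivariant contact surgery diagram (in the spirit of the $\R P^3$ argument in the proof of Theorem~\ref{rp3}) supplies the example. For $q=1$ and $p$ odd no realization is currently known, so only the upper bound $l_B(p,1)\le 1$ is asserted.

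\textbf{Main obstacle.} The hardest step is the parity case in which the natural $(c_2,c_3)$ splitting is unavailable (in particular when $p$ is odd and $q=p-1$), so the uniqueness argument must be transported across the equivalences $B\sim B'\sim C$. In the type-$C$ representative the real involution swaps $H_1$ with $H_2$, so Theorem~\ref{anadolu} pins down only one handlebody and the contact structure on the other is forced by the involution; verifying that the resulting unique candidate genuinely assembles into a global real tight contact structure, and that this structure is equivariantly isotopic to the singularity realization, is the central technical difficulty.
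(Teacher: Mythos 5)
Your overall architecture for the upper bounds is the same as the paper's (Lemma~\ref{typeB}): split into solid tori $(H,c_2;H',c_j)$, invoke Theorem~\ref{anadolu} to force both pieces to be standard neighborhoods, use Corollary~\ref{egimler} to restrict the slopes to $1/m$ and $1/n$ with the appropriate parity, and let the gluing matrix pin down $m$ and $n$. But your proof of the ``only if'' in part (a) does not work. Demanding that the gluing matrix conjugate the boundary translation of $c_2$ to that of $c_2$, $c_3$ or $c_4$ imposes \emph{no} condition on $(p,q)$: the image of $(1/2,0)$ under $\begin{pmatrix}-q & q'\\ p & q\end{pmatrix}$ is always one of the three nontrivial half-integer translations mod $\Z^2$, so a type-$B$ real structure exists on every lens space and no congruence appears at this stage. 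Moreover, even if $q^2\equiv 1\pmod p$ did arise, it does not imply $q\equiv\pm1\pmod p$ (e.g.\ $q=3$, $p=8$). The actual obstruction is contact-topological, not real-topological: both boundary slopes must lie in $\{1/k\}$ by Corollary~\ref{egimler}, and the gluing for $L(p,q)$ carries $1/m$ to a slope of that form only when $q=1$ or $q=p-1$; this is exactly how the paper closes part (a).

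Your proposed fallback for the ``parity-obstructed'' cases --- passing to a type-$C$ or $C'$ representative and rerunning the argument there --- is both unnecessary and unworkable. Unnecessary, because for every parity there is a splitting in which $H$ carries $c_2$ and $H'$ carries whichever of $c_2$, $c_3$, $c_4$ the gluing dictates ($j=2$ for $p$ even; $j=4$ for $L(p,1)$, $p$ odd; $j=3$ for $L(p,p-1)$, $p$ odd), so Theorem~\ref{anadolu} always applies to both handlebodies directly. Unworkable, because in the type-$C$/$C'$ picture the argument needs the invariant Heegaard torus to be equivariantly convex, and Theorem~\ref{l*Cp-1} shows precisely that this fails for a $B$-real tight $L(p,q)$: $l^*_{C'}(p,1)=l^*_{C}(p,p-1)=0$. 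Finally, a small but relevant point on existence: your normal form $\{x^2-y^p+z^2=0\}$ is not the paper's $A^+_{p-1}=\{+x^p-y^2+z^2=0\}$ as a \emph{real} singularity; the sign pattern matters here, since the other real form $A^-_{p-1}$ produces a type-$A$, not type-$B$, structure, so one must actually verify which real form realizes type~$B$ (the paper does this in Section~\ref{q=p-1} by computing $tb_{\Q}$ of the real parts and matching with Section~\ref{buk}).
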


\begin{theorem}
\label{hudutA}
(a) $l_{A}(p,p-1)\geq 1$. The existent real tight structure is given by the isolated surface singularity $A^-_{p-1}$. \\
(b) $p-1\leq l_{A}(p,1)\leq C_{p-2}+1$.
\end{theorem}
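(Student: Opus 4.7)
The theorem splits into two parts, each requiring an existence/lower-bound construction; part (b) further requires an upper bound coming from the classification of $c_1$-real tight solid tori in Section~\ref{c1soy}. For (a), I would realize $L(p,p-1)$ as the link of the real isolated surface singularity $A^-_{p-1}$, defined by a polynomial in three variables with real coefficients whose quadratic part has the signature that distinguishes this singularity from $A^+_{p-1}$ (which was already used for the type-$B$ structure in Theorem~\ref{hudutB}(b)). Complex conjugation on $\C^3$ descends to an involution on the link, and the standard tight contact structure coming from complex tangencies is anti-symmetric. The content is verifying that this involution is of type~$A$ rather than type~$B$, which I would do by exhibiting a natural $c$-invariant Heegaard torus and computing the induced action on each handlebody.

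For the lower bound in (b), I would construct $p-1$ pairwise inequivalent type-$A$ real tight structures on $L(p,1)$ via equivariant contact surgery on the real tight $S^3$: starting from an equivariant Legendrian unknot, one stabilizes in $p-1$ distinct symmetric patterns and performs equivariant contact $(-p)$-surgery. Two alternative constructions are expected to yield the same $p-1$ structures, one by real open books on a symmetric multi-holed disk (Section~\ref{realob}) and one by real algebraic singularities (Section~\ref{q=p-1}). Distinctness is automatic: the underlying non-equivariant contact structures realized by these $p-1$ constructions are precisely Honda's $p-1$ tight contact structures on $L(p,1)$, hence pairwise non-isotopic, so the real tight structures are \emph{a fortiori} not equivariantly contact isotopic.

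For the upper bound in (b), I would fix a type-$A$ Heegaard decomposition $L(p,1)=H_1\cup_\phi H_2$ with $c|_{H_i}=c_1$, and take $H_1$ to be a standard $c_1$-real tight neighborhood of the real Legendrian core. The complementary $c_1$-real tight solid torus $H_2$ has boundary slope determined by $\phi$ and the twisting of the core; the generic case gives slope $-(p-1)$, so Proposition~\ref{eksiem} bounds the count of $c_1$-real tight structures on $H_2$ by the Catalan number $C_{p-2}$. One extra structure comes from the extremal case where the core twisting is maximal and $H_2$ itself reduces to a standard neighborhood (counted once by Proposition~\ref{eksikey} together with Corollary~\ref{egimler}), giving the total bound $C_{p-2}+1$. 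The main obstacle will be the upper bound: the absence of $c_1$-equivariant basic slices (Proposition~\ref{nobasic}) precludes the standard Honda-style slicing available in the non-equivariant setting and forces us to rely directly on the coarser count of Proposition~\ref{eksiem}, while carefully aggregating contributions from all admissible Legendrian isotopy classes of the real core.
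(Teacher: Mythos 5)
Your overall architecture matches the paper's: (a) via the link of $A^-_{p-1}$, the lower bound in (b) via equivariant Legendrian surgery diagrams realizing Honda's $p-1$ structures (with distinctness for free from the non-equivariant classification), and the upper bound in (b) via a type-$A$ genus-1 decomposition and the solid-torus counts of Propositions~\ref{eksikey} and~\ref{eksiem}. Two remarks on (a) and the lower bound. For the type determination in (a), the paper does not compute the action on an invariant Heegaard torus; it identifies the type either by the quotient criterion (an equivariant surgery diagram over the real $S^3$ along $c_1$-invariant knots has quotient $S^3$, and only type~$A$ does) or by matching $tb_{\Q}$ of the real part against the values computed in Section~\ref{buk}; your Heegaard-torus route is workable but note Theorem~\ref{l*Cp-1} as a warning that invariant Heegaard tori in this setting behave badly once convexity enters. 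Also, your claim that the $p-1$ structures on $L(p,1)$ are all recovered from real algebraic singularities is false: the virtually overtwisted ones are not Milnor fillable (Section~\ref{q=p-1}, citing \cite{lo}); only open books and surgery diagrams are available there.

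The genuine gap is in your upper bound for (b). You assert that the complement $H_2$ of a standard invariant neighborhood of the core has slope $-(p-1)$ ``in the generic case,'' with one extra extremal case. But if the invariant Legendrian core has twisting $k$, its standard neighborhood has boundary slope $1/k$ and the complement has slope $-(kp+1)/k$, which is a non-integer for $k\leq -2$; Proposition~\ref{eksiem} does not apply to it, and naively summing contributions ``over all admissible Legendrian isotopy classes of the real core,'' as you propose, would overshoot $C_{p-2}+1$. The missing idea is the positive counterpart to Proposition~\ref{nobasic}: by Proposition~\ref{vardouble} and its corollary, $c_1$-equivariant bypasses come in pairs forming genuine double slices, so the neighborhood of the core can be thickened equivariantly two Farey steps at a time. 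Hence every case collapses, according to the parity of $k$, to a terminal solid torus of slope $\infty$ or $-1$, whose complement has slope $\infty$ or $-(p-1)$, contributing $1$ (Proposition~\ref{eksikey}, rel fixed boundary) or $C_{p-2}$ (Proposition~\ref{eksiem}) respectively. You correctly flag that the absence of $c_1$-basic slices blocks Honda-style slicing, but without the double-slice thickening step your case analysis is not justified and the bound $C_{p-2}+1$ does not follow.
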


Here $A^{\pm}_{p-1}=\{\pm x^{p} - y^2 +  z^2=0\}\subset \C^3$.

We prove these theorems by first justifying in Section~\ref{ust} the upper bounds involved in the given (in)equalities and then in Section~\ref{cerrah} we explicitly construct distinct real tight lens spaces via contact equivariant surgery to prove the validity of lower bounds. See Figure~\ref{tumtablo} for the upper and lower bounds and the claimed equivariant surgery diagrams for each case. 

In Section~\ref{realob} we construct real open books for the case $q=1$. That construction displays in particular real contact Heegaard decompositions of higher genera.

Following that observation we ask in Section~\ref{CandC'} if there is any real contact Heegaard decomposition of genus~1 at all. We answer that question negatively for some cases and for the remaining cases we give upper bounds for their count.

In Section~\ref{q=p-1} we show that all  the examples for the case $q=p-1$ in Figure~\ref{tumtablo}  are real Milnor fillable.

\begin{figure}[t]
\begin{center}
 \includegraphics[width=12.5cm]{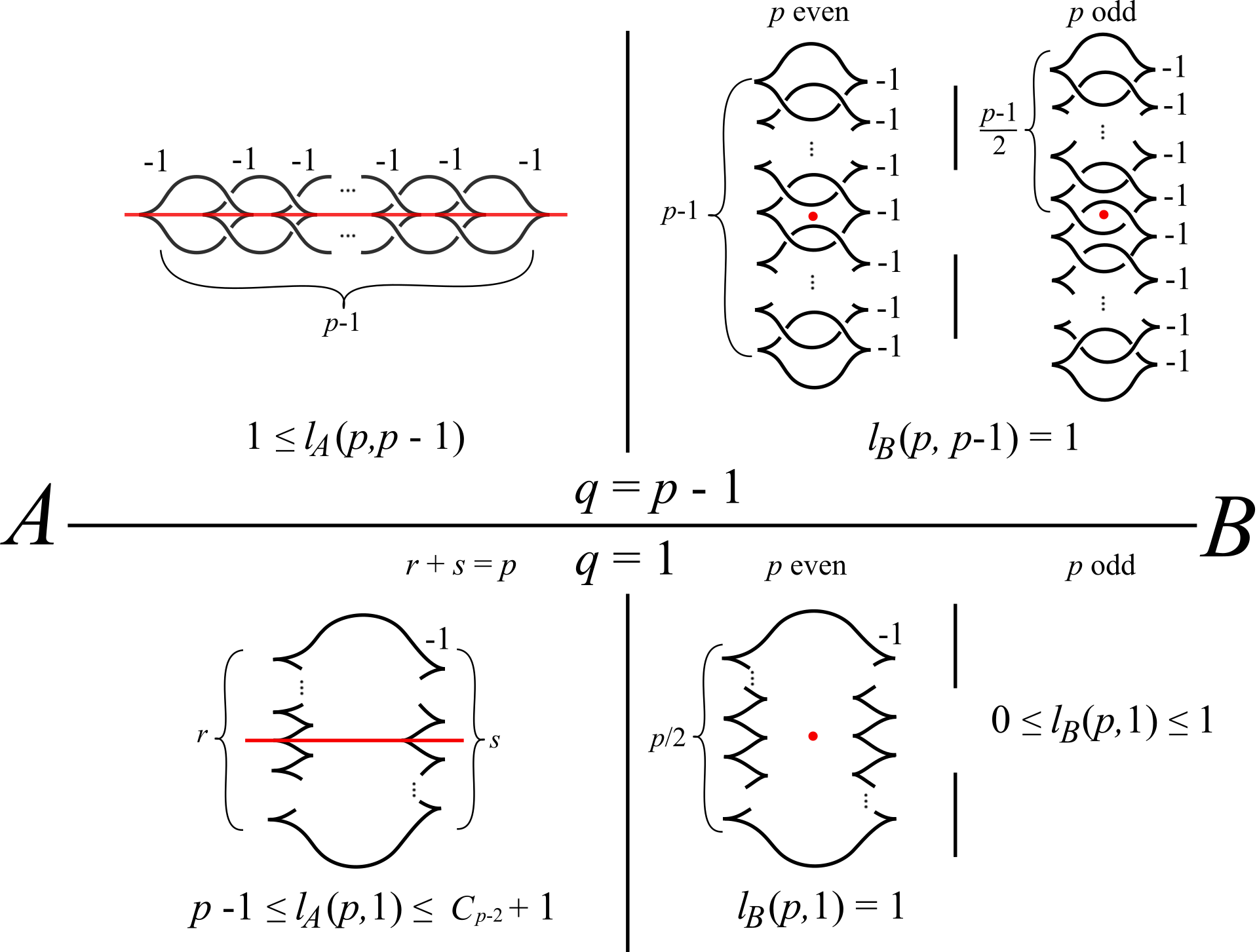}
 \caption{}
\label{tumtablo}
\end{center}
\end{figure}

\subsection{Upper bounds.}
\label{ust}

Here we prove that the upper bounds stated in Theorems~\ref{hudutB}-\ref{hudutA} are valid. 
First we do that for Theorem~\ref{hudutB}.

\begin{lemma}
\label{typeB}
For $q=1,p-1$, $l_B(p,q)\leq 1$. Otherwise $l_B(p,q)=0$.
\end{lemma}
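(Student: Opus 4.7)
The plan is to carry out the slope-matching argument used for $\R P^3$ in the proof of Theorem~\ref{rp3}, now on a general $L(p,q)$. By \cite{hr}, $L(p,q)$ admits a type-$B$ real structure at all only when $q=1$ or $q=p-1$, so in every other case $l_B(p,q)=0$ is immediate. Assume then $q\in\{1,p-1\}$ and consider a type-$B$ real tight structure together with its genus-1 Heegaard decomposition $L(p,q)=H_1\cup_\phi H_2$, in which $c|_{H_1}=c_2$, $c|_{H_2}=c_3$, and the canonical gluing is
$$\phi=\begin{pmatrix}-q & q' \\ p & q\end{pmatrix},\qquad pq'=1-q^2.$$

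Apply Theorem~\ref{anadolu} to each handlebody: $H_1$ is equivariantly contact isotopic to the standard $c_2$-real neighborhood of its real Legendrian core, and $H_2$ to the standard $c_3$-real neighborhood of its $c_3$-equivariant Legendrian core. By Corollary~\ref{egimler} the two convex boundary slopes are $s_1=1/m$ with $m\in\Z$ and $s_2=1/k$ with $k$ odd. The slope-matching condition $\phi(s_2)=s_1$ rearranges to
$$k(mp+q)=q'-mq,$$
and combining it with $pq'+q^2=1$ produces the key divisibility $(mp+q)\mid m$. This in turn forces either $m=0$ or $|m|(p-1)\le q$.

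The case analysis is then short. For $q=p-1$ one has $|m|\le 1$, and among $m\in\{-1,0,1\}$ only $m=-1$ yields an odd $k$, namely $k=-1$. For $q=1$ one is pushed to $m=0$, which gives $k=q'/q=0$; since $k$ must be odd this fails, so no solution exists and $l_B(p,1)\le 1$ holds trivially. In the surviving case $(m,k)=(-1,-1)$ for $q=p-1$, Theorem~\ref{anadolu} together with the sign-decoration flexibility of Remark~\ref{artieksi} (available in the $c_2$- and $c_3$-real settings) says that the two pieces, and hence the whole tight structure on $L(p,p-1)$, are determined up to equivariant contact isotopy by the pair $(m,k)$, giving $l_B(p,p-1)\le 1$.

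The main technical obstacle is the divisibility step: one has to read it cleanly off the matrix while keeping sign and parity conventions straight, and then convert ``unique matching slopes'' into ``unique tight structure'' via the uniqueness part of Theorem~\ref{anadolu} and the flexibility of Remark~\ref{artieksi}.
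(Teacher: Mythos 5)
Your overall strategy --- present the type-$B$ structure on a genus-1 real Heegaard decomposition, classify each handlebody by Theorem~\ref{anadolu} and Corollary~\ref{egimler}, and match slopes through the gluing matrix --- is the same as the paper's, and the arithmetic is essentially sound: from $k(mp+q)=q'-mq$ and $pq'=1-q^2$ one gets the cleaner identity $(mp+q)(kp+q)=1$, which forces $mp+q=\pm1$ and hence both the restriction $q\equiv\pm1\pmod p$ and the unique candidate slope pair. The genuine gap is your rigid assumption that $c|_{H_2}=c_3$. In the paper the restriction of the type-$B$ structure to the second handlebody is $c_2$, $c_3$ or $c_4$ depending on the parity of $p$ and on $q$ (types $B$ and $B'$ are equivalent on these lens spaces, and for $p$ even the core of $H_2$ lies in the fixed point set, so $j=2$; for $p$ odd the paper does not even decide between $j=3$ and $j=4$ a priori, but uses the contact geometry to do so). This is decisive for $q=1$: the unique slope solution is $s_1=s_2=\infty$, i.e.\ $k=0$, which is indeed excluded for a $c_3$-solid torus but is realized by a $c_4$-solid torus ($p$ odd) or a $c_2$-solid torus ($p$ even). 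You conclude ``no solution exists,'' i.e.\ $l_B(p,1)=0$; that formally implies $l_B(p,1)\le1$, but it contradicts Theorem~\ref{hudutB}(a),(c) and the explicit constructions later in the paper, so the inequality is obtained only by accident through a false intermediate claim. (For $q=p-1$ with $p$ even the same mislabelling occurs --- there $j=2$, not $3$ --- but slope $-1$ is admissible for both $c_2$ and $c_3$, so your uniqueness conclusion survives in that case.)

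A second, smaller issue is the ``otherwise $l_B(p,q)=0$'' clause. You outsource it to \cite{hr} as a statement about the nonexistence of type-$B$ involutions for $q\neq1,p-1$; the paper instead derives it from the same slope constraint (no pair of admissible boundary slopes matches unless $q\equiv\pm1\pmod p$), and your own identity $(mp+q)(kp+q)=1$ already yields exactly this. Theorem~\ref{hudutB}(a) is a statement about tight contact structures, not about smooth involutions, and the paper does not claim (nor need) that \cite{hr} rules out type-$B$ involutions for general $q$; you should prove this clause from your divisibility, which you have in hand. The final uniqueness step via Theorem~\ref{anadolu} and the sign flexibility of Remark~\ref{artieksi} is fine once the correct $c_j$ on $H_2$ is identified in each parity case.
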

\noindent {\it Proof:}
Assume $p>2$.
Similar to the proof of Theorem~\ref{rp3} we consider the real decomposition $(H,c_2; H',c_j)$ for the lens space $L(p,1)$.  If $p$ is even then $j=2$, otherwise $j=3$ or $4$.
By  Theorem~\ref{anadolu} we have $s=1/k$, $k\in\Z$ for the slope of $H$ so that $s'=-(kp+1)/k$. Again by Theorem~\ref{anadolu} this latter slope is possible only if 
$kp+1=\pm 1$ or $k=0$. Since $p\neq 2$, the only possible solution is when $k=0$ and $s=s'=\infty$. 
For $p$ even such $H'$ is unique by Theorem~\ref{anadolu} and Remark~\ref{artieksi}.
For $p$ odd, since there is no $c_3$-real tight solid torus with slope $\infty$, $j$ must be 4 and such $H'$ is unique.

As for the lens space $L(p,p-1)$, the same argument gives us the slope $s_2=(kp+p-1)/(kp+p-k-2)$ and the single possible situation, occurring for $kp+p-1=\pm 1$.
This has a unique solution $k=-1$ and that holds for any $p$. Thus we obtain $s=s'=-1$.  
If $p$ is even, then $j=2$ and there is a unique such $H'$ with slope $-1$.
If $p$ is  odd,  since there is no $c_4$-real tight solid torus with slope $-1$,  $j$ must be 3 and there is a unique such $H'$.

With similar considerations one can easily see that in order to get $s_1,s_2\in\{0,1/k:k\in\Z \}$ $q$ must be either $1$ or $p-1$. 
\hfill $\Box$ \\

The following lemma justifies the upper bound in Theorem~\ref{hudutA}(b).

\begin{lemma}
\label{typeA}
$l_{A}(p,1)\leq C_{p-2}+1$.
\end{lemma}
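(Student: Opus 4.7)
I plan to adapt the approach of Lemma~\ref{typeB} to the $c_1$-real setting. Fix a type-$A$ Heegaard decomposition $L(p,1)=H_1\cup H_2$, so that the real structure restricts to $(c_1,c_1)$, and let $s_1,s_2$ denote the boundary dividing slopes on $\partial H_1,\partial H_2$. As in the proof of Lemma~\ref{typeB}, the gluing relation for $L(p,1)$ forces $s_2=-p-s_1$.

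\emph{Step 1.} By equivariant Legendrian isotopy of the core $L_2\subset H_2$, I would arrange $H_2$ to be a standard $c_1$-real tight neighborhood of $L_2$. Theorem~\ref{eqLeg}(1) and Proposition~\ref{eksikey} then give $s_2=1/k$ for some $k\in\Z$, with $H_2$ one of two $c_1$-real tight tori distinguished by sign decoration. Consequently $s_1=-(kp+1)/k$.

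\emph{Step 2.} Since both $\tau_\mu$ and $\tau_\lambda$ can be realized $c_1$-equivariantly, the label-preserving subgroup $\Gamma(2)\subset SL(2,\Z)$ acts equivariantly on $H_1$, and I would apply such a change of basis to normalize the slope. The integer-slope cases occur at $k=\pm1$: $k=-1$ gives $s_1=-(p-1)$ and $k=+1$ gives $s_1=-(p+1)$. These two cases should be identified via equivariant destabilization of $L_2$, leaving the canonical configuration $s_1=-(p-1)$.

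\emph{Step 3.} With the normalized integer slope $s_1=-(p-1)$, Proposition~\ref{eksiem} yields at most $C_{p-2}$ distinct $c_1$-real tight structures on $H_1$ up to equivariant isotopy relative to the fixed boundary. Combined with the two sign-decoration options for $H_2$ and the identifications induced by ambient equivariant isotopies of $L(p,1)$ (cf.\ Remark~\ref{artieksi}), the count reduces to $C_{p-2}$ from the generic configuration, plus one additional structure coming from the degenerate configuration (e.g.\ the extremal Legendrian with $tw(L_2)=0$, yielding the meridional dividing slope), for a total of at most $C_{p-2}+1$.

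\emph{Main obstacle.} The central difficulty lies in the normalization of Step~2: one must verify that the non-integer-slope cases $|k|\geq 2$ do not contribute extra structures, i.e.\ that they can all be reduced to the integer case $s_1=-(p-1)$ via label-preserving equivariant diffeomorphism, and that the $k=\pm 1$ cases indeed coincide after destabilization. A delicate secondary point, analogous to Remarks~\ref{artieksic1} and~\ref{artieksi}, is the accounting of the sign decorations under gluing and ambient isotopy, together with isolating the precise source of the extra ``$+1$'' in the bound.
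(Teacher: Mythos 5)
Your overall frame---excise a standard $c_1$-neighborhood of a core and count the complementary solid tori via Propositions~\ref{eksikey} and~\ref{eksiem}---matches the paper's, but the argument breaks exactly at the point you flag as the ``main obstacle,'' and that obstacle is the entire content of the lemma. Step~2 cannot work as stated: a diffeomorphism of a solid torus must preserve the meridian, so only $\tau_\mu$ (not $\tau_\lambda$) extends over the handlebody, and the boundary slopes $-(kp+1)/k$ for different $k$ are genuinely inequivalent boundary conditions that no ``label-preserving equivariant change of basis'' reduces to $-(p-1)$. The correct mechanism, as in Honda's non-equivariant classification, is to \emph{thicken} the standard neighborhood by peeling slices off the complement, i.e.\ to move the Heegaard torus. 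This is where the equivariant theory intervenes decisively: by Proposition~\ref{nobasic} there is no $c_1$-real tight basic slice, while by Proposition~\ref{vardouble} and its corollary genuine $c_1$-double slices exist and bypasses come in equivariant pairs. Hence the neighborhood can only be thickened two basic slices at a time, the parity of the twisting $k$ is preserved, and one terminates at slope $\infty$ ($k$ even) or slope $-1$ ($k$ odd)---two branches that cannot be merged. The complement then has slope $\infty$ (a unique $c_1$-real tight solid torus rel fixed boundary, Proposition~\ref{eksikey}) or $p-1$ (at most $C_{p-2}$ structures, Proposition~\ref{eksiem}), giving $C_{p-2}+1$.

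In particular, your proposal is wrong on two specific points: the cases $|k|\geq 2$ are disposed of by equivariant double-slice thickening, not by a change of basis; and the two integer branches are \emph{not} identified by ``equivariant destabilization of $L_2$''---a single destabilization is precisely the move that Proposition~\ref{nobasic} forbids equivariantly. Consequently the extra ``$+1$'' is not a degenerate add-on to the Catalan count but the entire even-parity branch. (Your invocation of $\Gamma(2)$ is also misplaced for $c_1$, where both $\tau_\mu$ and $\tau_\lambda$ are already equivariant on the boundary; the real constraint is extendability over the solid torus, not equivariance.)
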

\noindent {\it Proof:}
We consider the decomposition $(H,c_1; H',c_1)$ for the lens space $L(p,1)$. We assume $s=1/k$, $k\in\Z, k\leq 0$.  
Recall that $H$ can be thickened  to a solid torus of slope $\infty$ by simply peeling off basic slices from $H'$ (see the proof of \cite[Proposition~4.17]{ho}). By Proposition~\ref{vardouble} and its corollary, two consecutive basic slices can be arranged to form a $c_1$-equivariant geniune double slice. Thus in the $c_1$-equivariant case,  $H$ can be thickened  to a $c_1$-equivariant solid torus of slope either $\infty$ or $-1$ depending on 
$k$ being even or odd respectively, leading $s'=\infty$ or $p-1$ respectively. In the former case, there is a unique $c_1$-real tight $H'$ relative fixed boundary (Proposition~\ref{eksikey})  while in the latter there are $C_{p-2}$ (Proposition~\ref{eksiem}).
Hence $C_{p-2}+1$ is an upper bound.
\hfill $\Box$ \\

\subsection{Lower bounds: construction via equivariant contact surgery.}
\label{cerrah}

Here we show that the lower bounds in Theorems~\ref{hudutB}-\ref{hudutA} are valid. We do that by constructing explicitly distinct examples via equivariant contact surgery.

Let $K$ be a $c_1$-invariant Legendrian knot in the standard real tight $S^3$. On the boundary of a standard equivariant contact neighborhood $U$ of $K$, let us denote the longitude determined by the contact framing by $\lambda$ and a meridian by $\mu$, as usual. With respect to $\lambda$ the slope on $\partial U$ is $\infty$. For an equivariant $+1$- and respectively $-1$-surgery along $K$, the $c_1$-tight solid torus to be glued back must have the boundary slope $+1$ and $\infty$ respectively, both of which exist (Proposition~\ref{eksikey}).
This discussion proves
that $c_1$-real contact $(\pm 1)$-surgery exists and is well-defined. In the notation of  \cite{ose} this is an equivariant 
surgery of type $1_1$, gluing back a $c_1$-solid torus. (For a more general discussion see \cite{ose}.)

It follows that the $c_1$-equivariant contact surgeries  in Figure~\ref{tumtablo} (and in Figures~\ref{tabloremark},~\ref{L511}-\ref{L513}) 
are well-defined. We would like to emphasize here that in each of those diagrams, since the real involution exchanges the signs on the boundary of a standard
neighborhood of the unknots and since the involution is equivariantly isotopic to the identity, the sign choice does not alter the final manifold up to $c_1$-equivariant contact  isotopy. 

Similarly let $K$ be a $c_4$-invariant Legendrian knot in the standard real tight $S^3$. A contact $\pm 1$ equivariant surgery along $K$, exists and is unique.  This is an equivariant  surgery of type $4_2$, gluing back a $c_2$-solid torus \cite{ose}.

If $K$ and $K'$ constitute an equivariant pair of disjoint Legendrian knots, an equivariant pair of contact surgeries
performed along $K$ and $K'$ is always possible. The upper right diagrams in Figure~\ref{tumtablo} contains such. \\

\noindent {\em Proof of lower bounds in Theorems~\ref{hudutB}-\ref{hudutA}.} 
Recall that the only real structure on a lens space having quotient $S^3$ is of type~$A$ \cite{hr}.
Hence 
all the diagrams in Figures~\ref{L511}-\ref{L513} determine real tight structures of type~$A$, justifying the bottom left part of Figure~\ref{tumtablo}. Their count proves the lower bound in Theorem~\ref{hudutA}(b).

Similarly,  the types of real structures obtained via the remaining equivariant contact surgery diagrams is determined as seen in Figure~\ref{tumtablo}.
Their existence proves all the remaining lower bounds. 
\hfill $\Box$

\begin{remark} Note that the contact surgery diagram in Figure~\ref{tabloremark} determines a real tight structure of type $A$ on $L(p, p-1)$ too when $p$ is even. We have not any other tool yet to distinguish this real tight structure from that given in top left diagram in Figure~\ref{tumtablo}. 
\end{remark}

\begin{figure}[h!]
\begin{center}
\includegraphics[width=3cm]{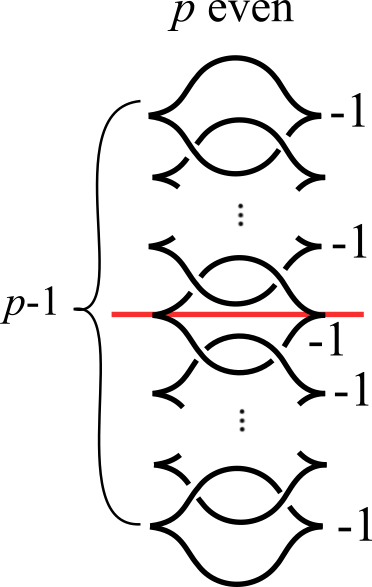}
\caption{ Type $A$ real tight structure on $L(p, p-1)$, $p$ even.}
\label{tabloremark}
\end{center}
\end{figure}

\section{Construction via real open books for $q=1$} 
\label{realob} 
In this section we first recall the definition of a real open book decomposition \cite{os2}. Then we explicitly construct real open books for the real tight contact structures  on lens spaces $L(p,1)$ with $p>2$ and with nonempty real part. 

An abstract real open book decomposition is a triple $(S, f, c)$ where $S$ is a compact surface with boundary, $c$ a real structure on $S$ and $(S,f)$ is an abstact open book with monodromy $f$ satisfying $f \circ c = c\circ f^{-1}$. A positive real stabilization of the abstract real open book $(S,f,c)$ is the abstract real open book $(S', \tilde{f} \circ \sigma, \tilde{c} \circ \sigma)$ where $S' = S \cup H$ and $H$ is either a 1-handle with $c$-invariant attaching region or a union of two 1-handles with their attaching regions interchanged by $c$. The new monodromy $\tilde{f}$ is the natural extension of $f$ over $S'$. Positive real stabilizations come in types I-IX (see \cite{os2}).

Any of the $p-1$ tight contact structures on $L(p,1)$ can be obtained via a Legendrian surgery, i.e. a contact $(-1)$-surgery along a Legendrian unknot $L$ with  ${tb}(L) = p-1$. The diagrams in Figures~\ref{L511}-\ref{L513} reveal that those surgery operations can be made equivariant. As noted at the end of Section~\ref{cerrah} they all give type-$A$  real structures.
Here we construct real open book decompositions $(S, f_i, A)$ 
(with abuse for $A$ in the notation) for 
each of those real tight lens spaces $(L(p,1), \xi_i, A)$, $i=1, \ldots, p-1$. The real 
pages are to be disks with $p-1$ holes. For this we start with the real open book  
$(D, {\tt id}, c)$ of the real tight $(S^3, \xi_{st},c_{st})$ where the real page $D$ is a disk (Figure~\ref{S3ob}). We then perform positive real stabilizations (when necessary) in order to obtain the page on which the required Legendrian knot would reside. 
\begin{figure}[h!]
\begin{center}
\includegraphics[width=2cm]{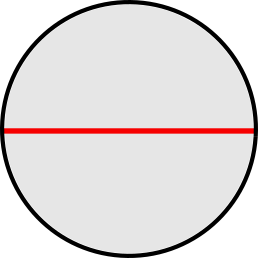}
\caption{Real open book $(D, {\tt id}, c)$ of the real tight  $(S^3, \xi_{st},c_{st})$.}
\label{S3ob}
\end{center}
\end{figure}

\subsection{Real open books for $L(5,1)$.} \label{realob51} As an illustration of the general case  we construct real open book decompositions $(S, f_i, A)$ for each real tight $(L(5,1), \xi_i, A)$, $i=1, \ldots, 4$, where the page $S$ is a disk with 4 holes. 

$L(5,1)$ has two universally tight contact structures. Surgery diagram for one of the universally tight contact structures, say $\xi_1$, and the real open book supporting the real tight $(L(5,1), \xi_1, A)$ are given in Figure~\ref{L511}. The monodromy $f_1$ of the real open book is $f_1= t_{a_1}t_{a_2}t_{a_3}t_{a_4}t_L$ where each $t_{a_i}$ is a Dehn twist along the belt circle $a_i$. Note that in this case we perform two sets of positive real stabilizations of type~III (i.e. two pairs of symmetric stabilizations; see \cite{os2}) on the real open book $(D, {\tt id}, A)$ of the real tight $S^3$. 
\begin{figure}[h!]
\begin{center}
\includegraphics[width=5cm]{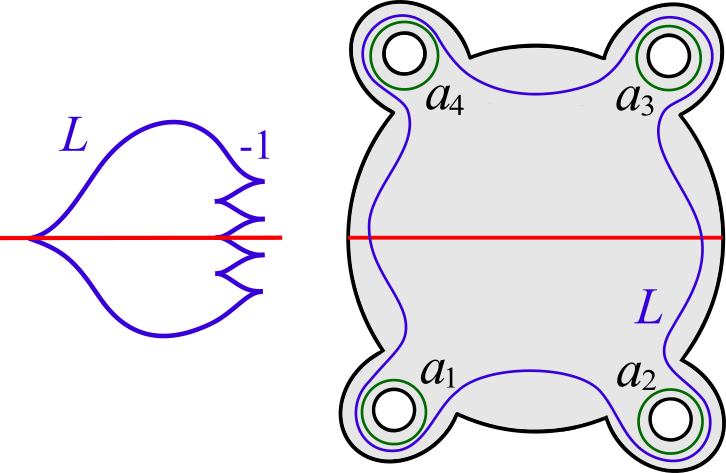}
\caption{Real open book  $(S, f_1, A)$ of real tight $(L(5,1), \xi_1, A)$.}
\label{L511}
\end{center}
\end{figure}

Surgery diagram for the other universally tight contact structure, say $\xi_4$, on the lens space $L(5,1)$ and the real open book  supporting the real tight $(L(5,1), \xi_4, A)$ are given in Figure~\ref{L514}.  Note that in this case  we perform first two positive real stabilizations of type~II on the real open book $(D, {\tt id}, c)$ of the real tight 
$S^3$. Next, we add a type-III positive real stabilization to this open book. The monodromy $f_1$ of the real open book is $f_4= t_{a_1}t_{a_2}t_{a_3}t_{a_4}t_L$ where the curves $a_i$ and the Legendrian knot $L$ are as in Figure~\ref{L514}.

\begin{figure}[h!]
\begin{center}
\includegraphics[width=5cm]{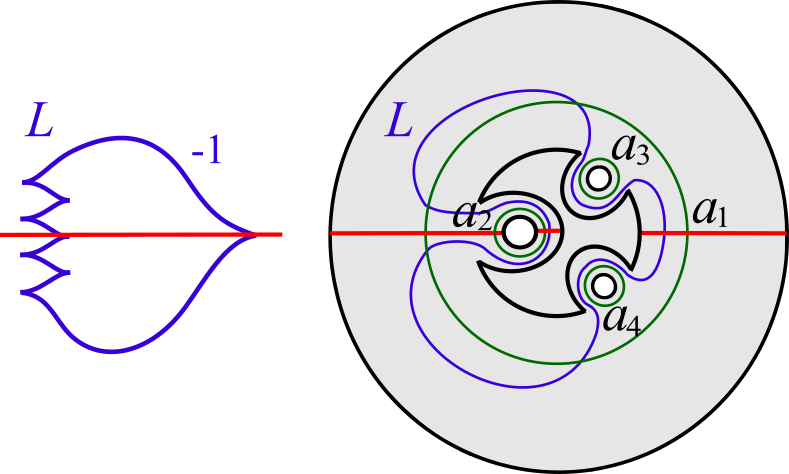}
\caption{Real open book $(S, f_4, A)$ of real tight $(L(5,1), \xi_4, A)$.}
\label{L514}
\end{center}
\end{figure}

The real open books for the remaining two virtually overtwisted contact structures, say $\xi_2$ and $\xi_3$, are given in Figure~\ref{L513}. Note that in each of these cases the real open book is constructed from the real open book in the intermediate stage of the previous paragraph by adding one positive real stabilization of type~III appropriately.

\begin{figure}[h!]
\begin{center}
\includegraphics[width=10cm]{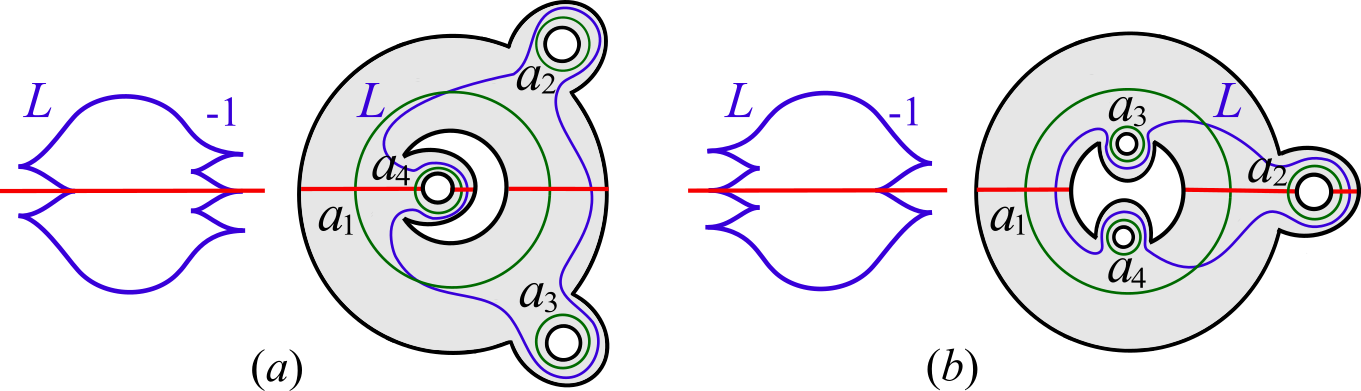}
\caption{Real open book  $(S, f_j, A)$ of real tight $(L(5,1), \xi_j, A)$: $(a) j=2$; $(b) j=3$.}
\label{L513}
\end{center}
\end{figure}

\subsection{Real open books for $L(p,1)$ for $p$ odd.} The construction in the preceding section generalizes in a straightforward manner to the lens spaces $L(p,1)$, $p$ odd. The real open books supporting the two real universally tight structures are given in Figure~\ref{Lp1o}. The real open book on the left (resp. on the right) corresponds to the real universally tight contact structure coming from a Legendrian surgery along a Legendrian knot having all its $p-2$ zigzags on its right (resp. on its left). In the former case all real stabilizations are of type~III while in the latter case two type-II positive real stabilizations are needed too. 

\begin{figure}[h!]
\begin{center}
\includegraphics[width=6.5cm]{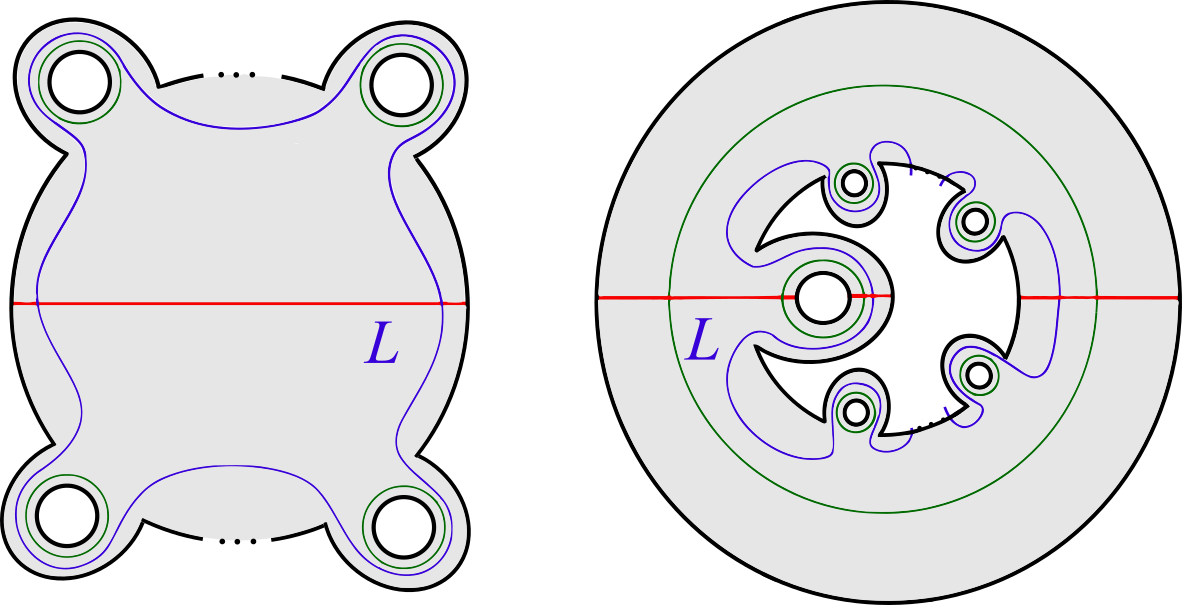}
\caption{Real open book decomposition of the universally tight real $L(p,1)$'s when $p$ is odd. }
\label{Lp1o}
\end{center}
\end{figure}

The real open books of the remaining  virtually overtwisted real tight lens spaces are given in Figure~\ref{Lp1odd}. The real open book on the left of Figure~\ref{Lp1odd} corresponds to a Legendrian knot $L$  where $L$ has odd number of zigzags on its left and even number of zigzags on its right (compare to $(L(5,1), \xi_2, A)$ in Section~\ref{realob51}). The real open book on the right of Figure~\ref{Lp1odd} corresponds to a Legendrian knot $L$ where $L$ has even number of zigzags on its left and odd number of zigzags on its right (compare to $(L(5,1), \xi_3, A)$ in Section~\ref{realob51}).  

\begin{figure}[h!]
\begin{center}
\includegraphics[width=7cm]{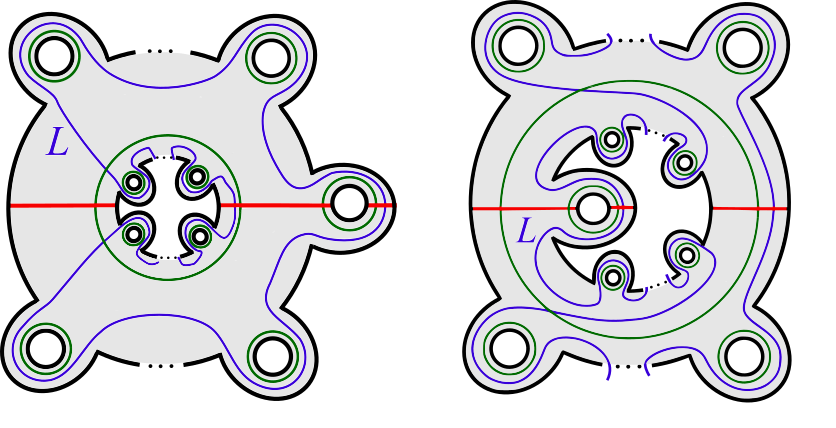}
\caption{Real open book of real virtually overtwisted tight $L(p,1)$, $p$ is odd.  }
\label{Lp1odd}
\end{center}
\end{figure}

\subsection{Real open books for $L(p,1)$ for $p$ even.} The analogue of the construction of real open books for $L(p,1)$ for $p$ odd extends to lens spaces $L(p,1)$ for $p$ even as expected. The real open books  supporting the two real universally tight lens spaces $L(p,1)$ for $p$ even are given in Figure~\ref{Lp1e}. 

\begin{figure}[h!]
\begin{center}
\includegraphics[width=6cm]{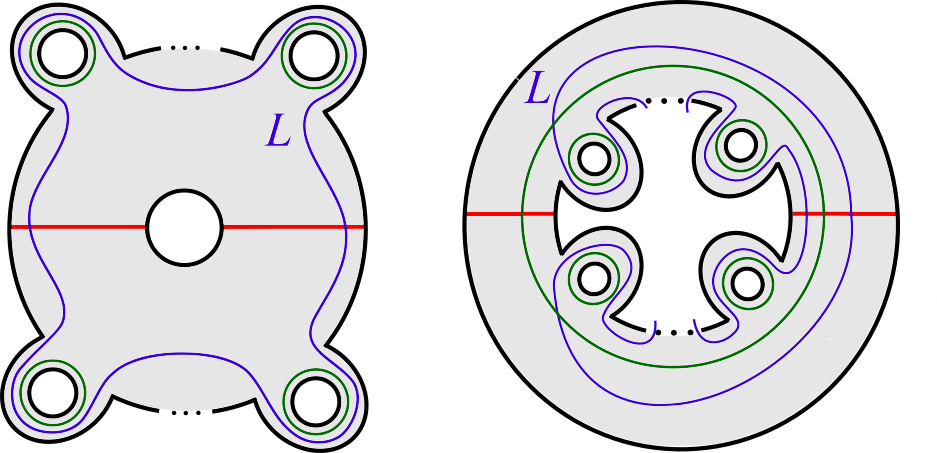}
\caption{Real open book  of the universally tight real  $L(p,1)$'s when $p$ is even.}
\label{Lp1e}
\end{center}
\end{figure}

The real open book  supporting the virtually overtwisted real tight lens spaces $L(p,1)$ for $p$ even is given in Figure~\ref{Lp1ev}. 

\begin{figure}[h!]
\begin{center}
\includegraphics[width=6cm]{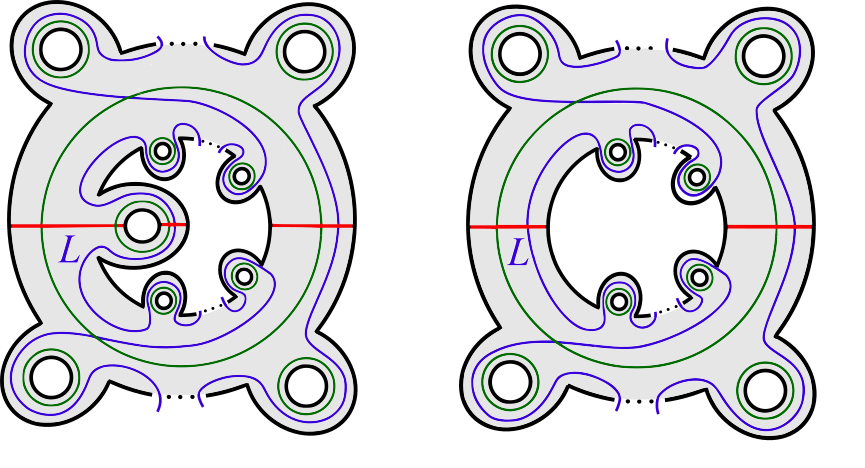}
\caption{Real open book  of real virtually overtwisted tight $L(p,1)$, $p$ is even. The Legendrian knot $L$ on the left/right has odd/even number of zigzags on its left and even/odd number of zigzags on its right.}
\label{Lp1ev}
\end{center}
\end{figure}

\section{Classification for type $C$ and $C'$}
\label{CandC'}
In the previous section, all the real open books that appeared determine real contact Heegaard decompositions of genera greater than 1.
When the lens space is given as a real Heegaard decomposition of genus 1 (i.e. exactly in case of types  $C$ or $C'$),
it might be the case that the Heegaard surface cannot be made equivariantly convex. We denote by  $l^*_X(p,q)$ the number of real tight structures on the real lens space $L(p,q)$ of type $X=C,C'$ up to equivariant contact isotopy with the condition that the lens space assumes a real contact Heegaard decomposition of genus 1.
We ask what $l^*_X(p,q)$ equals for $q=1,p-1$  and if that number is nonzero at all. At the end of this section we come up with a negative result: $l^*_{C'}(p,1)=0$ and $l^*_{C}(p,p-1)=0$ (Theorem~\ref{l*Cp-1}).

We start with determining upper bounds for the remaining cases.

\begin{prop}
\label{typeC}
For $p>2$, \\
(a) $\disp l^*_{C'}(p,p-1)\leq\left\{ \ba{ll}  4, & p \mbox{ even}; \\ 6, &  p \mbox{ odd.} \ea \right. $ \\
(b) $\disp  l^*_C(p,1)\leq\left\{ \ba{ll}  p, & p \mbox{ even}; \\ 2p-2, &  p \mbox{ odd.} \ea \right. $ \\
\end{prop}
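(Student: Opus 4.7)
\textit{Plan of proof.} The approach is to pin down the dividing-set slope on an equivariantly convex genus-1 Heegaard torus to a finite list using the projective action of $\pm\Phi$ on $\mathbb{Q}\mathbb{P}^1$, and then to bound, for each admissible slope, the number of compatible tight contact structures on $H_1$ via Honda's classification on the solid torus. Suppose the lens space $L(p,q)$ with $q\in\{1,p-1\}$ carries a real tight contact structure of the type in question, realised by a genus-1 real Heegaard decomposition $(H_1,H_2)$ whose Heegaard torus $T$ is equivariantly convex. Since $c$ exchanges the handlebodies, the restriction $c|_T$ is the orientation-reversing linear involution $\pm\Phi$ of $\partial H_1$, and the dividing set $\Gamma_T$ must be $c$-invariant as a set. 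All components of $\Gamma_T$ share one common slope $s$ on $\partial H_1$, so $s$ must be a fixed point of the induced projective action of $\pm\Phi$ on $\mathbb{Q}\mathbb{P}^1$.

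A direct computation using the normalisation $q^2+pq'=1$ (which makes $\pm\Phi$ an involution of $\mathbb{Z}^2$) shows that each of $+\Phi$ and $-\Phi$ has exactly two fixed slopes on $\mathbb{Q}\mathbb{P}^1$. For $L(p,p-1)$ with $\phi=-\Phi$ these are $s=-1$ and $s=-p/(p-2)$; for $L(p,1)$ with $\phi=+\Phi$ they are $s=\infty$ and $s=-p/2$. For each admissible $s$, specifying the real tight structure amounts to specifying a tight contact structure on the solid torus $H_1$ with convex boundary of slope $s$ and $\#\Gamma_T=2$, since the anti-contactomorphism $c\colon H_1\to H_2$ then forces the contact structure on $H_2$. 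By Honda's classification, the number of such tight solid tori rel boundary is given by the usual product formula on the continued-fraction expansion of $s$ in the meridional framing of $H_1$; together with a possible factor of two coming from the two sign decorations on $T\setminus\Gamma_T$ (compare Remark~\ref{artieksi}), summing the contributions over the two admissible slopes will yield the claimed upper bounds.

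The parity dichotomy in the statement reflects the fact that the second admissible slope is an integer exactly when $p$ is even ($-p/2$ is integral iff $p$ is even; likewise $-p/(p-2)$), whereas for $p$ odd it becomes a reduced fraction with strictly longer negative continued fraction. Honda's count is accordingly larger in the odd case, producing $2p-2$ (resp.~$6$) in place of $p$ (resp.~$4$). The principal obstacle I expect is verifying that this enumeration is genuinely an upper bound: one has to decide, in each case, whether the two sign decorations on $T$ remain distinct up to equivariant contact isotopy---as happens for $c_1$ by Remark~\ref{artieksic1}---or become identified as in the $c_{2,3,4}$-case of Remark~\ref{artieksi}; and one has to check that two distinct tight solid-torus structures on $H_1$ never produce equivariantly contact-isotopic real tight structures on $L(p,q)$ after being glued through $c$ and $\phi$. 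Only after these identifications are controlled can the Honda count be converted into the announced bounds on $l^*_{C'}(p,p-1)$ and $l^*_{C}(p,1)$.
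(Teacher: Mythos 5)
Your overall skeleton (equivariantly convex Heegaard torus, linear algebra on $\pm\Phi$ to pin down the dividing slope, Honda's count on $H_1$, a factor of $2$ for the sign decoration) is the same as the paper's, but there are two genuine problems. The first and more serious one is that you simply assert $\#\Gamma_T=2$ and feed that into Honda's classification. Nothing in the setup forces the invariant convex Heegaard torus to have division number one, and Honda's product formula only applies once $\#\Gamma=2$ is known. The bulk of the paper's proof is devoted precisely to this point: one peels off equivariant basic slices from standard neighborhoods of a Legendrian core $K\subset H$ and its mirror $K'\subset H'$, reduces to an invariant thick torus $T^2\times[0,2]$ containing $T=T_1$ with $s_0=s_2$ and $\#\Gamma_{T_0}=\#\Gamma_{T_2}=2$, and then rules out $\#\Gamma_{T_1}>2$ by producing, on an equivariant convex annulus joining $T_0$ to $T_2$ through $T_1$, a boundary-parallel dividing arc $\gamma$ whose union with $c(\gamma)$ bounds an overtwisted disk. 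Without some such argument your enumeration is not an upper bound at all.

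The second problem is your treatment of the slope. The dividing set is not merely required to be a fixed point of the projective action of $\pm\Phi$ on $\Q P^1$; it must be \emph{transverse} to the characteristic foliation of $T$ in standard form, and the real part of $c|_T$ (which is Legendrian, hence lies in the ruling direction) occupies the $(+1)$-eigendirection. Hence $\Gamma_T$ is forced into the $(-1)$-eigendirection alone: slope $-p/2$ for type $C$, $q=1$, and slope $p/(2-p)$ for type $C'$, $q=p-1$. Your plan to ``sum the contributions over the two admissible slopes'' would add the contribution of the $(+1)$-eigendirection (slope $-1$, resp.\ $\infty$) and would not reproduce the stated numbers $4,6,p,2p-2$; these come from a \emph{single} slope, namely $2\cdot 2$ and $2\cdot 3$ from the continued fractions $[-2,\dots,-2]$ and $[-2,\dots,-2,-3]$, and $2\cdot(p/2)$ and $2\cdot(p-1)$ from $[-p/2]$ and $[-(p+1)/2,-2]$. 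Finally, note that for an \emph{upper} bound your last worry is moot: if two choices of tight structure on $H_1$ (or of sign decoration) become equivariantly isotopic after gluing, the count only drops, so no identification needs to be controlled in that direction.
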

\noindent {\it Proof:}
Consider an abstract genus-1 $C$-real Heegaard decomposition $(H,H';c)$ of the lens space $M=L(p,q)$. Set $T=\partial H$.
(Sometimes we view $H$ embedded in $M$.) Since we are interested in computing $l_X^*$, we assume $T$ is convex in $M$.  $T$ may be assumed to be in standard form 
as a convex torus. 
The real part (and the  characteristic foliation) on $T$ and the invariant dividing set $\Gamma$ are  determined as linear sets by the eigenvector of  $\phi$ that corresponds to the eigenvalue $+1$ and $-1$ respectively.
For type $C$ and $q=1$ the $(-1)$-eigenvector  of $\phi$ is $(2,-p)$ and the slope of $T$ is 
$$
s=-p/2=
\left\{ \ba{ll}  \left[-p/2\right], & p \mbox{ even}; \\ 
\left[-(p+1)/2,-2\right], & p \mbox{ odd.} \ea \right.
$$
For type $C'$ and $q=p-1$ the $(-1)$-eigenvector is $(2-p,p)$ and 
$$s=p/(2-p)=\left\{ \ba{ll} -(p/2)/(-1+p/2)=\left[-2,-2,\ldots,-2\right], & p \mbox{ even}; \\ \left[-2,\ldots,-2,-3\right], &  p \mbox{ odd.} \ea \right.$$

The number of connected components of $\Gamma$ is an even positive integer; we denote this number by $\# \Gamma$.
If we show that $\# \Gamma=2$ then the  proof follows. Indeed in that case, for type $C$ and $q=1$ we read off from the slope expressions above that the number of tight contact structures on the solid torus  $H$ up to contact isotopy relative boundary  is $-p/2$ for $p$ even and  $((p+1)/2-1)\cdot 2=p-1$ for $p$ odd \cite{ho}.
For type $C'$ and $q=p-1$, we have  that number   $2$ for $p$ even and $3$  for $p$ odd. Note that  $H'$ is an exact copy of $H$. Since, as we have already noted in Proposition~\ref{eksikey}, 
the sign assignment on the regions of convex $T$ determines a priori distinct real tight structures on $M$, the numbers above for the possible tight structures must be multiplied by 2.

Now let us  discuss $\# \Gamma$. 
Consider a Legendrian core $K$ of $H$ and its equivariant copy $K'$ in $H'$. A sufficiently small standard neighborhood $\nu(K)\subset H$ of $K$ with convex boundary has  $\#\Gamma=2$  (same situation for the equivariant copy).
Suppose there is a bypass disk $D$ on $\partial\nu(K)$. Let 
$D\cap \mathrm{fix}(c)\neq\emptyset$. Then $D$ can be deformed so that the intersection contains a single point and further it can be made smaller such that it avoids the real point. 
Finally, if   still
$D\cap T\neq\emptyset$ then $T$ can be pushed further (and $H$, $H'$ and the real structure are modified accordingly) so that the intersection becomes empty.  
Now we can peel off a basic slice in $M-\nu(K)$ staying in $H$. Equivariantly we peel off a basic slice in $M-\nu(K_2)$ staying in $H'$ too.
Thus, as in \cite{ho}, we guarantee that the {\em inner} boundary of the closure of $H-(\nu(K)\cup \mbox{ (basic slices)})$ (and of the equivariant copy) is convex with $\#\Gamma=2$. 
In this manner  peeling of basic slices equivariantly we obtain the remaining part  $P=M-(\nu(K)\cup\nu(K')\cup\; \mathrm{ (all\;basic\;slices)}$, which is a $c$-real tight manifold with convex boundary. We may suppose that  $P \cong T^2\times [0,2]$; 
$c(T_1)=T_1$, reversing the orientation;
$c(T_t)=T_{2-t}$, reversing the orientation;
$T_0$ and $T_2$ are convex with $\#\Gamma_j=2$. 
Moreover $s_2=s_0$; in fact
it is obvious that $dist(s_2,s_0)\leq 1$. If it is 1, there is a bypass disk on $T_0$ not touching $T$ as above and a symmetric bypass disk on $T_2$. Then the surfaces $T'_0$ and $T'_2$ obtained after bypass attachment have slopes $s_2$ and $s_0$ respectively. However $T'_2$ is between $T'_0$ 
and  $T_2$, both of which have slope $s_2$. This contradicts with the fact that all the thick tori in
this setup are minimally twisting \cite{ho}.

Now for a contradiction let us assume  $\#\Gamma=\#\Gamma_{T_1}=2u>2$. 
In the words of \cite{ho}, this $T^2\times [0,1]$ is a thick torus increasing the torus division number $\#\Gamma/2$.
Of course, this thick torus has its equivariant copy $T^2\times [1,2]$ in $H'$.

We will observe that such an equivariant tight thick torus $T=(T^2\times [0,2],c,\xi)$ 
that alters the torus division number may not exist. 
Let the fixed point set be parallel to $f_1=(a,b)$ on $T_1$; thus $c(f_1)=f_1$.
Furthermore we assume that the slope of the characteristic foliation on every $T_k$ is $b/a$.
We denote by $A$ the equivariant convex annulus bounded by $f_0\cup f_2$, containing $f_1$. 
Now, since $s_0=s_1$,
the cardinality
$|\Gamma_{T_j}\cap A|$ are the same for $j=0,2$ and 2 more for $j=1$. 
Since $\xi$ is nonrotative, 
$\Gamma_A$ must have at least two dividing arcs which go across from $T_0$ to $T_1$ and therefore 
from $T_2$ to $T_1$ (see \cite{ho}). Then there must be a dividing arc $\gamma$ on $A$ parallel to $A\cap T_1$.
the curve $\gamma$ and its symmetric copy $c(\gamma)$ bound an overtwisted disk on $A$ together, which
contradicts the fact that $\xi$ is tight (see Figure~\ref{incrno1}). \hfill $\Box$
\begin{figure}[h]
\begin{center}
\resizebox{5cm}{!}
{\input{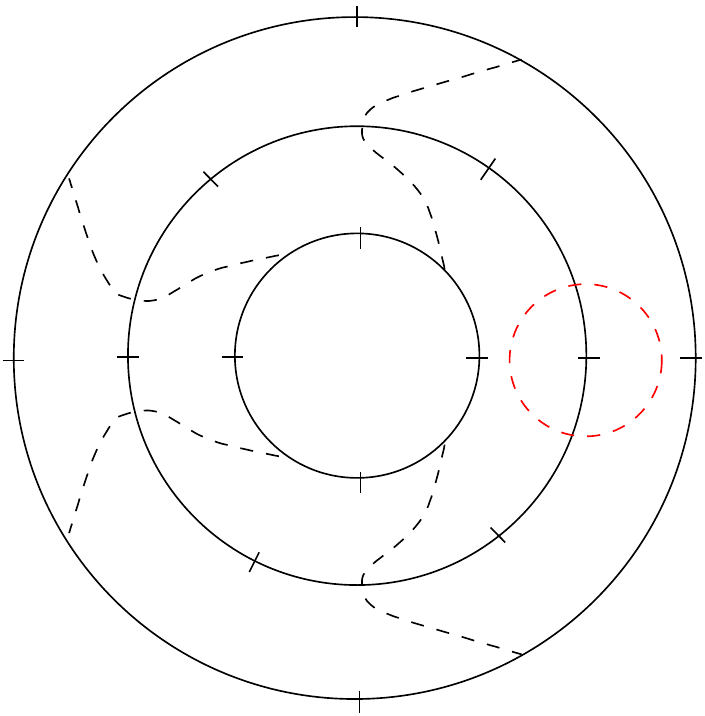_t}}
\caption{The proof that $\#\Gamma=2$ on the equivariant convex Heegaard torus.}
\label{incrno1}
\end{center}
\end{figure}

\subsection{Twisting numbers.} 
\label{buk}
One way to distinguish the real tight structures (up to equivariant contactomophisms) is to calculate the rational twisting number $tb_{\Q}$ of the Legendrian real parts. Here we compute that number explicitly for the real contact  structures for types~$B$, $C$ and $C'$ in all existing cases.

(type~$B$) Consider the proof of Lemma~\ref{typeB} and let $q=p-1 $. When $p$ is even, the fixed point set is the union of the cores $L_1$ and $L_2$ of $H_1$ and $H_2$ respectively. Otherwise fixed point set is $L_1$.  For each real knot in each case, one can compute that $tb_{\Q}=-1/p$. 
Recall that the slopes $s_1$ and $s_2$ are $-1$. Now, for instance, when $p$ is even, $p L_1$ bounds a topological disk $D$ 
since $ (1-p,p)\mapsto (1,0)$ under the gluing map. Hence the rational framing on $L_1$ provided by $D$ is $p(p-1)/p^2=(p-1)/p$.
Therefore  $tb_{\Q}(L_2)=(p-1)/p - 1 = -1/p$. 
Similar computation in case $p$ odd.

(type~$C'$) In this case we set off by assuming the existence of a real contact Heegaard decomposition of genus 1. The gluing map of the Heegaard decomposition is $\Phi$.
We observe that $\Phi((-q,p))=(1,0)$ so that $(-q,p)$ bounds an oriented 
horizontal disk $D_2$ in $H_2$. Set $g=gcd(q-1,p)$ and $h=gcd(q+1,p)$.
The fixed point set of the real structure is one or two copies of $K=\frac{1}{g}(q-1,-p)$ 
on $T=\partial H_1$ depending on $p$ being  odd or even respectively.
Moreover the equivariant dividing curve $\Gamma$
on $H_1$ is parallel to $\frac{1}{h}(q+1,-p)$ and $\#\Gamma=2$. 
Then we have $gK=\partial (D_2\cup D_1)$, with $D_1$ being the meridional disk in $H_1$. 
Let $S$ denote the surface obtained from $D_2\cup D_1$ by adding a band at each crossing of  $D_2\cap D_1$ connecting $D_2$ to $D_1$ so that the orientations are respected. We see that there are $(1,0)\cdot (-q,p)=p$ bands in $S$, which
is minus the difference $(T,S)$ between framings coming from $T$ and $S$ along $gK$ as well.
Now, the twisting of the contact framing along $K$ with respect to $T$
is $(K,T)= \frac{1}{2}(K\cdot \Gamma) = -2p/gh$. 
As a result we have
$$tb_{\Q}(K)=tb_{\Q}(K,S)= \frac{1}{g^2} (T,S) - (K,T) =p/g^2 - 2p/gh.$$

In particular when $q=p-1$ and $p$ is odd we have $h=p$, $g=1$  so that $tb_{\Q}(K)= p-2$.
For the case $q=p-1$ and $p$ is even we have $h=p$ and $g=2$ so that $tb_{\Q}(K)= p/4 - 1$.

(type~$C$) We assume the existence of a real contact Heegaard decomposition of genus 1. The gluing map is  $-\Phi$ this time.
Then $(q,-p)$ bounds $D_2$;
$K=\frac{1}{h}(q+1,-p)$ and 
$\Gamma$  is parallel to $\frac{1}{g}(q-1,-p)$.
Then we have $hK=\partial (D_2\cup D_1)$. As above $S$ is obtained from $D_2\cup D_1$ by adding a band at each crossing respecting the orientations
and there are $p$ bands again.
The twisting of the contact framing along $K$ with respect to $T$
remains the same: $(K,T)= -K\cdot \Gamma = -2p/gh$.  As a result we have
$$tb_{\Q}(K)= \frac{1}{h^2} (T,S) - (K,T) =p/h^2 - 2p/gh.$$

In particular when $q=1$ and $p$ is odd we have $tb_{\Q}(K)=    p-2$.
For the case $q=1$ and $p$ is even we have $tb_{\Q}(K)= p/4-1$.

In case $q=p-1$ and $p$ is odd we have $tb_{\Q}(K)= 1/p - 2$.
For the case $q=1$ and $p$ is even we have $tb_{\Q}(K)= 1/p - 1$. \\

There is a unique real tight $L(p,q)$ of type~$B$ for $q=1,p-1$. If $l^*_{C}(p,p-1)$ were positive (or if $l^*_{C'}(p,1)$ were positive) then the twisting computation would agree. Since they do not, except for $p=2$, we conclude 

\begin{theorem}
Suppose $p>2$, $q=1,p-1$. For a $B$-real tight $L(p,q)$, a real contact Heegaard decomposition must have genus greater than $1$; 
i.e. $l^*_{C'}(p,1)=0$ and $l^*_{C}(p,p-1)=0$. Equivalently, an invariant Heegaard torus in a $B$-real tight $L(p,q)$ cannot be made convex equivariantly.
\label{l*Cp-1}
\end{theorem}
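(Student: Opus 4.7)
The plan is to argue by contradiction, using the rational Thurston--Bennequin number $tb_{\Q}$ of the real Legendrian part as the distinguishing invariant. Along any component of the fixed point set of $c$, the contact plane is anti-invariant and the tangent line is real, which forces the plane to contain that tangent line; hence each component of the real part is an equivariant Legendrian knot, and its $tb_{\Q}$ is well-defined and invariant under equivariant contact isotopy, and in particular under isotopy through real tight structures.

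Suppose toward contradiction that $l^*_C(p,p-1) > 0$ for some $p>2$, and fix a $C$-real tight $L(p,p-1)$ realized by a genus-$1$ real contact Heegaard decomposition. By the equivalences of real-structure types recalled at the start of Section~\ref{AveB}, the underlying type-$C$ real structure is isotopic through real structures to one of type~$B$. So the given tight contact structure is equivariantly contact isotopic to a $B$-real tight contact structure on $L(p,p-1)$, and Theorem~\ref{hudutB}(b) says the latter is unique. Consequently the value of $tb_{\Q}$ on the real part, computed in the two presentations, must agree.

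But the calculations of Section~\ref{buk} give $tb_{\Q} = -1/p$ from the $B$-realization on $L(p,p-1)$, while from a hypothetical $C$-type genus-$1$ realization the same section yields $tb_{\Q} = 1/p - 2$ for odd $p$ and $tb_{\Q} = 1/p - 1$ for even $p$. A short arithmetic check shows these never coincide with $-1/p$ for $p>2$ (equality forces $p=1$ or $p=2$ respectively), giving the required contradiction, so $l^*_C(p,p-1)=0$. The proof that $l^*_{C'}(p,1)=0$ is entirely parallel: the $B$-realization's $tb_{\Q}$ is computed by the same homological recipe (now applied with the gluing matrix of $L(p,1)$ and the boundary slopes $s_1=s_2=\infty$ supplied by Lemma~\ref{typeB}) and compared with the values $p-2$ for odd $p$ and $p/4-1$ for even $p$ from the $C'$-realization; these again disagree for $p>2$.

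The geometric reformulation — that an invariant Heegaard torus in a $B$-real tight $L(p,q)$ cannot be made convex equivariantly — is an immediate corollary, since an equivariant convex perturbation would upgrade such a torus to a genus-$1$ real contact Heegaard decomposition of the only admissible type ($C$ for $q=p-1$, $C'$ for $q=1$), contradicting the vanishing just established. The step requiring the most care is the $B$-side $tb_{\Q}$ computation for $q=1$, which is not spelled out in Section~\ref{buk}; however, the bounding-disk and band-counting argument used there for $q=p-1$ transcribes mechanically once the $L(p,1)$ gluing matrix is substituted, so this is bookkeeping rather than a genuine obstacle.
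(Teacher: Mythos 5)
Your strategy is exactly the paper's: the proof of Theorem~\ref{l*Cp-1} consists precisely of comparing the rational Thurston--Bennequin numbers of the real parts computed in Section~\ref{buk} for the type-$B$ realization and for a hypothetical genus-$1$ type-$C$ (resp.\ $C'$) realization, and invoking the uniqueness of the $B$-real tight structure to force agreement. Your treatment of $l^*_C(p,p-1)$ is correct: $-1/p$ against $1/p-2$ ($p$ odd) or $1/p-1$ ($p$ even), which differ for all $p>2$.

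In the $q=1$ case, however, you compare against the wrong numbers. The values $p-2$ and $p/4-1$ that you quote ``from the $C'$-realization'' are the type-$C'$ values at $q=p-1$ (equivalently, the type-$C$ values at $q=1$), not the type-$C'$ values at $q=1$. What the argument requires is the specialization of the general formula $tb_{\Q}(K)=p/g^2-2p/(gh)$, $g=\gcd(q-1,p)$, $h=\gcd(q+1,p)$, at $q=1$: this gives $g=p$, $h=\gcd(2,p)$, hence $tb_{\Q}(K)=1/p-2$ for $p$ odd and $1/p-1$ for $p$ even. Compared with the $B$-side value $-1/p$ (which, as you rightly note, is not spelled out in Section~\ref{buk} for $q=1$ and must be extracted by redoing the bounding-disk computation with slopes $s=s'=\infty$; it does come out to $-1/p$, consistently with the explicit $\R P^3$ computation), equality again forces $p=1$ or $p=2$, so the theorem still follows. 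The numerical conclusion is therefore unaffected, but as written your comparison in the $q=1$ case is between invariants of two different decompositions, and you should replace it by the correct specialization. The remainder of the argument, including the reduction of the ``equivalently'' clause to the vanishing of $l^*$, is fine and matches the paper.
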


In the case of type~$A$ 
we content ourselves with the following result:

\begin{prop} There is no real contact Heegaard decomposition of genus $1$ for a $A$-real tight $L(p,1)$ induced from a real open book decomposition. 
\end{prop}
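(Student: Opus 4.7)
The plan is to suppose, for contradiction, that $(S,f,c)$ is a real open book decomposition of an $A$-real tight $L(p,1)$ (with $p>2$) whose induced Heegaard decomposition has genus $1$, and derive a contradiction via a rational twisting-number computation.

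Since the genus of an open-book Heegaard surface is $2g(S)+|\partial S|-1$, setting this equal to $1$ with $|\partial S|\geq 1$ forces $g(S)=0$ and $|\partial S|=2$: the page $S$ must be an annulus. Its mapping class group rel.\ boundary is $\mathbb{Z}$ generated by the Dehn twist $\tau$ about the core, so the monodromy is $f=\tau^n$, and the open-book manifold is $L(p,1)$ only if $|n|=p$. The real open book relation $f\circ c=c\circ f^{-1}$ reduces to $c\tau c^{-1}=\tau^{-1}$, which forces $c$ to be an orientation-reversing involution of the annulus. Up to isotopy through such involutions, writing points of the annulus as $(\theta,s)\in S^1\times[-1,1]$, the three possibilities are the reflection $c_I\colon(\theta,s)\mapsto(-\theta,s)$, the flip $c_{II}\colon(\theta,s)\mapsto(\theta,-s)$ (swapping the two boundary components), and the composition $c_V\colon(\theta,s)\mapsto(\theta+\pi,-s)$.

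The real structure induced on the 3-manifold acts as $(x,t)\mapsto(c(x),1-t)$ on the mapping torus, extended equivariantly across the binding. Since the involution $t\mapsto 1-t$ exchanges the halves $S\times[0,\tfrac12]$ and $S\times[\tfrac12,1]$, which (together with their shares of the binding neighbourhoods) form the two solid-torus handlebodies of the induced genus-$1$ Heegaard decomposition, the real structure swaps these handlebodies. Hence the induced Heegaard decomposition is of type~$C$ or type~$C'$ in the notation of \cite{hr}. Since for $p>2$ one has $A\sim C$ and $B\sim B'\sim C'$ as abstract real structures, and these two equivalence classes are disjoint, the desired contradiction reduces to showing that the induced type is~$C'$ (equivalently $B$), not type~$C$ (equivalently $A$).

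The case $c_V$ produces an empty real part, incompatible with the nonempty real part of an $A$-real $L(p,1)$. For $c_{II}$, the two binding components are swapped and the real part consists of the two core circles of the pages, lying on the Heegaard torus. For $c_I$, each binding component is preserved, with induced involution $\theta\mapsto-\theta$ fixing two points on each; the real part consists of two circles obtained by concatenating the fixed arcs on pages with the radial arcs in the binding neighbourhoods. In each of the $c_I$ and $c_{II}$ cases one writes the real Legendrian circles in standard $(\mu,\lambda)$-coordinates on the induced Heegaard torus, identifies the Seifert-type rational framing coming from the meridional disk in the handlebody, and computes the rational Thurston--Bennequin number against the contact framing exactly as in Section~\ref{buk}. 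The resulting values match the type-$C'$ formulas for $q=1$ derived there (namely $1/p-2$ for $p$ odd and $1/p-1$ for $p$ even), not the type-$C$ formulas ($p-2$ and $p/4-1$ respectively). Therefore the induced real structure is $B$-type rather than $A$-type, contradicting the hypothesis.

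The main obstacle is this last $tb_\Q$ calculation for $c_I$, where the real circle genuinely traverses both pages and both binding neighbourhoods and the bookkeeping of the framing is most delicate; once this matches the $C'$ value, the mismatch with the $C$ (equivalently $A$) value completes the argument.
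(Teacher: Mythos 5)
Your reduction to an annular page with monodromy $f=\tau^{n}$, $|n|=p$, is exactly how the paper's proof begins, but from that point the paper finishes in one line and your continuation has a genuine gap. The short route you miss: for $p>2$ the oriented manifolds $L(p,1)$ and $L(p,p-1)$ are distinct, and the annular open book with monodromy $t_a^{+p}$ supports $L(p,p-1)$, so to obtain $L(p,1)$ the monodromy must be $t_a^{-p}$; but the contact structure supported by an annular open book with negative monodromy is overtwisted, contradicting tightness. No analysis of the involution on the page, of the induced Heegaard gluing, or of the real part is needed.

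Your alternative ending is not complete as written. The entire contradiction is made to rest on showing that the induced genus-$1$ decomposition is of type $C'$ rather than $C$, and the tool you propose for this --- computing $tb_{\Q}$ of the real part ``exactly as in Section~\ref{buk}'' and matching it against the type-$C$ versus type-$C'$ formulas --- is both left undone (you yourself flag the $c_I$ case as the main obstacle) and methodologically shaky: the formulas of Section~\ref{buk} are derived under the hypothesis that the Heegaard torus is equivariantly convex with $\#\Gamma=2$, which is not known for the torus coming from an open book (indeed Theorem~\ref{l*Cp-1} shows precisely that such equivariant convexity can fail), so a mismatch with the type-$C$ formula would not rule out type $C$. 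Moreover the dichotomy itself is the wrong target: whether the gluing involution is $+\Phi$ or $-\Phi$ is a purely topological question determined by the chosen involution on the annulus, and nothing in your argument excludes the type-$C$ outcome, in which case (since $A\sim C$ for $L(p,1)$) you obtain no contradiction at all. The actual obstruction is overtwistedness of the supported structure, which your argument never engages.
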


\begin{proof} Suppose that there is such. Then 
the real open book has annular pages and its monodromy is ${t_a}^{\pm p}$, where $t_a$ is a right handed Dehn twist about the core circle $a$ in the annulus. The real open book supports a real $L(p, 1)$ if and only if the monodromy is ${t_a}^{- p}$. However in this case the contact structure is overtwisted.
\end{proof}

\section{Construction for $q=p-1$ via surface singularities}
\label{q=p-1}

Here we show that all 
the real tight lens spaces given by surgery diagrams for $L(p,p-1)$ in the first row of Figure~\ref{tumtablo} are real contact link manifolds of isolated real algebraic surface singularities. Meanwhile note that all of the 
surgery diagrams for $L(p,1)$ in the second row of Figure~\ref{tumtablo} 
is virtually overtwisted except two of them. Since universally tightness is 
necessary for Milnor fillability  \cite{lo}, these virtually overtwisted ones are not link manifolds of isolated real algebraic surface singularities.

We consider the family of surface singularities
$A^{\pm}_{p-1}=\{\pm x^{p} - y^2 +  z^2=0\}\subset \C^3$ and
the link manifolds $X^{\pm}=A^{\pm}_{p-1}\cap S^5_{\epsilon}$. We supply these manifolds  with the complex conjugation  as the real structure and consider the induced real tight structure $c^{\pm}$ on $X^{\pm}$.
Then $X^{\pm}$ is a real tight lens space $L(p,p-1)$.
It is straightforward to see that  for each $p$ these two contact manifolds are ambiently contact isotopic and as real manifolds they are equivariantly diffeomorphic. 
However these manifolds turn out to be distinct up to equivariant contact {\em isotopy}.

Using the techniques in \cite{oz} we will distinguish between these real tight lens spaces. 
In particular we will match $tb_{\Q}$ of  the real parts of $X^{\alpha,\beta,\gamma}$ with those computed in Section~\ref{buk}.

\begin{figure}[b]
\begin{center}
\includegraphics[width=6cm]{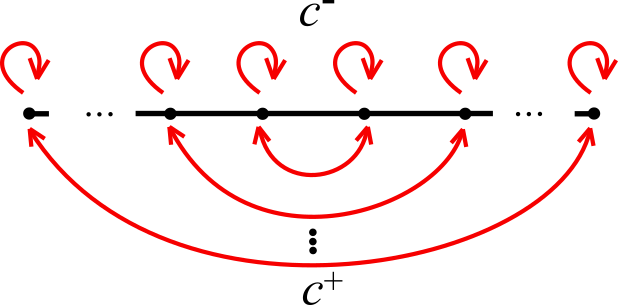}
\caption{The two real structures on the link of $A_{p-1}$, $p$ odd.}
\label{ApGraf}
\end{center}
\end{figure}

We consider the minimal resolution (dual) graph $G$ of $A^{\pm}_{p-1}$; it is a linear graph of 
length $(p-1)$ with  $(-2)$-vertices (see Figure~\ref{ApGraf}).
Any involution on $G$ describes a real structure on the plumbed 4-manifold hence on the lens space;
conversely any real structure on $A^{\pm}_{p-1}$ 
induces an involution on $G$. One way to construct $G$ is to consider the resolution of 
$A^{\pm}_{p-1}$ as a double covering branched along a resolved curve.
Inspecting the positive and negative sides of the real part of the resolved curve in appropriate charts, as 
in \cite[Proposition~1]{oz}, we deduce that the induced involution of the real structure $c^{-}$ on $G$ 
fixes all vertices in $G$ and that induced by $c^+$ acts as a symmetry with respect to the midpoint (see Figure~\ref{ApGraf})

Having said these, let $p=2k+1$.
The fixed point set $S^{\pm}=fix(c^{\pm})$ in the resolved complex surface is a oriented real surface.  
The real part  $L^{\pm}=X^{\pm}\cap S^{\pm}$ of $X^{\pm}$ is a connected Legendrian knot. The genera of $S^-$ and $S^+$ are $(p-1)/2$ and 0 respectively. Now the calculation for $L^-$ is easy:
$$ tb_{\Q}(L^-)=-\chi(S^-)=p-1-2+1=p-2.$$

\begin{figure}[t]
\begin{center}
\includegraphics[width=5cm]{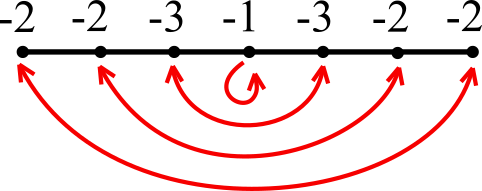}
\caption{Blown up  $A_6$.}
\label{blow}
\end{center}
\end{figure}
For $L^+$, we blow up the resolved complex surface once more.
One has to look more closely to the point of blowing-up and the equivariant setting. 
We blow up at the point of intersection of the equators of the equivariant pair of  exceptional divisors in the very middle. The complex conjugation maps those equators onto each other without fixing any point. 
Thus the conjugation extends over the new exceptional divisor as a $\pi$-rotation 
with respect to the equatorial plane so that its equator (which intersects each of the previous equators at a single point) is equivariant without  any fixed point. So we obtain  a real variety $\tilde{A}$ with the resolution graph as in Figure~\ref{blow}.  
Now the real surface $\tilde{S}^+\subset \tilde{A}$ 
is a punctured $\R P^2$ with boundary $L^+$. The real circle $E$ on the $(-1)$-sphere 
is 1-sided in $\tilde{S}^+$ in and bounds a disk in $X^+$. The contribution of this disk to
$tb_{\Q}(L^+)$ is easy to determine using \cite[Proposition~9 and Theorem~1]{oz}. Thus we get
\begin{align*}
tb_{\Q}(L^+) &=-\chi(S^+ - E) + (-1+2 \cdot [-3,\stackrel{k}{\widehat{-2,\ldots,-2}}]^{-1}) \\
& = 0 -1 + 2(-3-\frac{k-1}{-k})^{-1} =   -\frac{1}{2k+1} = -1/p.
\end{align*}

\begin{figure}[b]
\begin{center}
\includegraphics[width=7cm]{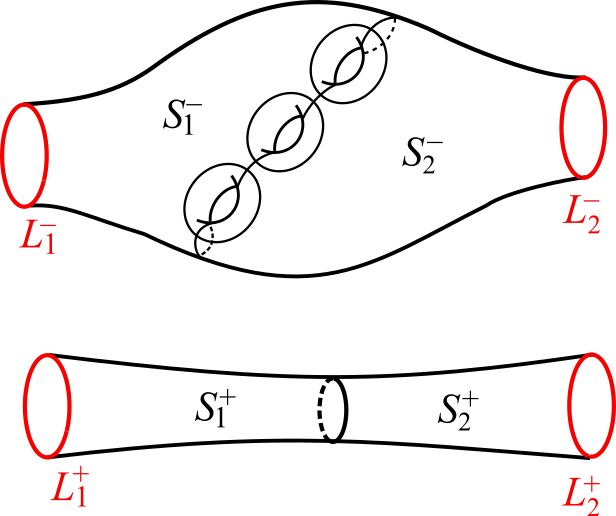}
\caption{Real parts $L_i^{\pm}$ bound the real surfaces in $\tilde{A}_{p-1}$. For $p=2k$, the former has genus $k$ and the latter has genus $0$.}
\label{ApTB}
\end{center}
\end{figure}

Now let $p=2k$. 
The real structure $c^{-}$ fixes all vertices in  the minimal resolution graph; $c^{+}$ is the mirror symmetry with respect to the middle vertex. The real part of $X^{\pm}$ consists of two circles. Let us denote an arbitrarily chosen one by $L^{\pm}$ (the calculation below applies to both components).
The genus of $S^-$ is $k-1$ while $S^+$ is an annulus. Following the idea in \cite[Section~5.1]{oz}
and using Figure~\ref{ApTB}  we can compute the rational twistings  as follows
$$ tb_{\Q}(L^-)=-\chi(S^-_1)-k\cdot \frac{1}{2}=k-1-\frac{k}{2}=\frac{k}{2}-1=\frac{p}{4}-1$$
and $$tb_{\Q}(L^+)=-\chi(S^+_1) + \frac{1}{4}(- 2+2\cdot\frac{k-1}{k})=0- \frac{1}{2k}=-\frac{1}{p}.$$ 

We note that each rational Thurston-Bennequin number that we have obtained above is maximal for the corresponding knot type. For this we refer, for example, \cite[Theorem~1.3]{ch}.

\begin{remark}
\label{blow2}
We would have obtained another real tight $L(p,p-1)$ if we had extended the real structure over the middle exceptional sphere in a non-canonical manner, namely as a $\pi$-rotation in a way that its equator has exactly two fixed points.
This would give a different equivariant contact surgery diagram.
\end{remark}

Comparing with the surgery diagrams obtained and with the values of $tb_{\Q}$ in Section~\ref{buk} and taking into account this last remark,  we conclude

\begin{theorem} 
\label{eksEn}
The unique real tight lens space of type~$B$ in case $q=p-1$ is exactly the link manifold $A^+_{p-1}$ of the singularity $\{+x^{p} - y^2+z^2=0\}$.
The link manifold $A^-_{p-1}$ of 
the singularity $\{-x^{p} - y^2 + z^2=0\}$ is a real tight $L(p,p-1)$ of type~$A$,
which is depicted in the upper left diagram in Figure~\ref{tumtablo}. The diagram in  Figure~\ref{tabloremark} is obtained from $A^+_{p-1}$ by simply modifying the real structure in the middle 2-handle.
\end{theorem}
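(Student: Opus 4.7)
The plan is to use the rational Thurston-Bennequin numbers computed in Section~\ref{buk} as an equivariant contact isotopy invariant, combined with the uniqueness result $l_B(p,p-1)=1$ from Theorem~\ref{hudutB}(b), to match the two singularities $A^\pm_{p-1}$ with the correct real tight types. On $L(p,p-1)$ only types $A$ and $B$ occur, and they are distinguished by $tb_{\Q}$ of the real Legendrian part: type $B$ gives $-1/p$, whereas the type-$C'$ computation (which coincides with type $A$ for $q=p-1$) gives $p-2$ for $p$ odd and $p/4-1$ for $p$ even.

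First I would observe that the $tb_{\Q}$ computed in the preceding pages for $A^+_{p-1}$ is $-1/p$ in both parities, matching only the type-$B$ value; hence the induced real tight structure on $A^+_{p-1}$ is of type $B$, and by $l_B(p,p-1)=1$ it is the unique such structure. For $A^-_{p-1}$ the same bookkeeping gives $tb_{\Q}(L^-)=p-2$ or $p/4-1$, which agrees with the type-$A$ (equivalently type-$C'$) values and rules out type $B$, so $A^-_{p-1}$ is a type-$A$ real tight $L(p,p-1)$. To identify it with the surgery diagram in the upper left of Figure~\ref{tumtablo}, I would read off the equivariant Kirby picture directly from the minimal resolution: the real structure $c^-$ fixes every vertex of the linear $(-2)$-chain, so each exceptional sphere blows down to a $c_1$-equivariant Legendrian unknot, producing the claimed chain of contact $(-1)$-surgeries; the $tb_{\Q}$ match then confirms the identification.

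For the last assertion I would invoke Remark~\ref{blow2}: keeping the complex surface $A^+_{p-1}$ the same but extending the anti-holomorphic involution over the central exceptional $(-2)$-sphere as a $\pi$-rotation whose equator meets the fixed set in two points (rather than being an equivariant circle disjoint from the fixed set, as for $c^+$) changes the action only on that sphere. Reading off the equivariant Kirby picture as in the previous step, the 2-handle corresponding to the middle vertex now has its real structure altered while the rest of the diagram is unchanged, which is precisely the modification producing Figure~\ref{tabloremark}; this is consistent with that diagram giving a type-$A$ structure for $p$ even as observed in the remark following Figure~\ref{tabloremark}.

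The main obstacle is the explicit translation between the equivariant plumbing and the equivariant Kirby/surgery diagram: one must verify that each vertex-fixing action of $c^-$ on a $(-2)$-sphere indeed blows down to a $c_1$-equivariant Legendrian unknot with the correct twisting and sign decoration, and similarly that the $\pi$-rotation extension on the central sphere in the $c^+$ picture produces exactly the 2-handle modification shown in Figure~\ref{tabloremark}. The $tb_{\Q}$ invariants, already matched above, serve as a sanity check for the identification, but the diagram-level bookkeeping is the delicate part.
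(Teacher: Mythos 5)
Your proposal follows the paper's own argument: the paper likewise concludes Theorem~\ref{eksEn} by comparing the values $tb_{\Q}(L^+)=-1/p$ and $tb_{\Q}(L^-)=p-2$ (resp.\ $p/4-1$) computed from the resolution graphs of $A^{\pm}_{p-1}$ with the type-$B$ and type-$C'$ values of Section~\ref{buk}, invoking the uniqueness $l_B(p,p-1)=1$ and Remark~\ref{blow2} for the final assertion. The only place you go beyond the paper is in spelling out the equivariant Kirby-calculus translation that identifies $A^-_{p-1}$ with the upper-left diagram of Figure~\ref{tumtablo} --- a step the paper leaves implicit in the phrase ``comparing with the surgery diagrams obtained'' --- and your flagging of that bookkeeping as the delicate part is apt.
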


Acknowledgment. The research for this paper was started during a \textit{Research in Pairs} stay at IMBM ({\.I}stanbul Center for Mathematical Sciences) in January 2020. We thank IMBM for its support, and for creating an inspiring environment. SO was partially supported
by T\"UB{\.I}TAK grant 119F411. F\"O thanks Nermin Salepci for the fruitful  
discussions that took place during his visit at Institut Camille Jordan, Universit\'e Lyon I.

\end{document}